\pdfoutput=1
\documentclass[12pt, reqno]{amsart}
\usepackage[utf8]{inputenc}
\usepackage[english]{babel}
\usepackage{braket}
\usepackage{amsmath}
\usepackage{amstext}
\usepackage{amssymb}
\usepackage{amsthm}
\usepackage{mathtools}
\usepackage{stmaryrd}
\usepackage{mathrsfs}
\usepackage{extarrows}
\usepackage{afterpage}
\usepackage{tabu}
\usepackage{array}
\usepackage{microtype}
\usepackage{graphicx}
\usepackage{amsfonts}
\usepackage{units}
\usepackage[shortlabels]{enumitem}
\usepackage{url}
\usepackage{bm}
\usepackage{soul}
\usepackage{tikz}
\usetikzlibrary{matrix,arrows,calc,fit,cd,positioning,intersections,arrows.meta}
\usepackage{hyperref}
\hypersetup{
	colorlinks = true,
	linkcolor = {blue},
	urlcolor = {blue},
	citecolor = {blue}
}
\usepackage{cleveref}
\usepackage{colortbl}

\newtheorem{theorem}{Theorem}[section]
\newtheorem{lemma}[theorem]{Lemma}

\newtheorem{corollary}[theorem]{Corollary}

\theoremstyle{definition}
\newtheorem{definition}[theorem]{Definition}
\newtheorem{remark}[theorem]{Remark}
\newtheorem{example}[theorem]{Example}

\newcommand{\N}{\mathbb{N}}
\newcommand{\Z}{\mathbb{Z}}
\newcommand{\R}{\mathbb{R}}

\newcommand{\Q}{\mathbb{Q}}

\newcommand{\F}{\mathbb{F}}
\renewcommand{\H}{\mathbb{H}}
\renewcommand{\P}{\mathbb{P}}

\newcommand{\W}{\mathcal{W}}

\DeclareMathOperator{\id}{id}

\DeclareMathOperator{\im}{im}

\newcommand{\<}{\langle}
\renewcommand{\>}{\rangle}

\DeclareMathOperator{\Mov}{\textsc{Mov}}
\DeclareMathOperator{\Fix}{\textsc{Fix}}

\DeclareMathOperator{\GL}{GL}

\newenvironment{mymatrix}[1]{
    \left(
    
    \begin{array}{#1}
}
{
    \end{array}
    \right)
}

\allowdisplaybreaks %

\title[Factoring isometries of quadratic spaces]{Factoring isometries of
  quadratic spaces into reflections}
\date{\today}
\author{Jon McCammond}
\author{Giovanni Paolini}

\begin{document}

\begin{abstract}
  Let $V$ be a vector space endowed with a non-degenerate quadratic
  form $Q$.  If the base field $\F$ is different from $\F_2$, it is
  known that every isometry can be written as a product of
  reflections.  In this article, we detail the structure of the poset
  of all minimal length reflection factorizations of an isometry.  If
  $\F$ is an ordered field, we also study
  factorizations into positive reflections, i.e., reflections defined
  by vectors of positive norm.  We characterize such factorizations,
  under the hypothesis that the squares of $\F$ are dense in the
  positive elements (this includes Archimedean and Euclidean fields).
  In particular, we show that an isometry is a
  product of positive reflections if and only if its spinor norm is
  positive.  As a final application, we explicitly describe the poset
  of all factorizations of isometries of the hyperbolic space.
\end{abstract}

\maketitle

Let $V$ be a finite-dimensional vector space over a field $\F$.  A
quadratic form on $V$ is a map $Q \colon V \to \F$ such that: (1) $Q(av) =
a^2 Q(v)$ for all $a \in \F$ and $v \in V$; (2) the polar form $\beta(u,v)
= Q(u+v) - Q(u) - Q(v)$ is bilinear.  When $\F$ is different from the
two-element field $\F_2$, every isometry of a non-degenerate quadratic
space $(V, Q)$ can be written as a product of at most $\dim V$
reflections and the minimal length of a reflection factorization is
determined by geometric attributes of the isometry
\cite{cartan1938theorie, dieudonne1948groupes,
  scherk1950decomposition, dieudonne1955generateurs,
  callan1976generation, taylor1992geometry}.  For some applications,
e.g.\ when studying dual Coxeter systems and the associated Artin
groups \cite{bessis2003dual, brady2008non, mccammond2015dual,
  mccammond2017artin, paolini2019proof}, more fine-grained information
is useful: What is the set of all minimal length reflection
factorizations? What is the combinatorial structure of the intervals
in the orthogonal group $O(V, Q)$, with respect to the metric defined
by the reflection length?  Answers to these questions have been given
for anisotropic quadratic spaces \cite{brady2002partial} and for
(affine) Euclidean spaces \cite{brady2015factoring}.  In the first
part of this paper, we give answers for general quadratic spaces.  Our
treatment is based on Wall's parametrization of the orthogonal group
\cite{wall1959structure, wall1963conjugacy}, which we recall in \Cref{sec:wall-parametrization}.

In the second part of this paper, we turn our attention to the case
where $\F$ is an ordered field.  We say that a reflection with respect
to some vector $v \in V$ is positive if $Q(v)$ is positive.  One can
ask all the previous questions while restricting to factorizations
into positive reflections only.  The following are our main
motivations for studying this problem: (1) understand reflection
factorizations in Coxeter groups (which are discrete groups generated
by positive reflections with respect to some quadratic form in
$\R^n$); (2) describe reflection factorizations of isometries of the
hyperbolic space $\H^n$.  We characterize the positive reflection
length of all isometries, and we describe the minimal factorizations,
under the hypothesis that $\F$ is \emph{square-dense}: the squares of
$\F$ are dense in the set of positive elements.  Most notably, the class of square-dense
ordered fields includes all Archimedean fields (i.e., the subfields of
$\R$) and Euclidean fields (i.e., fields where every positive
element is a square).  In particular, we show that an isometry can be
written as a product of positive reflections if and only if its spinor
norm is positive.

As an application, we study the reflection factorizations of
isometries of the hyperbolic space $\H^n$.  In the hyperboloid model
$\H^n \subseteq \R^{n+1}$, the isometries of $\H^n$ form an index-two
subgroup of the orthogonal group $O(\R^{n+1}, Q)$, where $Q$ is a
quadratic form of signature $(n, 1)$.  In fact, they are precisely the
isometries of $(\R^{n+1}, Q)$ with a positive spinor norm.  This
observation allows us to give an explicit description of minimal
reflection factorizations and intervals in $O(\H^n)$.

\section{Wall's parametrization of the orthogonal group}
\label{sec:wall-parametrization}

In this section, we recall Wall's parametrization of the orthogonal
group of a quadratic space, which was first introduced in
\cite{wall1959structure}.  To be as self-contained as
possible, we give proofs for the most important results.
We largely follow the treatment of \cite[Chapter 11]{taylor1992geometry}, but the
reader can also refer to \cite{wall1959structure, wall1963conjugacy,
  hahn1979unipotent}.

Let $V$ be a finite-dimensional vector space over a field $\F$.  For
now, no hypothesis on $\F$ is required.  A \emph{quadratic form} on
$V$ is a map $Q \colon V \to \F$ such that:
\begin{enumerate}
    \item $Q(av) = a^2 Q(v)$ for all $a \in \F$ and $v \in V$;
    \item the map $\beta(u,v) = Q(u+v) - Q(u) - Q(v)$ is bilinear.
\end{enumerate}
The pair $(V, Q)$ is called a \emph{quadratic space}, and the symmetric bilinear form $\beta$ is called the \emph{polar form} of
$Q$.
From now on, assume that $(V, Q)$ is a \emph{non-degenerate} quadratic space, i.e., the polar form
$\beta$ is non-degenerate: $\beta(u, v) = 0$ for all $v \in V$ implies
$u = 0$.

If the characteristic of $\F$ is not $2$, the polar form $\beta$
determines $Q$ via the relation $Q(u) = \frac12 \beta(u, u)$.  On the
other hand, if the characteristic of $\F$ is $2$, $\beta$ is
alternating (i.e., $\beta(u, u) = 0$ for all $u \in V$) and does not
determine $Q$.

A non-zero vector $u \in V$ is \emph{isotropic} if
$\beta(u, u) = 0$ and it is \emph{singular} if $Q(u) = 0$.
These two notions coincide when the characteristic of $\F$ is not $2$.
Given a linear subspace $W \subseteq V$, its orthogonal subspace is
defined as $W^\perp = \{ v \in V \mid \beta(v, w) = 0 \text{ for all }
w \in W \}$.
A subspace $W \subseteq V$ is \emph{totally singular} if $Q(u) = 0$ for
all $u \in W$, and it is \emph{non-degenerate} if $W \cap W^\perp =
\{0\}$ (i.e., if $\beta|_W$ is non-degenerate).  Since $\beta$ is
non-degenerate, we have that $\dim(W) + \dim(W^\perp) = \dim(V)$ and
$(W^\perp)^\perp = W$ for every subspace $W \subseteq V$.  However,
note that $W \cap W^\perp$ might be non-trivial, so $V$ is not
necessarily the direct sum of $W$ and $W^\perp$.  If $V = W_1 \oplus
W_2$ and $W_1 = W_2^\perp$, we also write $V = W_1 \perp W_2$.

\begin{definition}[Orthogonal group]
  The \emph{orthogonal group} of $(V, Q)$ is \[ O(V, Q) = \{ f \in
  \GL(V) \mid Q(f(u)) = Q(u) \, \text{ for all $u \in V$} \}. \] The
  elements of the orthogonal group are called \emph{isometries}.  We
  also write $O(V)$ in place of $O(V, Q)$, since the ambient quadratic
  form $Q$ is always fixed.
\end{definition}

By definition, an isometry $f \in O(V)$ also preserves the polar form
$\beta$:
\begin{align*}
  \beta(f(u), f(v)) &= Q(f(u) + f(v)) - Q(f(u)) - Q(f(v)) \\
  &= Q(f(u+v)) - Q(f(u)) - Q(f(v)) \\
  &= Q(u+v) - Q(u) - Q(v) \\
  &= \beta(u, v).
\end{align*}
Notice that if $f \colon V \to V$ is a linear map that preserves $\beta$, then $f \in \GL(V)$ because $\beta$ is non-degenerate.

Our aim is to characterize the factorizations of isometries as
products of reflections.  A \emph{reflection} is a non-trivial
isometry that fixes every vector in a hyperplane of $V$.  Every
reflection can be written as
\begin{equation}
r_v(u) = u - \frac{\beta(u, v)}{Q(v)} v
\label{eq:reflection}
\end{equation}
for some non-singular vector $v \in V$, and $r_v$ is called the
reflection with respect to $v$.  Note that $r_v = r_{w}$ for every
non-zero scalar multiple $w$ of $v$.  In addition, $r_v$ fixes the
hyperplane $\<v\>^\perp$, sends $v$ to $-v$, has order $2$ and
determinant $-1$.  The set of reflections is closed under conjugation:
$f r_v f^{-1} = r_{f(v)}$ for every $f \in O(V)$.

The following are two important subspaces associated with an isometry.

\begin{definition}
  Given an isometry $f \in O(V)$, its \emph{fixed space} is $\Fix(f) =
  \ker(\id - f)$ and its \emph{moved space} is $\Mov(f) = \im(\id -
  f)$.
\end{definition}

The fixed space is simply the subspace of vectors that are fixed by
$f$. The moved space is the subspace of ``movement'' vectors $f(u)-u$,
for $u \in V$.  It is also called the \emph{residual space} of $f$.
The notation ``$\Fix(f)$'' and ``$\Mov(f)$'' is the one used in
\cite{brady2015factoring}, but several different notations for the
moved space have appeared in the literature, including $V_f$, $[V, f]$, and $M(f)$
\cite{wall1959structure, wall1963conjugacy,taylor1992geometry,brady2002partial}.

\begin{lemma}\label{lemma:fix-move-orthogonal}
  For every isometry $f \in O(V)$, we have that $\Fix(f) =
  \Mov(f)^\perp$.
\end{lemma}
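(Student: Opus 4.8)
The plan is to show the two inclusions $\Mov(f)^\perp \subseteq \Fix(f)$ and $\Fix(f) \subseteq \Mov(f)^\perp$ separately, using only the fact that $f$ preserves the polar form $\beta$ and that $\beta$ is non-degenerate. The crucial identity is that for any $u \in V$ and any $w \in V$, one has
\[
  \beta(u, (\id - f)(w)) = \beta(u, w) - \beta(u, f(w)) = \beta(u, w) - \beta(f(u), f(f^{-1}(w))) \cdot \text{(rewrite)},
\]
but more cleanly: applying $f$-invariance of $\beta$ in the form $\beta(u, f(w)) = \beta(f^{-1}(u), w)$, we get $\beta(u, (\id - f)(w)) = \beta((\id - f^{-1})(u), w)$. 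So $u \perp \Mov(f)$ if and only if $(\id - f^{-1})(u) \perp V$, i.e., if and only if $(\id - f^{-1})(u) = 0$ by non-degeneracy, i.e., if and only if $f^{-1}(u) = u$, equivalently $f(u) = u$, equivalently $u \in \Fix(f)$. This chain of equivalences is really the whole proof.

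First I would record the substitution $\beta(u, f(w)) = \beta(f^{-1}(u), w)$, which follows immediately from $\beta(f(a), f(b)) = \beta(a,b)$ by setting $a = f^{-1}(u)$, $b = w$. Then I would compute, for arbitrary $u, w \in V$,
\[
  \beta\bigl(u, (\id - f)(w)\bigr) = \beta(u, w) - \beta(u, f(w)) = \beta(u, w) - \beta(f^{-1}(u), w) = \beta\bigl((\id - f^{-1})(u), w\bigr).
\]
Next, $u \in \Mov(f)^\perp$ means the left-hand side vanishes for all $w \in V$, which by the displayed equality means $\beta((\id - f^{-1})(u), w) = 0$ for all $w$, which by non-degeneracy of $\beta$ means $(\id - f^{-1})(u) = 0$, i.e., $f^{-1}(u) = u$. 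Finally, observe that $f^{-1}(u) = u$ is equivalent to $u = f(u)$ (apply $f$ to both sides, or $f^{-1}$), i.e., $u \in \Fix(f)$. Reading the equivalences forward gives $\Mov(f)^\perp \subseteq \Fix(f)$ and reading them backward gives the reverse inclusion, so $\Fix(f) = \Mov(f)^\perp$.

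I do not anticipate a genuine obstacle here: the only subtlety is being careful that the natural computation produces $\id - f^{-1}$ rather than $\id - f$, so one must note that $\Fix(f) = \Fix(f^{-1})$ (equivalently $\ker(\id - f) = \ker(\id - f^{-1})$) to close the loop cleanly. Alternatively, one can avoid invoking $f^{-1}$ altogether by using the other direction of invariance, $\beta(f(u), w) = \beta(u, f^{-1}(w))$, but either way the same bookkeeping appears. It is worth remarking that $\dim \Fix(f) + \dim \Mov(f) = \dim \Fix(f) + (\dim V - \dim \Fix(f)) = \dim V$ is automatic from rank-nullity applied to $\id - f$, which is consistent with $\Fix(f) = \Mov(f)^\perp$ but is not needed for the proof; the equality of subspaces is the stronger statement and follows directly from the computation above.
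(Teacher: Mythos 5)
Your proof is correct, and it takes a slightly different and arguably cleaner route than the paper's. The paper proves only the inclusion $\Fix(f) \subseteq \Mov(f)^\perp$ by the direct computation $\beta(u, w - f(w)) = \beta(u,w) - \beta(f(u), f(w)) = 0$ for $u \in \Fix(f)$, and then finishes by observing that $\Fix(f)$ and $\Mov(f)$ have complementary dimensions (rank--nullity for $\id - f$, together with $\dim W + \dim W^\perp = \dim V$), so equality follows. You instead use the adjoint identity $\beta(u, f(w)) = \beta(f^{-1}(u), w)$ to convert $u \perp \Mov(f)$ into $(\id - f^{-1})(u) \perp V$, and then invoke non-degeneracy once to obtain a genuine chain of equivalences: $u \in \Mov(f)^\perp \iff (\id - f^{-1})(u) = 0 \iff f(u) = u$. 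This gives both inclusions at once and dispenses with the dimension count entirely, as you correctly note at the end. The paper's route trades a second inclusion for a dimension argument that it will want anyway (it reuses the complementary-dimension fact elsewhere), whereas yours is more self-contained and makes the role of non-degeneracy explicit. Both are valid; your bookkeeping around $f^{-1}$ versus $f$ is handled correctly.
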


\begin{proof}
  By definition, the subspaces $\Fix(f)$ and $\Mov(f)$ have
  complementary dimensions, so it is enough to show that $\beta(u, v)
  = 0$ for every $u \in \Fix(f)$ and $v \in \Mov(f)$.  For this, write
  $v = w - f(w)$ for some $w \in V$.  Then
  \begin{align*}
    \beta(u, v) &= \beta(u, w - f(w))
    = \beta(u, w) - \beta(u, f(w)) \\
    &= \beta(u, w) - \beta(f(u), f(w))
    = 0. \qedhere
  \end{align*}
\end{proof}

Notice that an isometry $f \in O(V)$ is a reflection if and only
if $\Mov(f)$ is one-dimensional (in which case $f=r_v$ where $\Mov(f) = \< v \>$), and this happens if and only if $\Fix(f)$ is a hyperplane (in
which case $\Fix(f) = \< v \>^\perp$).

When $f$ is not a reflection, its moved space $\Mov(f)$ does not
determine $f$ uniquely.  For example, if $V = \R^n$ and $Q$ is the standard
(positive definite) quadratic form, a $2$-dimensional subspace $W
\subseteq V$ is the moved space of infinitely many rotations.  By
\Cref{lemma:fix-move-orthogonal}, each of $\Fix(f)$ and $\Mov(f)$
determines the other, so no additional information comes from knowing
both of them.
The Wall form adds the information needed to determine $f$.

\begin{definition}[\cite{wall1959structure}]\label{def:wall-form}
  Let $f \in O(V)$ be an isometry.  The \emph{Wall form} of $f$ is the
  bilinear form $\chi_f$ on $\Mov(f)$ defined as $\chi_f(u, v) =
  \beta(w, v)$, where $w \in V$ is any vector such that $u = w -
  f(w)$.
\end{definition}

\begin{theorem}\label{thm:wall-form}
  The Wall form $\chi_f$ is a well-defined non-degenerate bilinear
  form on $\Mov(f)$, and it satisfies $\chi_f(u,u) = Q(u)$ for all $u
  \in V$.
\end{theorem}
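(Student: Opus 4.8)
The plan is to check the three assertions in order, with \Cref{lemma:fix-move-orthogonal} carrying the main load for both well-definedness and non-degeneracy. Throughout, note that $\chi_f$ lives on $\Mov(f)$, so the displayed identity is to be read for $u \in \Mov(f)$, where a preimage $w$ with $u = w - f(w)$ exists by definition of $\Mov(f) = \im(\id - f)$.

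First I would establish well-definedness and bilinearity. If $u = w - f(w) = w' - f(w')$, then $w - w' \in \ker(\id - f) = \Fix(f)$, which by \Cref{lemma:fix-move-orthogonal} equals $\Mov(f)^\perp$; hence $\beta(w - w', v) = 0$ for every $v \in \Mov(f)$, so $\beta(w, v) = \beta(w', v)$, and the value $\chi_f(u, v)$ is independent of the chosen preimage $w$. Linearity in the second argument is immediate from bilinearity of $\beta$. For linearity in the first argument, fix any linear section $s \colon \Mov(f) \to V$ of the surjection $\id - f \colon V \to \Mov(f)$; by the independence just proved, $\chi_f(u, v) = \beta(s(u), v)$, and the right-hand side is bilinear in $(u, v)$.

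Next, the identity $\chi_f(u, u) = Q(u)$. Write $u = w - f(w)$. Expanding $Q$ via the polar form and using that $f$ preserves $Q$ gives
\[
Q(u) = Q(w) + Q(f(w)) - \beta(w, f(w)) = 2\,Q(w) - \beta(w, f(w)).
\]
On the other hand $\chi_f(u, u) = \beta(w, w - f(w)) = \beta(w, w) - \beta(w, f(w))$, and $\beta(w, w) = Q(2w) - 2Q(w) = 2\,Q(w)$, so comparing the two expressions yields $\chi_f(u, u) = Q(u)$. This computation is uniform in the characteristic: in characteristic $2$ both sides of $\beta(w,w) = 2Q(w)$ vanish, consistently with $\beta$ being alternating.

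Finally, non-degeneracy. Suppose $v \in \Mov(f)$ satisfies $\chi_f(u, v) = 0$ for all $u \in \Mov(f)$. Since every $w \in V$ yields $u = w - f(w) \in \Mov(f)$, this forces $\beta(w, v) = 0$ for all $w \in V$, hence $v = 0$ by non-degeneracy of $\beta$; as $\Mov(f)$ is finite-dimensional this vanishing of the radical suffices. (One can equally argue on the other side: if $\beta(w, v) = 0$ for all $v \in \Mov(f)$, then $w \in \Mov(f)^\perp = \Fix(f)$ by \Cref{lemma:fix-move-orthogonal}, so $u = w - f(w) = 0$.) I do not expect a genuine obstacle in this argument; the only point demanding care is keeping the polar-form bookkeeping valid in characteristic $2$, where $\beta$ no longer determines $Q$.
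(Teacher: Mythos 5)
Your proposal is correct and follows essentially the same route as the paper: well-definedness via \Cref{lemma:fix-move-orthogonal}, non-degeneracy by pulling back to the non-degeneracy of $\beta$ on all of $V$, and the identity $\chi_f(u,u) = Q(u)$ by a polar-form expansion that uses $Q(f(w)) = Q(w)$. Your rearrangement of that last computation (comparing $2Q(w) - \beta(w, f(w))$ on both sides rather than expanding $-\beta(w,-u)$) and the explicit remark about linearity in the first argument are cosmetic variants of the paper's argument, not a different method.
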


\begin{proof}
  Suppose that $u = w - f(w) = w' - f(w')$ for some $w, w' \in V$.
  Then $w - w' \in \Fix(f) = \Mov(f)^\perp$ by
  \Cref{lemma:fix-move-orthogonal}, and therefore $\beta(w, v) - \beta(w', v) = \beta(w - w', v) = 0$,
  so $\chi_f(u, v)$ is well-defined.
	
  It is immediate to see that $\chi_f$ is a bilinear form.  If
  $\chi_f$ is degenerate, then there is a non-zero vector $v \in
  \Mov(f)$ such that $\chi_f(u, v) = 0$ for all $u \in \Mov(f)$.  Then
  $\beta(w, v) = 0$ for all $w \in V$.  This is impossible, because
  $\beta$ is non-degenerate.
	
  Finally, if $u = w - f(w)$, we have $\chi_f(u, u) = \beta(w, u) =
  -\beta(w, -u) = Q(w) + Q(u) - Q(w - u) = Q(w) + Q(u) - Q(f(w)) =
  Q(u)$.
\end{proof}

The Wall form $\chi_f$ is not necessarily symmetric.  In fact, we show
in \Cref{lemma:wall-form-properties} that $\chi_f$ is symmetric if and
only if $f$ is an involution.  As anticipated, the Wall form $\chi_f$
carries enough information to recover the isometry $f$.

\begin{theorem}[Wall's parametrization]
  \label{thm:wall-parametrization}
  The map $f \mapsto (\Mov(f), \chi_f)$ is a one-to-one correspondence
  between the orthogonal group $O(V)$ and the set of pairs $(W, \chi)$
  such that $W$ is a subspace of $V$ and $\chi$ is a non-degenerate
  bilinear form on $W$ satisfying $\chi(u,u) = Q(u)$ for $u \in W$.
\end{theorem}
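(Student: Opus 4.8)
The plan is to prove the two halves of the bijectivity separately; that the assignment $f \mapsto (\Mov(f), \chi_f)$ actually lands in the described set of pairs is exactly \Cref{thm:wall-form}. In both directions the key observation is the characterization of the movement vector $x - f(x)$ implicit in \Cref{def:wall-form}: for $x \in V$ the vector $x - f(x)$ lies in $\Mov(f)$ and satisfies $\chi_f(x - f(x), v) = \beta(x, v)$ for every $v \in \Mov(f)$ (apply the definition of $\chi_f$ with $w = x$, noting $x - f(x) = w - f(w)$).

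\emph{Injectivity.} Suppose $f, g \in O(V)$ have $\Mov(f) = \Mov(g) = W$ and $\chi_f = \chi_g = \chi$. Fix $x \in V$. Then $x - f(x)$ and $x - g(x)$ both lie in $W$, and by the observation above $\chi(x - f(x), v) = \beta(x, v) = \chi(x - g(x), v)$ for all $v \in W$. Hence $\chi(g(x) - f(x), v) = 0$ for all $v \in W$, and since $g(x) - f(x) \in W$ and $\chi$ is non-degenerate we get $f(x) = g(x)$. As $x$ was arbitrary, $f = g$.

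\emph{Surjectivity.} Let $(W, \chi)$ be a pair as in the statement. Since $\chi$ is non-degenerate on the finite-dimensional space $W$, the linear map $W \to W^*$ sending $m$ to $\chi(m, \cdot)$ is an isomorphism, so for each $x \in V$ there is a unique $m(x) \in W$ with $\chi(m(x), v) = \beta(x, v)$ for all $v \in W$; uniqueness makes $x \mapsto m(x)$ linear. Define $f = \id_V - m$. It then remains to verify:
\begin{enumerate}
  \item $f \in O(V)$: using $Q(x - m(x)) = Q(x) + Q(m(x)) - \beta(x, m(x))$ together with $Q(m(x)) = \chi(m(x), m(x))$ (the hypothesis on $\chi$, as $m(x) \in W$) and $\beta(x, m(x)) = \chi(m(x), m(x))$ (the defining property of $m(x)$ at $v = m(x)$), the last two terms cancel, so $Q(f(x)) = Q(x)$; polarizing shows $f$ preserves $\beta$, hence $f \in \GL(V)$ and $f \in O(V)$.
  \item $\Mov(f) = W$: the inclusion $\Mov(f) = \im(m) \subseteq W$ is immediate. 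Conversely, given $w \in W$, the functional $v \mapsto \chi(w, v)$ on $W$ equals $\beta(x, \cdot)|_W$ for some $x \in V$, because the restriction map $V \to W^*$, $x \mapsto \beta(x, \cdot)|_W$, has kernel $W^\perp$ and hence is surjective by $\dim W + \dim W^\perp = \dim V$; for such $x$, uniqueness forces $m(x) = w$, so $w = x - f(x) \in \Mov(f)$.
  \item $\chi_f = \chi$: for $u, v \in W$ write $u = m(x)$; then $\chi_f(u, v) = \beta(x, v) = \chi(m(x), v) = \chi(u, v)$.
\end{enumerate}
Thus $f$ maps to $(W, \chi)$, completing the proof.

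The only step that is not routine is the inclusion $W \subseteq \Mov(f)$ in (2): one must rule out that the reconstructed $f$ has a strictly smaller moved space. This is where non-degeneracy of $\beta$ on all of $V$ enters, via the dimension identity $\dim W + \dim W^\perp = \dim V$; everything else is bookkeeping with the definitions and the polarization identity.
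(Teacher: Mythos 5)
Your proof is correct and follows essentially the same strategy as the paper: injectivity via non-degeneracy of $\chi$, and surjectivity by inverting the map $W \to W^*$ induced by $\chi$ to define $f = \id - m$. The only difference is that you spell out the containment $W \subseteq \Mov(f)$ via the dimension count $\dim W + \dim W^\perp = \dim V$, whereas the paper treats $\Mov(f) = W$ as immediate from the construction; your extra care there is warranted.
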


\begin{proof}
  To prove injectivity, consider two isometries $f, g \in O(V)$ such
  that $\Mov(f) = \Mov(g) = W$ and $\chi_f = \chi_g = \chi$.  By
  definition of Wall form, $\chi_f(w-f(w), v) = \beta(w, v) =
  \chi_g(w-g(w), v)$ and therefore $\chi(w-f(w), v) = \chi(w-g(w),
  v)$, for every $v \in W$ and $w \in V$.  Since $\chi$ is
  non-degenerate, this implies that $w-f(w) = w-g(w)$ for all $w \in
  V$, thus $f = g$.
	
  To prove surjectivity, given a pair $(W, \chi)$, we want to construct
  an isometry $f \in O(V)$ such that $\Mov(f) = W$ and $\chi_f =
  \chi$.  For $w \in V$, denote by $\alpha_w \in W^*$ the linear
  functional given by $\alpha_w(v) = \beta(w, v)$.  Since $\chi$ is
  non-degenerate, the linear map $\varphi \colon W \to W^*$ given by
  $\varphi(u)(v) = \chi(u, v)$ is an isomorphism.  Define $f\colon V
  \to V$ as follows: $f(w) = w - \varphi^{-1}(\alpha_w)$.  By
  construction, for any $w \in V$ and $v \in W$ we have
  \begin{equation}
    \beta(w, v) = \alpha_w(v) = \varphi(w - f(w))(v) = \chi(w - f(w), v).
    \label{eq:wall-parametrization}
  \end{equation}
	
  This allows us to check that $f$ is an isometry. Indeed, by setting
  $v = w - f(w)$ in \cref{eq:wall-parametrization} we obtain
  \begin{align*}
  	\beta(w, w - f(w)) &= \chi(w - f(w), w - f(w)) = Q(w-f(w)) \\
  	&= Q(w) + Q(f(w)) - \beta(w, f(w)),
  \end{align*}
  which simplifies to $Q(f(w)) = Q(w)$.  By
  definition of $f$, we immediately see that $\Mov(f) = W$, and
  \cref{eq:wall-parametrization} implies that $\chi = \chi_f$.
\end{proof}

We now list some properties of the Wall form.

\begin{lemma}\label{lemma:wall-form-properties}
  For every $f \in O(V)$ and $u, v \in \Mov(f)$, the following properties hold.
  \begin{enumerate}[(i)]
  \item $\chi_f(u, v) + \chi_f(v, u) = \beta(u, v)$.
  \item $\chi_f(f(u), v) = -\chi_f(v, u)$.
  \item $\Mov(f) = \Mov(f^{-1})$ and $\chi_{f^{-1}}(u, v) = \chi_f(v,
    u)$.
  \item $\Mov(gfg^{-1}) = g(\Mov(f))$ and $\chi_{gfg^{-1}}(g(u), g(v))
    = \chi_f(u, v)$ for every $g \in O(V)$.
  \item $\chi_f$ is symmetric if and only if $f$ is an involution.
  \end{enumerate}
\end{lemma}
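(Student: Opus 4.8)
The plan is to prove the five items in order, leaning on two facts that recur throughout. First, $\Mov(f)$ is invariant under $f$: for $u = w - f(w)$ we have $f(u) = (\id - f)(f(w)) \in \im(\id - f) = \Mov(f)$, so $f$ restricts to an automorphism of the finite-dimensional space $\Mov(f)$, and likewise $f^{-1}(\Mov(f)) = \Mov(f)$. Second, the defining relation of the Wall form reads $\chi_f(w - f(w), v) = \beta(w, v)$, so every identity below is obtained by choosing a convenient ``preimage'' $w$ of a vector under $\id - f$.

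For (i), I would polarize the identity $\chi_f(z, z) = Q(z)$ of \Cref{thm:wall-form}: substituting $z = u + v$, expanding by bilinearity of $\chi_f$, and using $Q(u+v) = Q(u) + Q(v) + \beta(u,v)$ gives $\chi_f(u,v) + \chi_f(v,u) = \beta(u,v)$ in any characteristic. For (ii), write $u = w - f(w)$; then $f(w)$ is a preimage of $f(u)$ under $\id - f$, so $\chi_f(f(u), v) = \beta(f(w), v)$, and combining this with $\chi_f(u,v) = \beta(w,v)$ and $\beta(u,v) = \beta(w,v) - \beta(f(w),v)$ yields $\chi_f(f(u),v) = \chi_f(u,v) - \beta(u,v)$, which equals $-\chi_f(v,u)$ by (i).

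Parts (iii) and (iv) follow the same pattern. For (iii), the identity $\id - f^{-1} = -f^{-1} \circ (\id - f)$ together with the invariance of $\Mov(f)$ gives $\Mov(f^{-1}) = f^{-1}(\Mov(f)) = \Mov(f)$; and if $u = w - f^{-1}(w)$ then $-f(u) = w - f(w)$, so $\chi_{f^{-1}}(u,v) = \beta(w,v) = \chi_f(-f(u), v) = \chi_f(v, u)$ using (ii). For (iv), $\id - gfg^{-1} = g \circ (\id - f) \circ g^{-1}$ immediately gives $\Mov(gfg^{-1}) = g(\Mov(f))$, and since $g(w)$ is a preimage of $g(u)$ under $\id - gfg^{-1}$ we get $\chi_{gfg^{-1}}(g(u), g(v)) = \beta(g(w), g(v)) = \beta(w, v) = \chi_f(u, v)$, as $g$ preserves $\beta$.

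For (v): if $f^2 = \id$ then $f = f^{-1}$ and (iii) gives $\chi_f(u,v) = \chi_{f^{-1}}(u,v) = \chi_f(v,u)$, so $\chi_f$ is symmetric. Conversely, if $\chi_f$ is symmetric then (ii) yields $\chi_f(f(u) + u, v) = -\chi_f(v,u) + \chi_f(u,v) = 0$ for all $v \in \Mov(f)$; since $f(u) + u \in \Mov(f)$ and $\chi_f$ is non-degenerate, $f(u) = -u$ for every $u \in \Mov(f)$. Then for any $w \in V$, with $u := w - f(w) \in \Mov(f)$, one has $f^2(w) = f(f(w)) = f(w - u) = f(w) - f(u) = f(w) + u = w$. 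The one point to be careful about is exactly here: one should not argue via $V = \Fix(f) \oplus \Mov(f)$, which can fail in general; the direct computation of $f^2(w)$ avoids this and is characteristic-free. I expect this to be the only real (and minor) subtlety.
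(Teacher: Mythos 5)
Your proofs of parts (i)--(iv) are correct and follow essentially the same computations as the paper, packaged slightly differently: you phrase each step as ``pick a convenient preimage under $\id - f$,'' whereas the paper verifies that the proposed formula satisfies the defining relation $\chi(w - f(w), v) = \beta(w, v)$ of the Wall form. These are the same argument in two costumes. For (ii) specifically, the paper's one-line chain
\[
\chi_f(f(u), v) + \chi_f(v, u) = \chi_f(f(u), v) - \chi_f(u, v) + \beta(u, v) = \beta(u, v) - \chi_f(u - f(u), v) = 0
\]
is equivalent to your computation with $f(w)$ as the preimage of $f(u)$.

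Part (v) is where you genuinely diverge, and it is a nice alternative. The paper proves the converse (``$\chi_f$ symmetric $\Rightarrow$ $f$ involution'') by combining (iii) with the \emph{injectivity} half of Wall's parametrization (\Cref{thm:wall-parametrization}): since $\chi_{f^{-1}} = \chi_f$ and $\Mov(f^{-1}) = \Mov(f)$, uniqueness forces $f^{-1} = f$. You instead use (ii) together with non-degeneracy of $\chi_f$ to show directly that $f$ acts as $-\id$ on $\Mov(f)$, and then compute $f^2(w) = w$ for arbitrary $w \in V$ by writing $f(w) = w - u$ with $u \in \Mov(f)$. Your route is more elementary in that it avoids re-invoking the parametrization theorem, and it yields the stronger intermediate fact $f|_{\Mov(f)} = -\id$ for free; the paper's route is shorter given that \Cref{thm:wall-parametrization} is already on the table. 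Your caution about not decomposing $V = \Fix(f) \oplus \Mov(f)$ is well-placed (that decomposition can fail when $\Mov(f)$ is degenerate), though note the paper's argument also avoids it, so it is not a gap you are repairing, just a pitfall you are sidestepping.
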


\begin{proof}
  (i) follows from the identity $\chi_f(u,u) = Q(u)$ of
  \Cref{thm:wall-form}, by replacing $u$ with $u+v$.  To prove (ii),
  write
  \begin{align*}
      \chi_f(f(u), v) + \chi_f(v, u) &= \chi_f(f(u), v) -
  \chi_f(u, v) + \beta(u, v) \\
  &= \beta(u, v) - \chi_f(u-f(u), v) = 0,
  \end{align*}
  where the first equality follows from (i) and the last equality
  follows from the definition of $\chi_f$.  In (iii), it is obvious
  that $\Mov(f) = \Mov(f^{-1})$, and we only need to check that
  $\tilde\chi_{f^{-1}}(u,v) := \chi_f(v,u)$ satisfies
  \Cref{def:wall-form} for $f^{-1}$.  Indeed,
  \begin{align*}
    \tilde\chi_{f^{-1}}\big(w-f^{-1}(w), v \big) &= \chi_f\big(v, w-f^{-1}(w)\big) = - \chi_f(f(w) - w, v) \\
    &= \chi_f(w - f(w), v) =
  \beta(w, v),
  \end{align*}
  where the second equality follows from (ii).  In
  (iv), it is immediate that $\Mov(gfg^{-1}) = g(\Mov(f))$. We show
  that $\tilde\chi_f (u,v) := \chi_{gfg^{-1}}(g(u), g(v))$ satisfies
  \Cref{def:wall-form}:
  \begin{align*}
    \tilde\chi_f(w - f(w), v) &=
    \chi_{gfg^{-1}}(g(w - f(w)), g(v)) \\
    &= \chi_{gfg^{-1}}(g(w) - g(f(w)), g(v)) \\
    &= \beta(g(w), g(v)) = \beta(w, v).
  \end{align*}
  Therefore $\tilde\chi_f
  = \chi_f$.  In (v), if $\chi_f$ is symmetric, then we can apply
  (iii) and deduce that $\Mov(f^{-1}) = \Mov(f)$ and
  $\chi_{f^{-1}}(u,v) = \chi_f(v, u) = \chi_f(u, v)$.  Then
  $\chi_{f^{-1}} = \chi_f$, so $f^{-1} = f$ by
  \Cref{thm:wall-parametrization}.  Conversely, if $f$ is an
  involution, then $\chi_f(u, v) = \chi_{f^{-1}}(u, v) = \chi_f(v,
  u)$, so $\chi_f$ is symmetric.
\end{proof}

Fix a subspace $W \subseteq V$, and look at all isometries $f \in
O(V)$ such that $\Mov(f) = W$.  Property (i) of
\Cref{lemma:wall-form-properties} says that the symmetrization of the
Wall form $\chi_f$ is necessarily equal to the ambient bilinear form
$\beta$ (restricted to $W = \Mov(f)$).  In particular, if $W$ is
non-degenerate and the characteristic of $\F$ is not $2$, there is
exactly one isometry $f$ such that $\Mov(f) = W$ and $\chi_f$ is
symmetric, and $f$ is an involution by property (v).  On the opposite
side, if $W$ is totally singular, then $\chi_f$ is alternating by
\Cref{thm:wall-form}.  In this case, isometries $f$ with $\Mov(f) = W$
only exist if $\dim W$ is even (otherwise every alternating bilinear
form on $W$ is degenerate, as the rank is necessarily even; see for
example \cite[Theorem 2.10]{grove2002classical}).

\section{Factorizations and reflection length}\label{sec:factorizations}

In this section, we continue to follow \cite{wall1959structure} and
\cite[Chapter 11]{taylor1992geometry} and show how Wall's
parametrization leads to a nice procedure to build factorizations of
isometries.  For a field $\F \neq \F_2$, this allows proving that any
isometry $f \in O(V)$ can be written as a product of reflections. It
also allows us to characterize the \emph{reflection length}, i.e., the
minimal length $k$ of a factorization $f = r_1r_2\dotsm r_k$ as a
product of reflections.  We refer to \cite[Theorem
  11.41]{taylor1992geometry} for the case $\F = \F_2$, which we do not
treat here.  Finally, at the end of this section, we introduce the
spinor norm.

\begin{definition}[Orthogonal complements]\label{def:left-right-orthogonal-complement}
  Let $\chi$ be a non-degenerate bilinear form on a finite-dimensional
  vector space $W$.  Define the \emph{left} and \emph{right orthogonal
    complement} of a subspace $U \subseteq W$ as \begin{align*}
    U^{\triangleleft} &= \{ v \in W \mid \chi(v, u) = 0 \, \text{ for
      all } u \in U \} \\ U^{\triangleright} &= \{ v \in W \mid
    \chi(u, v) = 0 \, \text{ for all } u \in U \},
	\end{align*} respectively.
\end{definition}

Since $\chi$ is non-degenerate, we have that $\dim U^\triangleleft =
\dim U^\triangleright = \dim W - \dim U$. As an immediate consequence,
$(U^\triangleright)^\triangleleft = (U^\triangleleft)^\triangleright =
U$.  We will mostly use this notation in the case where $\chi =
\chi_f$ is the Wall form of an isometry $f \in O(V)$ and $W =
\Mov(f)$.

The following is the basic building block that allows us to construct
factorizations of isometries.

\begin{theorem}[Factorization theorem]\label{thm:factorization}
  Let $f \in O(V)$ be an isometry, and let $U_1 \subseteq \Mov(f)$ be
  a subspace such that the restriction $\chi_1 = \chi_f|_{U_1}$ is
  non-degenerate.  Let $U_2 = U_1^\triangleright$ (respectively, $U_2
  = U_1^\triangleleft$), and $\chi_2 = \chi_f|_{U_2}$.  Denote by
  $f_1$ and $f_2$ the elements of $O(V)$ associated with $(U_1,
  \chi_1)$ and $(U_2, \chi_2)$ under Wall's parametrization.
  \begin{enumerate}[(a)]
  \item $\Mov(f) = U_1 \oplus U_2$, and $f = f_1 f_2$ (respectively,
    $f = f_2f_1$).
  \item $f_1 f_2 = f_2 f_1$ if and only if $\Mov(f) = U_1 \perp U_2$.
    In this case, $f_1$ coincides with $f$ on $U_2^\perp$, and $f_2$
    coincides with $f$ on $U_1^\perp$.
  \end{enumerate}
  Conversely, every factorization $f = f_1f_2$ with $\Mov(f) =
  \Mov(f_1) \oplus \Mov(f_2)$ arises in this way.
\end{theorem}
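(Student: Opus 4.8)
The plan is to use Wall's parametrization (\Cref{thm:wall-parametrization}) as the bookkeeping device throughout, so that every statement about isometries is translated into a statement about subspaces of $\Mov(f)$ and restrictions of $\chi_f$. I will treat the case $U_2 = U_1^\triangleright$ in detail; the case $U_2 = U_1^\triangleleft$ is entirely symmetric (or follows by applying the first case to $f^{-1}$ using \Cref{lemma:wall-form-properties}(iii)). First I would establish the direct-sum decomposition $\Mov(f) = U_1 \oplus U_2$: since $\chi_1 = \chi_f|_{U_1}$ is non-degenerate, $U_1 \cap U_1^\triangleright = 0$, and a dimension count using $\dim U_1^\triangleright = \dim \Mov(f) - \dim U_1$ (noted right after \Cref{def:left-right-orthogonal-complement}) gives $\Mov(f) = U_1 \oplus U_2$. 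In particular $\chi_2 = \chi_f|_{U_2}$ is also non-degenerate — otherwise a nonzero radical vector of $\chi_2$ in $U_2 = U_1^\triangleright$ would be $\chi_f$-orthogonal to all of $U_1 \oplus U_2 = \Mov(f)$ on the right, contradicting non-degeneracy of $\chi_f$. So $f_1$ and $f_2$ are well-defined via \Cref{thm:wall-parametrization}.

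For part (a), the key computation is that $f = f_1 f_2$. The cleanest way is to verify that $f_1 f_2$ has moved space $U_1 \oplus U_2 = \Mov(f)$ and Wall form equal to $\chi_f$, then invoke injectivity in \Cref{thm:wall-parametrization}. Concretely, for $w \in V$ write $w - f_2(w) \in U_2$ (so $f_2(w) = w$ on $U_2^\perp \supseteq U_1^\perp$... actually one must be careful: $U_2 \subseteq \Mov(f) \subseteq V$, and $w - f_2(w) = \varphi_2^{-1}(\alpha_w|_{U_2})$ in the notation of the surjectivity argument). I would chase the defining identity \cref{eq:wall-parametrization}: for $v \in U_1$, $\beta(w, v) = \chi_1(w - f_2(f_2^{-1}\cdots))$... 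The honest route is: set $w' = f_2(w)$; then $w - w' \in U_2 = U_1^\triangleright$, so for all $v \in U_1$ we have $\chi_f(w - w', v) = \chi_1(w-w', v)$ reduced appropriately — in fact $\chi_f(w-w',v)$ need not vanish since $\chi_f$ restricted to $U_1 \times U_2$ is not controlled; rather, $\chi_1(w-f_2(w) \text{'s } U_1\text{-part})$... I will instead verify directly that $(\id - f_1 f_2)(w) = (\id - f)(w)$ by splitting $(\id - f)(w) \in \Mov(f) = U_1 \oplus U_2$ into components and matching each against the definitions $f_i(w) = w - \varphi_i^{-1}(\alpha_w|_{U_i})$; the orthogonality $\chi_f(U_1, U_2) $-behavior forces the $U_2$-component of $w - f_1(w)$ to interact with $U_1$ trivially, which is exactly what $U_1^\triangleright$ provides. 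This is the main obstacle: getting the components to line up requires using both the definition of the $f_i$ and the precise left/right orthogonality, and it is easy to confuse which side of $\chi_f$ is being annihilated. Once $f = f_1 f_2$ is checked one way, checking it is \emph{not} $f_2 f_1$ in general follows from part (b).

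For part (b), I would argue that $f_1 f_2 = f_2 f_1$ iff $\Mov(f) = U_1 \perp U_2$ as follows. If the sum is $\perp$ (i.e., also $\beta$-orthogonal), then by \Cref{lemma:wall-form-properties}(i) the form $\chi_f = \chi_1 \oplus \chi_2$ is block-diagonal with respect to $U_1 \oplus U_2$, and a direct check shows $f_1$ and $f_2$ act as $f$ on $U_1$ resp.\ $U_2$ and as the identity on the other summand and on $\Mov(f)^\perp = \Fix(f)$; hence they commute, and each equals $f$ on the relevant $U_i^\perp$. Conversely, if $f_1 f_2 = f_2 f_1$, then applying $\id - f_1 f_2 = \id - f_2 f_1$ and comparing moved spaces, or more cleanly using that $f_1$ and $f_2$ commute forces $f_1$ to preserve $\Mov(f_2) = U_2$ and $f_2$ to preserve $\Mov(f_1) = U_1$; combined with part (a) and \Cref{lemma:wall-form-properties}(ii) this forces $\chi_f(U_1, U_2) = \chi_f(U_2, U_1) = 0$, whence $\beta(U_1, U_2) = 0$ by (i), i.e., $\Mov(f) = U_1 \perp U_2$. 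Finally, for the converse statement of the theorem (``every factorization with $\Mov(f) = \Mov(f_1) \oplus \Mov(f_2)$ arises this way''), given such $f = f_1 f_2$ I would set $U_1 := \Mov(f_1)$ and show $\Mov(f_2) = U_1^\triangleright$ with respect to $\chi_f$: the inclusion $\Mov(f_2) \subseteq U_1^\triangleright$ comes from expanding $\chi_f(w - f(w), v)$ for $v \in U_1$, writing $f = f_1 f_2$, and using the Wall-form definition for $f_1$ to see the pairing against $\Mov(f_2)$ vanishes on the right; equality then follows from the dimension count, and the identification of the Wall forms $\chi_{f_i} = \chi_f|_{U_i}$ is again forced by \Cref{thm:wall-parametrization}. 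The subtle point here, as in part (a), is consistently tracking which argument slot of $\chi_f$ is being killed — that is where I expect to spend the most care.
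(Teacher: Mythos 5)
Your overall plan is essentially the paper's: establish $\Mov(f) = U_1 \oplus U_2$ by a dimension count, handle $U_2 = U_1^\triangleleft$ by the $f \mapsto f^{-1}$ trick (a fine alternative to the paper's ``analogous''), prove (b) via commutation and properties (i), (ii), (iv) of \Cref{lemma:wall-form-properties}, and prove the converse by writing $w - f(w) = (w - f_2(w)) + (f_2(w) - f_1 f_2(w))$ and appealing to the direct-sum hypothesis. The problem is that the pivotal step of part (a), namely $f = f_1 f_2$, is never actually executed: you start and backtrack twice and end by flagging it as where you ``expect to spend the most care,'' and the sketch already contains the slot error you were worried about. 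You write that ``$\chi_f(w-w', v)$ need not vanish since $\chi_f$ restricted to $U_1 \times U_2$ is not controlled,'' but with $w - w' \in U_2$ and $v \in U_1$ this is a pairing on $U_2 \times U_1$, not $U_1 \times U_2$; and the side that \emph{is} controlled (vanishes) is $U_1 \times U_2$, by definition of $U_1^\triangleright$. The parenthetical ``$U_2^\perp \supseteq U_1^\perp$'' is also false in general, since $U_1$ and $U_2$ are complementary, not nested.

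The paper's version of part (a) avoids component-matching altogether: it pairs $w - f_1 f_2(w)$ on the \emph{right} against an arbitrary $u_1 + u_2 \in \Mov(f)$ using $\chi_f$. After expanding $w - f_1 f_2(w) = (w - f_2(w)) + (\id - f_1) f_2(w)$, the cross term $\chi_f\big((\id - f_1)f_2(w), u_2\big)$ drops because $(\id - f_1)f_2(w) \in U_1$ and $u_2 \in U_1^\triangleright$; the remaining three terms convert to $\beta$-values via \Cref{lemma:wall-form-properties}(i) (for $\chi_f(w - f_2(w), u_1)$, whose symmetric partner vanishes for the same reason) and via the Wall-form definition of $\chi_{f_1}$ and $\chi_{f_2}$ (for the other two); these sum to $\beta(w, u_1 + u_2) = \chi_f(w - f(w), u_1 + u_2)$, and non-degeneracy of $\chi_f$ on $\Mov(f)$ forces $w - f_1 f_2(w) = w - f(w)$. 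Once you commit to always pairing against $u_1 + u_2$ on the right, the left/right bookkeeping resolves itself; I would redo part (a) along these lines before trusting the rest of the argument.
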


\begin{proof}
  We prove part (a) in the case $U_2 = U_1^\triangleright$, the case
  $U_2 = U_1^\triangleleft$ being analogous.  Since $\chi_1$ is
  non-degenerate, no non-zero vector of $U_1$ can be right-orthogonal
  to all of $U_1$.  This means that $U_1 \cap U_2 = \{0\}$.  We also
  have $\dim U_1 + \dim U_2 = \dim \Mov(f)$, and therefore $\Mov(f) =
  U_1 \oplus U_2$.
	
  Notice that $\chi_2$ is non-degenerate because $\chi_f$ is
  non-degenerate, so $f_2$ is well-defined.  To prove that $f =
  f_1f_2$, consider the following chain of equalities that holds for
  every $w \in V$, $u_1 \in U_1$, and $u_2 \in U_2$:
  \begin{align*}
    \chi_f &\big(w - f_1f_2(w), u_1 + u_2\big) = \chi_f\big(w - f_2(w)
    + f_2(w) - f_1f_2(w), u_1 + u_2\big) \\ &= \chi_f(w - f_2(w), u_1
    + u_2) + \chi_f\big((\id - f_1)f_2(w), u_1 + u_2\big)
    \\ &\stackrel{(1)}{=} \chi_f\big(w-f_2(w), u_1\big) + \chi_f\big(w
    - f_2(w), u_2\big) + \chi_f\big((\id - f_1)f_2(w), u_1 \big)
    \\ &\stackrel{(2)}{=} \beta\big(w - f_2(w), u_1\big) + \beta(w,
    u_2) + \beta\big(f_2(w), u_1\big) \\ &= \beta(w, u_1 + u_2) \\ &=
    \chi_f\big(w - f(w), u_1 + u_2\big).
  \end{align*}
  Here (1) follows from bilinearity of $\chi_f$, the term
  $\chi_f\big((\id-f_1)f_2(w), u_2\big)$ vanishing because
  $(\id-f_1)f_2(w) \in \Mov(f_1) = U_1$ and $u_2 \in U_2$; in (2), the
  first term is rewritten using property (i) of
  \Cref{lemma:wall-form-properties}, whereas the other two terms are
  rewritten using the definitions of $\chi_1$ and $\chi_2$.  From the
  previous equalities and the fact that $\chi_f$ is non-degenerate, it
  follows that $w - f_1f_2(w) = w - f(w)$ for all $w \in V$, so $f =
  f_1f_2$.
	
  We now prove part (b).  Suppose that $f_1f_2 = f_2f_1$.  By
  property (iv) of \Cref{lemma:wall-form-properties}, $f$ fixes
  $\Mov(f_1) = U_1$.  Then, by property (ii), we have that $\chi_f(u_2, u_1) = - \chi_f(f(u_1), u_2) = 0$ for all $u_1 \in U_1$ and $u_2 \in U_2$.
  Therefore $U_2 = U_1^\triangleright = U_1^\triangleleft$.
  Property (i) implies that $\Mov(f) = U_1 \perp U_2$.
	
  Conversely, suppose that $\Mov(f) = U_1 \perp U_2$.  Since $U_2 =
  U_1^\triangleright$, property (i) of
  \Cref{lemma:wall-form-properties} implies that $U_2 =
  U_1^\triangleleft$.  By the first part of this theorem, we obtain
  that $f = f_2f_1$, and therefore $f_1f_2 = f_2f_1$.  In addition,
  $\Fix(f_2) = \Mov(f_2)^\perp = U_2^\perp$, and thus $f(v) =
  f_1f_2(v) = f_1(v)$ for every $v \in U_2^\perp$. Similarly, $f(v) =
  f_2f_1(v) = f_2(v)$ for every $v \in U_1^\perp$.
	
  Finally, given any factorization $f = f_1f_2$ such that $\Mov(f) =
  \Mov(f_1) \oplus \Mov(f_2)$, we need to show that
  $\chi_f|_{\Mov(f_2)} = \chi_{f_2}$.  Let $u, v \in \Mov(f_2)$.  By
  definition of $\chi_f$, we have that $\chi_f(u, v) = \beta(w, v)$,
  where $w \in V$ is a vector such that $u = w - f(w)$.  Now write $u
  = w - f_2(w) + f_2(w) - f_1f_2(w)$, and notice that $w-f_2(w) \in
  \Mov(f_2)$ and $f_2(w) - f_1f_2(w) \in \Mov(f_1)$.  Since $u \in
  \Mov(f_2)$ and $\Mov(f) = \Mov(f_1) \oplus \Mov(f_2)$, we have that
  $u = w - f_2(w)$.  Then $\chi_{f_2}(u, v) = \beta(w, v) = \chi_f(u,
  v)$.
\end{proof}

From the definition of moved space, it is easy to see that
$\Mov(f_1f_2) \subseteq \Mov(f_1) + \Mov(f_2)$ for any two isometries
$f_1, f_2 \in O(V)$.  \Cref{thm:factorization} allows to construct
factorizations $f = f_1f_2$ where the equality $\Mov(f_1f_2) =
\Mov(f_1) \oplus \Mov(f_2)$ holds.  These are called \emph{direct
  factorizations} in \cite{wall1959structure}.
More generally, we give the following definition.

\begin{definition}[Direct factorization]\label{def:direct-factorization}
  A factorization $f = f_1 \dotsm f_k$ is called a direct
  factorization if $\Mov(f) = \Mov(f_1)\oplus \dotsb
  \oplus \Mov(f_k)$ and no $f_i$ is the identity.
\end{definition}

Recall that the reflections are precisely the isometries with a
one-di\-men\-sio\-nal moved space.  The relation $\Mov(f_1f_2) \subseteq
\Mov(f_1) + \Mov(f_2)$ yields a lower bound on the reflection length
of an isometry $f \in O(V)$: if $f = r_1\dotsm r_k$ is a product of
$k$ reflections, then $\Mov(f) \subseteq \Mov(r_1) +
\dotsb + \Mov(r_k)$, so $k \geq \dim \Mov(f)$.  This lower bound is
attained precisely when the factorization is direct.
In the rest of this section, we are going to see that
most isometries admit a direct factorization, but not all of them.

\begin{lemma}\label{lemma:triangular-basis}
  Let $\chi$ be a non-degenerate bilinear form on a finite-dimensional
  vector space $W$ over a field $\F \neq \F_2$.  If $\chi$ is not
  alternating, then $W$ has a basis $e_1, \dotsc, e_m$ such that
  $\chi(e_i, e_i) \neq 0$ for all $i$, and $\chi(e_i, e_j) = 0$ for $i
  < j$.
\end{lemma}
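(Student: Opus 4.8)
The plan is to build the basis one vector at a time, peeling off a non-degenerate $1$-dimensional piece at each step and inducting on $\dim W$. The base case $m=1$ is trivial once we know $W$ contains a vector $e$ with $\chi(e,e)\neq 0$; such a vector exists precisely because $\chi$ is not alternating, so $\chi(e,e)\neq 0$ for some $e$. (This is the only place the non-alternating hypothesis is used, and it is exactly why we need it: in characteristic $2$ an alternating form has no such vector, and more generally the lemma would fail.) So the real content is the inductive step.

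For the inductive step, pick $e_1\in W$ with $\chi(e_1,e_1)\neq 0$, and set $W' = \langle e_1\rangle^{\triangleright}$, the right orthogonal complement of $\langle e_1\rangle$ with respect to $\chi$. Since $\chi$ is non-degenerate and $\chi(e_1,e_1)\neq 0$, we have $e_1\notin W'$, so $\dim W' = \dim W - 1$ and $W = \langle e_1\rangle \oplus W'$. The restriction $\chi|_{W'}$ is again non-degenerate: if $v\in W'$ were right-orthogonal (within $W'$) to all of $W'$, then since every $w\in W$ decomposes as $w = a e_1 + w'$ with $w'\in W'$, and $\chi(w, v) = a\,\chi(e_1,v) + \chi(w',v)$; the first term may not vanish, so this needs a small argument. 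The cleaner route: apply the induction hypothesis not to $W'$ directly but after checking $\chi|_{W'}$ is non-degenerate via a dimension count on its right radical inside $W$. Concretely, the right radical of $\chi|_{W'}$ is contained in the right radical of $\chi$ on $W$ only after intersecting with $\langle e_1\rangle^{\triangleright}$ on the other side too — so instead I would argue: suppose $0\neq v\in W'$ with $\chi(w',v)=0$ for all $w'\in W'$. Then the right-orthogonal complement of $v$ in $W$ contains $W'$, hence has dimension $\geq \dim W - 1$, so it is either $W$ (contradicting non-degeneracy of $\chi$) or exactly $W'$ together with... — this is getting delicate.

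Let me instead take the symmetric-looking but correct approach: use that $\chi|_{W'}$ non-degenerate is equivalent to $W' \cap (W')^{\triangleright_W}$ being trivial where the complement is taken in $W$, and compute $(W')^{\triangleright_W}$. We have $W' = \langle e_1\rangle^{\triangleright}$, so $(W')^{\triangleright} = \langle e_1\rangle^{\triangleright\triangleright}$; by the remarks following Definition~\ref{def:left-right-orthogonal-complement}, $(U^{\triangleright})^{\triangleleft}=U$, so $\langle e_1\rangle^{\triangleright\triangleleft} = \langle e_1\rangle$, but $\langle e_1\rangle^{\triangleright\triangleright}$ need not equal $\langle e_1\rangle$ when $\chi$ is non-symmetric. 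This asymmetry is exactly the main obstacle: one must be careful which side's complement is used so that the recursion closes. The fix is to be consistent — always peel off on the same side — and to verify $\chi|_{W'}$ is non-degenerate by a direct rank argument: the map $W\to W^*$, $w\mapsto \chi(w,-)$, is an isomorphism; restrict the target to $(W')^*$ via the quotient $W^*\to (W')^*$ and restrict the source to $W'$; the composite $W'\to (W')^*$ is $v\mapsto \chi(v,-)|_{W'}$, and it is injective because if $\chi(v,w')=0$ for all $w'\in W'$ and also $\chi(v,e_1) = 0$ would force $v=0$ — but we only know the former. So one genuinely needs: $v\in W'$ means $\chi(e_1,v)=0$, which says nothing about $\chi(v,e_1)$.

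Given these subtleties, the honest plan is: (1) use non-alternating to get $e_1$ with $\chi(e_1,e_1)\neq 0$; (2) set $W' = \langle e_1\rangle^{\triangleright}$, so $W = \langle e_1\rangle \oplus W'$ and $\chi(e_1, w')=0$ for all $w'\in W'$; (3) prove $\chi|_{W'}$ is non-degenerate — here I expect to need the observation that for $v \in W'$, $\chi(v,-)$ restricted to $W'$ vanishing plus $\chi(v, e_1)$ arbitrary is not immediately enough, so I would instead choose $W'$ more cleverly, e.g.\ replace $e_1$ by a rescaling and note $W' = \langle e_1 \rangle^{\triangleright}$ has the property that $\chi|_{W'}$ is non-degenerate because its left radical in $W'$ would be a left radical for $\chi$ on all of $W$ (since $\chi(a e_1 + w', v) = a\chi(e_1,v) + \chi(w',v)$ and we can... no). The resolution that actually works: the \emph{left} radical of $\chi|_{W'}$, call it $v$ with $\chi(v, w')=0 \ \forall w'\in W'$; then for any $w = ae_1 + w' \in W$, $\chi(w,v) = a\chi(e_1,v)+\chi(w',v)$; we don't control $\chi(e_1,v)$. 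Hence pick $W'$ to be $\langle e_1\rangle^{\triangleleft}$ instead when tracking left radicals — the point is that one of the two choices of side makes the radical computation go through, and the lemma only asks for \emph{some} basis with the stated upper-triangularity, so we have the freedom to pick the side. (4) Apply induction to $(W', \chi|_{W'})$ — note $\chi|_{W'}$ may now be alternating even if $\chi$ wasn't, but if so we can still find $e_2\in W'$ with $\chi(e_2,e_2)\neq 0$? No — if $\chi|_{W'}$ is alternating we're stuck. So actually the induction hypothesis must be strengthened, OR we observe $\chi|_{W'}$ is non-alternating whenever $\dim W' \geq 1$ and... this also can fail.

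I realize the clean statement to induct on should not assume non-alternating at every stage; rather, I would prove the stronger claim by a different route: order a basis so that the Gram matrix is upper triangular with nonzero diagonal, equivalently, find a flag $0 = W_0 \subset W_1 \subset \dots \subset W_m = W$ with $\dim W_i = i$ such that $\chi|_{W_i}$ is non-degenerate for every $i$; then any basis $(e_i)$ with $e_i \in W_i$ works, since $\chi(e_i,e_j) = 0$ for $i<j$ would need $e_i \perp^{\text{left}} W_j$... — to make the triangularity come out, build the flag from the top down: $W_m = W$; having $W_{i}$ non-degenerate of dimension $i$, find a hyperplane $W_{i-1}\subset W_i$ with $\chi|_{W_{i-1}}$ non-degenerate — possible unless every hyperplane of $W_i$ is degenerate, which (for $\dim W_i \geq 2$) would force $\chi|_{W_i}$ to be alternating, and for that one invokes that the restriction to a non-degenerate space is non-alternating at the top and remains so, which is the crux I'd have to nail down carefully.

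In summary: the structure is an induction building a complete flag of non-degenerate subspaces; the non-alternating hypothesis is exactly what prevents the induction from getting stuck (an alternating form has all its ``diagonal'' zero and no such flag); and the main obstacle is the left/right asymmetry of $\chi$, which forces care in choosing, at each step, which side's orthogonal complement to peel off so that (i) the restriction stays non-degenerate and (ii) the resulting Gram matrix is triangular in the claimed direction. I would resolve this by fixing the convention once (say, always pass to the hyperplane $\langle e\rangle^{\triangleleft}$ for a suitable $e$), checking non-degeneracy of the restriction by the rank/radical computation, and verifying at the end that with $e_i\in W_i\setminus W_{i-1}$ and $W_{i-1} = \langle \text{span of later basis vectors}\rangle^{\triangleleft}$ inside $W_i$, one indeed gets $\chi(e_i,e_j)=0$ for $i<j$.
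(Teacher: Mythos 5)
Your overall structure matches the paper's: pick $e_1$ with $\chi(e_1,e_1)\neq 0$, pass to $W'=\langle e_1\rangle^\triangleright$, show $\chi|_{W'}$ is non-degenerate, induct. But you explicitly run aground at the one point that carries all the weight of the lemma, and your proposal never recovers: \emph{what to do when $\chi|_{W'}$ is alternating}. You write ``if $\chi|_{W'}$ is alternating we're stuck. So actually the induction hypothesis must be strengthened, OR\ldots'' and then the flag sketch that follows also defers the same crux (``which is the crux I'd have to nail down carefully''). That crux is exactly the content of the proof. The paper handles it by an explicit construction: starting from $u$ with $\chi(u,u)\neq 0$, if $\chi|_{\langle u\rangle^\triangleright}$ is alternating, it produces a \emph{different} non-isotropic vector $e_1$ (of the form $u+v$ after a rescaling, using $\F\neq\F_2$ to pick $a$ with $\chi(u,u)+a\chi(v,u)\neq 0$), and then either shows $\chi|_{\langle e_1\rangle^\triangleright}$ is non-alternating or derives a contradiction. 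This is also where $\F\neq\F_2$ actually enters; your remark that the field hypothesis is used only to get the initial non-isotropic vector is not right, since that step only needs $\chi$ non-alternating.

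A secondary point: you spend a lot of effort worrying about whether $\chi|_{W'}$ is non-degenerate, but this is easy and you never need the contortions you attempt. With $W'=\langle e_1\rangle^\triangleright$, take $v\in W'$ in the right radical of $\chi|_{W'}$, i.e.\ $\chi(w',v)=0$ for all $w'\in W'$. Since also $\chi(e_1,v)=0$ (because $v\in\langle e_1\rangle^\triangleright$) and $W=\langle e_1\rangle\oplus W'$, we get $\chi(w,v)=0$ for all $w\in W$, so $v=0$ by non-degeneracy of $\chi$. Checking the right radical (rather than the left) is the consistent choice, and for a bilinear form on a finite-dimensional space the two radicals have the same dimension, so this suffices. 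With that settled, your $\chi(e_1,e_j)=0$ for $j>1$ also falls out immediately from $e_j\in\langle e_1\rangle^\triangleright$. What remains, and what your proposal does not supply, is the argument for the alternating-restriction case.

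Finally, the top-down flag idea you gesture at does not directly yield the triangular Gram matrix: having a complete flag $W_1\subset\cdots\subset W_m$ with each $\chi|_{W_i}$ non-degenerate and choosing $e_i\in W_i\setminus W_{i-1}$ does not by itself give $\chi(e_i,e_j)=0$ for $i<j$. You would still need each $e_i$ to lie in the left orthogonal of the span of the later $e_j$'s, which is a different flag condition, and establishing its existence returns you to the same obstruction. In short, the skeleton of the argument is right, but the proposal leaves the essential step (handling an alternating restriction by replacing $e_1$, using $\F\neq\F_2$) as an unfilled gap.
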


\begin{proof}
  Since $\chi$ is not alternating, there exists a vector $u \in W$
  such that $\chi(u, u) \neq 0$.  We prove the statement by induction on
  $m = \dim W$, the case $m=1$ being trivial.
  Suppose from now on that $m > 1$.  Note that $W = \< u
  \> \oplus \< u \>^\triangleright$, and that the restriction
  $\chi|_{\< u \>^\triangleright}$ is also non-degenerate.
	
  If $\chi|_{\< u \>^\triangleright}$ is not alternating, we are done
  by induction.  Suppose now by contradiction that $\chi|_{\< u
    \>^\triangleright}$ is alternating.  Let $v \in \< u
  \>^\triangleright$ be a non-zero vector chosen as follows: if $\< u
  \>^\triangleleft \neq \< u \>^\triangleright$, choose $v$ so that
  $\chi(v, u) \neq 0$; otherwise, choose any non-zero vector.  Since
  $\F \neq \F_2$, there exists $a \in \F^\times := \F\setminus \{0\}$ such that $\chi(u +
  av, u + av) = \chi(u, u) + a\chi(v, u)$ does not vanish.  By
  replacing $v$ with $av$, we may assume that $\chi(u+v, u+v) \neq 0$.
  Since $\chi|_{\< u \>^\triangleright}$ is non-degenerate and alternating, the dimension of $\< u
  \>^\triangleright$ is necessarily even, so it is at least $2$.  Then
  there exists a vector $w \in \< u \>^\triangleright$ such that
  $\chi(v, w) = 1$.  Define $c = \chi(u+v, u+v)$, so that for all $b
  \in \F$ we have
  \begin{align*}
    \chi(u+v, u+bv-cw) &= \chi(u, u) + \chi(v, u) - c \chi(v, w) \\
    &= \chi(u, u) + \chi(v, u) - \chi(u+v, u+v) = 0 \\
    \chi(u + bv - cw, u + bv - cw)
    &= \chi(u, u) + b \chi(v, u) - c \chi(w, u).
  \end{align*}
  If $\chi(u, u) + b \chi(v, u) - c \chi(w, u) \neq 0$ for some $b \in
  \F$, then the restriction $\chi|_{\< u + v \>^\triangleright}$ is
  non-degenerate and non-alternating.  Therefore we can set $e_1 =
  u+v$ and be done by induction.  Suppose instead that $\chi(u, u) + b
  \chi(v, u) - c \chi(w, u) = 0$ for all $b \in \F$.  Then $\chi(v, u)
  = 0$ and $\chi(w, u) \neq 0$. In particular, $w \in \< u \>^\triangleright$ and $w \not\in \< u \>^\triangleleft$, so $\< u
  \>^\triangleleft \neq \< u \>^\triangleright$.  This is a
  contradiction because $v$ was chosen so that $\chi(v, u) \neq
  0$.
\end{proof}

\begin{remark}
  It is worth noting that \Cref{lemma:triangular-basis} is false for
  $\F = \F_2$.  See \cite[Chapter 11]{taylor1992geometry} for
  additional details.
\end{remark}

The following lemma describes how the moved space changes when
multiplying an isometry by a reflection.

\begin{lemma}\label{lemma:reflection-multiplication}
  Let $f \in O(V)$ be an isometry, and let $v \in V$ be a non-singular vector.
  \begin{enumerate}[(a)]
  \item If $v \in \Mov(f)$, then $\Mov(r_v f) = \< v
    \>^\triangleright$, where the right orthogonal complement is taken
    inside $\Mov(f)$ with respect to the Wall form $\chi_f$.  In
    particular, $\dim \Mov(r_v f) = \dim \Mov(f) - 1$.
		
  \item If $v \not\in \Mov(f)$, then $\Mov(r_v f) = \Mov(f) \oplus \<
    v \>$.  In particular, $\dim \Mov(r_v f) = \dim \Mov(f) + 1$.
  \end{enumerate}
  As a consequence, if $f$ is a product of $k$ reflections, then $\dim
  \Mov(f) \equiv k \pmod 2$.
\end{lemma}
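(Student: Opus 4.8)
The plan is to handle the two cases separately. Part (a) will come almost for free from the Factorization Theorem (\Cref{thm:factorization}), once one recognizes that a reflection is precisely the factor it produces from a one-dimensional subspace; part (b) will come from a direct dimension count on fixed spaces; and the parity consequence is then an immediate induction.

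For part (a), suppose $v \in \Mov(f)$. Since $v$ is non-singular, \Cref{thm:wall-form} gives $\chi_f(v, v) = Q(v) \neq 0$, so $\chi_1 := \chi_f|_{\langle v \rangle}$ is non-degenerate and \Cref{thm:factorization} applies with $U_1 = \langle v \rangle$ and $U_2 = U_1^\triangleright$, the right orthogonal complement being taken inside $\Mov(f)$ with respect to $\chi_f$. It gives $f = f_1 f_2$ with $\Mov(f_2) = U_2 = \langle v \rangle^\triangleright$, where $f_1$ is the isometry attached to $(\langle v \rangle, \chi_1)$ under Wall's parametrization. Now $r_v$ has moved space $\langle v \rangle$, and by \Cref{thm:wall-form} its Wall form satisfies $\chi_{r_v}(v, v) = Q(v) = \chi_1(v, v)$; since a bilinear form on the line $\langle v \rangle$ is determined by its value at $v$, we get $\chi_{r_v} = \chi_1$, hence $f_1 = r_v$ by the injectivity in \Cref{thm:wall-parametrization}. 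As $r_v$ is an involution, $r_v f = f_2$, so $\Mov(r_v f) = \langle v \rangle^\triangleright$, which has dimension $\dim \Mov(f) - 1$ since $\Mov(f) = \langle v \rangle \oplus \langle v \rangle^\triangleright$.

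For part (b), suppose $v \notin \Mov(f)$. The claim is that $\Fix(r_v f) = \Fix(f) \cap \langle v \rangle^\perp$. Indeed, if $w \in \Fix(r_v f)$ then $f(w) = r_v(w) = w - \tfrac{\beta(w, v)}{Q(v)}\,v$, so $w - f(w)$ lies in $\langle v \rangle \cap \Mov(f)$, which is $\{0\}$ because $v \notin \Mov(f)$; hence $w \in \Fix(f)$, and then $r_v(w) = f(w) = w$ forces $w \in \Fix(r_v) = \langle v \rangle^\perp$. The reverse inclusion is clear. Since $v \notin \Mov(f) = \Fix(f)^\perp$ by \Cref{lemma:fix-move-orthogonal}, the functional $w \mapsto \beta(w, v)$ is not identically zero on $\Fix(f)$, so its kernel $\Fix(f) \cap \langle v \rangle^\perp$ has dimension exactly $\dim \Fix(f) - 1$. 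Therefore $\dim \Mov(r_v f) = \dim \Mov(f) + 1$, and since $\Mov(r_v f) \subseteq \Mov(r_v) + \Mov(f) = \langle v \rangle + \Mov(f)$, whose right-hand side also has dimension $\dim \Mov(f) + 1$, equality gives $\Mov(r_v f) = \Mov(f) \oplus \langle v \rangle$.

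The final assertion is then proved by induction on $k$: for $k = 0$, $\Mov(\id) = \{0\}$ has dimension $0$, and writing $f = r_1(r_2 \dotsm r_k)$, parts (a) and (b) show that passing from $r_2 \dotsm r_k$ to $f$ changes $\dim \Mov$ by exactly $\pm 1$, so $\dim \Mov(f) \equiv \dim \Mov(r_2 \dotsm r_k) + 1 \equiv k \pmod 2$. I expect the one step that really needs an idea is the identification $f_1 = r_v$ in part (a); everything else is routine dimension bookkeeping.
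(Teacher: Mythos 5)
Your proof is correct and follows essentially the same route as the paper: part (a) via the Factorization Theorem applied to $U_1 = \langle v\rangle$ (the paper leaves the identification $f_1 = r_v$ implicit, which you spell out), and part (b) via the identity $\Fix(r_v f) = \Fix(f) \cap \langle v\rangle^\perp$. The only cosmetic difference is that for part (b) you finish with a dimension count and the inclusion $\Mov(r_v f) \subseteq \Mov(f) + \langle v\rangle$, whereas the paper takes orthogonal complements directly using \Cref{lemma:fix-move-orthogonal}; both are equivalent.
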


\begin{proof}
  Part (a) is a direct consequence of \Cref{thm:factorization}.  For
  part (b), suppose that $v \not\in \Mov(f)$.  Since $r_v$ is an
  involution, the fixed space $\Fix(r_v f)$ consists of the vectors $u
  \in V$ such that $r_v(u) = f(u)$.  By substituting the expression
  for $r_v$ (\cref{eq:reflection}), we get the equivalent condition
  \[ u - f(u) = \frac{\beta(u, v)}{Q(v)} v. \]
  Since $v \not\in \Mov(f)$, this condition is satisfied if and only
  if $f(u) = u$ and $\beta(u, v) = 0$.  Therefore $\Fix(r_v f) =
  \Fix(u) \cap \< v\>^\perp$.  Then $\Mov(r_v f) = \Mov(u) \oplus \< v
  \>$ by \Cref{lemma:fix-move-orthogonal}.
\end{proof}

We are now ready to give a simple formula for the reflection length of
any isometry.

\begin{theorem}[Reflection length]\label{thm:reflection-length}
  Assume $\F \neq \F_2$, and let $f \in O(V)$ be an isometry different
  from the identity.  The reflection length of $f$ is equal to
  $\dim\Mov(f)$ if $\Mov(f)$ is not totally singular, and to
  $\dim\Mov(f) + 2$ otherwise.  In particular, every isometry can be
  written as a product of at most $\dim V$ reflections.
\end{theorem}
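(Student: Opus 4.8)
The plan is to prove the two cases separately using the tools already assembled, then deduce the final sentence. Throughout, set $m = \dim\Mov(f)$. The lower bound is free: by the discussion following \Cref{def:direct-factorization}, any reflection factorization $f = r_1 \dotsm r_k$ satisfies $\Mov(f) \subseteq \Mov(r_1) + \dotsb + \Mov(r_k)$, hence $k \geq m$; and by the parity statement at the end of \Cref{lemma:reflection-multiplication}, $k \equiv m \pmod 2$. So when $\Mov(f)$ is totally singular (where we expect the answer $m+2$), the length cannot be $m$ only because of an obstruction to a \emph{direct} factorization of length $m$; but the parity already forces $k \in \{m, m+2, m+4, \dots\}$, and we will need to rule out $k = m$ and exhibit $k = m+2$.

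\textbf{Case 1: $\Mov(f)$ not totally singular.} Here I would construct a direct factorization of length exactly $m$, which by the lower bound is optimal. Since $\chi_f$ is non-degenerate and $\Mov(f)$ is not totally singular, the Wall form $\chi_f$ is not alternating (by \Cref{thm:wall-form}, $\chi_f(u,u) = Q(u)$, and some $u \in \Mov(f)$ has $Q(u) \neq 0$). Apply \Cref{lemma:triangular-basis} to get a basis $e_1, \dots, e_m$ of $\Mov(f)$ with $\chi_f(e_i,e_i) \neq 0$ and $\chi_f(e_i,e_j) = 0$ for $i < j$. Set $U_1 = \langle e_1 \rangle$; then $\chi_1 = \chi_f|_{U_1}$ is non-degenerate, and $U_1^{\triangleright} = \langle e_2, \dots, e_m\rangle$ because the triangularity kills exactly those pairings. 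By \Cref{thm:factorization}, $f = r_{e_1} f_2$ where $\Mov(f_2) = \langle e_2,\dots,e_m\rangle$, $\chi_{f_2} = \chi_f|_{\langle e_2,\dots,e_m\rangle}$, which is again non-degenerate and non-alternating, and the triangular basis restricts. Iterating (formally, induction on $m$) peels off one reflection at a time and yields $f = r_{e_1} r_{e_2} \dotsm r_{e_m}$, a direct factorization of length $m$.

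\textbf{Case 2: $\Mov(f)$ totally singular.} Then $m$ is even (as noted after \Cref{lemma:wall-form-properties}, since a non-degenerate alternating form has even rank), and $f$ cannot have a direct factorization: in a direct factorization $f = r_1 \dotsm r_k$ with $k = m$, each $\Mov(r_i) = \langle v_i \rangle$ is a line inside $\Mov(f)$, forcing $Q(v_i) = 0$, contradicting that reflections are taken with respect to non-singular vectors. So by the lower bound and parity, the reflection length is at least $m+2$. For the upper bound I would pick any non-singular $v \in V$ with $v \notin \Mov(f)$ — such $v$ exists because $\Mov(f)$ is totally singular and proper (if $\Mov(f) = V$ then $V$ is totally singular, impossible for non-degenerate $\beta$ in positive-dimensional... more carefully, a totally singular subspace has dimension at most $\frac12 \dim V$, so certainly $\Mov(f) \neq V$; and $\Mov(f)^{\perp} = \Fix(f)$ is not totally singular for the same dimension reason, so it contains a non-singular vector $v$, which then lies outside $\Mov(f)$). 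By \Cref{lemma:reflection-multiplication}(b), $\Mov(r_v f) = \Mov(f) \oplus \langle v \rangle$, which is \emph{not} totally singular since $Q(v) \neq 0$. By Case 1, $r_v f$ has a direct reflection factorization of length $m + 1$, so $f = r_v \cdot (r_v f)$ is a product of $m + 2$ reflections. This matches the lower bound.

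\textbf{Conclusion and main obstacle.} Finally, for the bound $\dim V$: if $\Mov(f)$ is not totally singular the length is $m \leq \dim V$; if it is totally singular the length is $m + 2$ with $m \leq \frac12\dim V$, so the length is at most $\frac12 \dim V + 2 \leq \dim V$ once $\dim V \geq 4$, and the small-dimensional cases ($\dim V \leq 3$, where a totally singular $\Mov(f)$ of even positive dimension forces $m = 2$ and $\dim V \geq 4$ anyway — so in fact the totally-singular case never produces something exceeding $\dim V$) are handled directly. The main obstacle I anticipate is the bookkeeping in Case 1: verifying that after splitting off $r_{e_1}$ the restricted data $(\langle e_2,\dots,e_m\rangle, \chi_f|_{\langle e_2,\dots,e_m\rangle})$ is genuinely the Wall data of $f_2$ \emph{and} that the tail $e_2, \dots, e_m$ still satisfies the triangularity and non-degeneracy hypotheses needed to continue the induction — this is exactly the content of \Cref{thm:factorization}(a) combined with the observation that subforms of $\chi_f$ along the flag stay non-degenerate because each $\chi_f(e_i,e_i) \neq 0$, but it should be stated carefully rather than waved through.
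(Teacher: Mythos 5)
Your proof is correct and takes essentially the same route as the paper: Case 1 applies \Cref{lemma:triangular-basis} to $\chi_f$ and iterates \Cref{thm:factorization}; Case 2 pre-multiplies by a reflection $r_v$ to make the moved space non–totally-singular and invokes Case 1, with the lower bound ruled out by parity and the non-existence of a singular direct factorization. The only superficial differences are that the paper notes any non-singular $v$ automatically lies outside the totally singular $\Mov(f)$ (so there is no need to search for $v$ in $\Fix(f)$), and it closes the $\dim V$ bound more compactly via $m + 2 \leq 2m \leq 2\cdot(\text{Witt index}) \leq \dim V$.
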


\begin{proof}
  If $\Mov(f)$ is not totally singular, then
  \Cref{lemma:triangular-basis} applies to the Wall form $\chi_f$ and
  yields a basis of $\Mov(f)$ consisting of non-singular vectors $e_1,
  \dotsc, e_m$ such that $\chi(e_i, e_j) = 0$ for $i < j$.  By a
  repeated application of \Cref{thm:factorization}, we get a direct
  factorization $f = r_{e_1} \dotsm r_{e_m}$ of length $m = \dim
  \Mov(f)$.
	
  Suppose now that $\Mov(f)$ is totally singular.  Choose any
  non-singular vector $v \in V$, and consider $g = r_v f$.  By
  \Cref{lemma:reflection-multiplication}, we have $\Mov(g) = \Mov(f)
  \oplus \< v \>$. In particular, $\Mov(g)$ contains the non-singular
  vector $v$, so by the previous part $g$ can be written as a product
  of $\dim\Mov(g)$ reflections.  Then $f$ can be written as a product
  of $\dim\Mov(g) + 1 = \dim\Mov(f) + 2$ reflections.  It is not
  possible to use less than $\dim\Mov(f) + 2$ reflections: a
  factorization into $\dim\Mov(f)$ reflections would be a direct
  factorization, which does not exist because $\Mov(f)$ is totally
  singular; a factorization into $\dim\Mov(f)+1$ reflections does not
  exist by the last part of \Cref{lemma:reflection-multiplication}.
	
  Finally, we want to show that the reflection length is always at
  most $\dim V$.  This is immediate if $\Mov(f)$ is not totally
  singular, so assume now that $\Mov(f)$ is totally singular.  Since
  $\chi_f$ is non-degenerate, we have $\dim\Mov(f) \geq 2$.  On the
  other hand, $\dim\Mov(f)$ is bounded above by the Witt index of
  $\beta$, which is at most $\frac12 \dim V$.  Therefore the
  reflection length is $\dim\Mov(f) + 2 \leq 2 \, \dim \Mov(f) \leq
  \dim V$.
\end{proof}

In the final part of this section, we introduce the spinor norm following \cite[Section 4]{wall1959structure}.
See also \cite{zassenhaus1962spinor, hahn1979unipotent, scharlau2012quadratic}.
Let $\F^\times = \F \setminus \{ 0 \}$.

\begin{definition}[Wall's spinor norm]
  The \emph{spinor norm} is the map $\theta \colon O(V) \to \F^\times
  / (\F^\times)^2$ defined as $\theta(f) = [\det(A)]$, where $A$ is
  the matrix of $\chi_f$ with respect to any basis of $\Mov(f)$.  Here
  $[a]$ indicates the class of $a \in \F^\times$ in the quotient group
  $\F^\times / (\F^\times)^2$.
\end{definition}

Note that the $\det(A) \neq 0$ because $\chi_f$ is non-degenerate, and
$\theta(f)$ does not depend on the choice of the basis.  For example,
we have $\theta(\id) = 1$ and $\theta(r_v) = [Q(v)]$ for every non-singular vector $v\in V$.

\begin{lemma}\label{lemma:spinor-norm-direct-factorization}
  Given a direct factorization $f = f_1 f_2$, we have $\theta(f) =
  \theta(f_1)\theta(f_2)$.
\end{lemma}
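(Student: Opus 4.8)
The plan is to reduce the identity to a block-triangular determinant computation, after using the converse direction of \Cref{thm:factorization} to see exactly how the Wall form of $f$ restricts to the moved spaces of $f_1$ and $f_2$.

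First I would record what a direct factorization gives us: $\Mov(f) = \Mov(f_1) \oplus \Mov(f_2)$ and neither factor is the identity. Put $U_1 = \Mov(f_1)$. Its Wall form $\chi_{f_1}$ is non-degenerate by \Cref{thm:wall-form}, and the last part of \Cref{thm:factorization} gives $\chi_f|_{U_1} = \chi_{f_1}$; in particular $\chi_f|_{U_1}$ is non-degenerate, so \Cref{thm:factorization} applies with this $U_1$ and $U_2 := U_1^{\triangleright}$ (the right orthogonal complement taken inside $\Mov(f)$ with respect to $\chi_f$). It yields $\Mov(f) = U_1 \oplus U_2$ and $f = f_1\, g$, where $g$ is the isometry associated to $(U_2, \chi_f|_{U_2})$ under Wall's parametrization. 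Cancelling $f_1$ gives $g = f_2$, hence $\Mov(f_2) = U_1^{\triangleright}$ and $\chi_{f_2} = \chi_f|_{U_2}$.

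Next I would pick a basis of $\Mov(f)$ adapted to the decomposition: a basis $e_1, \dots, e_p$ of $U_1$ followed by a basis $e_{p+1}, \dots, e_{p+q}$ of $U_2$. With respect to it, the matrix $A$ of $\chi_f$ has the block form
\[
A = \begin{pmatrix} A_1 & 0 \\ C & A_2 \end{pmatrix},
\]
where $A_1$ is the matrix of $\chi_{f_1}$ on $U_1$, $A_2$ is the matrix of $\chi_{f_2}$ on $U_2$, the block $C$ has entries $\chi_f(e_i,e_j)$ with $i>p\geq j$, and the top-right block vanishes because its entries are $\chi_f(e_i, e_j)$ with $e_i \in U_1$ and $e_j \in U_2 = U_1^{\triangleright}$, which are $0$ by definition of the right orthogonal complement. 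Therefore $\det A = \det A_1 \cdot \det A_2$, and passing to classes in $\F^\times/(\F^\times)^2$ gives $\theta(f) = [\det A] = [\det A_1][\det A_2] = \theta(f_1)\theta(f_2)$, since $\theta$ may be computed using any basis of the relevant moved space.

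There is essentially no obstacle; the only point requiring care is to invoke the correctly "handed" version of \Cref{thm:factorization}: because $f_1$ appears on the left in $f = f_1 f_2$, one must take $U_2 = U_1^{\triangleright}$ (not $U_1^{\triangleleft}$), which is what makes the top-right block of $A$ vanish. (Had we used $U_1^{\triangleleft}$ instead, the bottom-left block would vanish and $A$ would be block upper-triangular, with the same conclusion.) A routine induction extends the identity to direct factorizations $f = f_1 \cdots f_k$ with any number of factors, but that is not needed here.
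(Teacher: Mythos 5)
Your proof is correct and is precisely the intended argument: the paper's own proof is the one-line remark that the lemma ``follows immediately from \Cref{thm:factorization},'' and your write-up simply unwinds that statement by identifying $\Mov(f_2) = \Mov(f_1)^{\triangleright}$ with the Wall form of $f$ restricting correctly to both factors, and then reading off $\det A = \det A_1 \cdot \det A_2$ from the block-triangular matrix. The remark about the correct handedness of the orthogonal complement (right versus left, matching the order of the factors) is a good one to make explicit.
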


\begin{proof}
  This follows immediately from \Cref{thm:factorization}.
\end{proof}

\begin{theorem}
  The spinor norm is a group homomorphism.
\end{theorem}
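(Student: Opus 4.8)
The plan is to reduce the statement to a single ``add one reflection at a time'' identity and then iterate. First I would dispose of the trivial case $\F = \F_2$, where $\F^\times/(\F^\times)^2$ is the trivial group and there is nothing to prove; so from now on assume $\F \neq \F_2$. The heart of the argument is the identity
\begin{equation*}
  \theta(r_v g) = [Q(v)]\,\theta(g) \qquad \text{for every non-singular } v \in V \text{ and every } g \in O(V). \tag{$\star$}
\end{equation*}
Once $(\star)$ is available, the theorem follows quickly: by \Cref{thm:reflection-length} any $f \in O(V)$ can be written as $f = r_{v_1} \cdots r_{v_k}$, and applying $(\star)$ repeatedly (with $v = v_i$ and $g = r_{v_{i+1}} \cdots r_{v_k}$) gives $\theta(f) = [Q(v_1)] \cdots [Q(v_k)]$, the empty product being $1 = \theta(\id)$ when $f = \id$. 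Then for $f = r_{v_1}\cdots r_{v_k}$ and $g = r_{w_1}\cdots r_{w_l}$ one has $fg = r_{v_1}\cdots r_{v_k}\, r_{w_1}\cdots r_{w_l}$, so $\theta(fg) = \prod_i [Q(v_i)] \cdot \prod_j [Q(w_j)] = \theta(f)\,\theta(g)$, which is exactly multiplicativity.

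To establish $(\star)$ I would split into the two cases of \Cref{lemma:reflection-multiplication}. If $v \notin \Mov(g)$, then $\Mov(r_v g) = \Mov(g) \oplus \<v\> = \Mov(g) \oplus \Mov(r_v)$, so (assuming $g \neq \id$) $r_v g = r_v \cdot g$ is a direct factorization in the sense of \Cref{def:direct-factorization}, and \Cref{lemma:spinor-norm-direct-factorization} gives $\theta(r_v g) = \theta(r_v)\,\theta(g) = [Q(v)]\,\theta(g)$; the degenerate case $g = \id$ is immediate since $\theta(r_v) = [Q(v)]$. If instead $v \in \Mov(g)$, then by \Cref{lemma:reflection-multiplication} the moved space $\Mov(r_v g)$ equals the right orthogonal complement $\<v\>^{\triangleright}$ computed inside $\Mov(g)$ with respect to $\chi_g$; since $v$ is non-singular, $\chi_g(v,v) = Q(v) \neq 0$ by \Cref{thm:wall-form}, so $v$ is not right-orthogonal to itself and hence $v \notin \Mov(r_v g)$. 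Thus the first case applies to the isometry $h := r_v g$ together with $v$, yielding $\theta(g) = \theta(r_v h) = [Q(v)]\,\theta(h) = [Q(v)]\,\theta(r_v g)$; multiplying both sides by $[Q(v)]$ and using $[Q(v)]^2 = [Q(v)^2] = 1$ gives $[Q(v)]\,\theta(g) = \theta(r_v g)$, completing $(\star)$.

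I expect the only real obstacle to be the case $v \in \Mov(g)$ of $(\star)$: there the product $r_v g$ is \emph{not} a direct factorization of $r_v$ and $g$, so one cannot invoke \Cref{lemma:spinor-norm-direct-factorization} head-on. The resolution is the observation above — multiplying by $r_v$ on the ``short'' side always cancels a reflection and therefore produces the direct case, precisely because a non-singular vector cannot be isotropic for the Wall form. Everything else is routine bookkeeping; as an alternative one could phrase the whole argument as an induction on reflection length, applying $(\star)$ only with $g$ of strictly smaller reflection length.
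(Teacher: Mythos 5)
Your proposal is correct and takes essentially the same route as the paper. The paper's proof proceeds by induction on $k$ for a reflection factorization $f = r_1\cdots r_k$: setting $g = r_1 f = r_2\cdots r_k$, it observes that either $f = r_1 g$ is a direct factorization (so \Cref{lemma:spinor-norm-direct-factorization} applies directly), or else $g = r_1 f$ is a direct factorization (by \Cref{lemma:reflection-multiplication}), and then uses the fact that every nontrivial element of $\F^\times/(\F^\times)^2$ has order $2$ to conclude $\theta(f) = \theta(r_1)\theta(g)$ in both cases. Your identity $(\star)$ is exactly this inductive step, and your treatment of the case $v \in \Mov(g)$ --- bootstrapping from the other case by noting that $\chi_g(v,v) = Q(v) \neq 0$ forces $v \notin \Mov(r_v g)$, then dividing by $[Q(v)]$ --- is the same observation the paper makes, read in the opposite direction. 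The only organizational difference is that you isolate the one-step identity $(\star)$ before iterating, whereas the paper folds it into the induction; you also handle the degenerate case $g = \id$ explicitly, which the paper glosses over slightly (for $g = \id$, $f = r_1 g$ is not a direct factorization in the sense of \Cref{def:direct-factorization} because $g$ is the identity, and $g = r_1 f = \id$ is not a direct factorization either; the equality $\theta(f) = \theta(r_1)\theta(g)$ is nonetheless trivial). That small extra care is a mild improvement, but the mathematical content and the ingredients used are the same.
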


\begin{proof}
  If $\F = \F_2$, the spinor norm is trivial, so we can assume from now
  on that $\F\neq \F_2$.  Then $O(V)$ is generated by reflections by
  \Cref{thm:reflection-length}.  Therefore it is enough to show that,
  for every factorization $f = r_1 \dotsm r_k$ into reflections, we
  have $\theta(f) = \theta(r_1) \dotsm \theta(r_k)$.  We prove this by
  induction on $k$, the cases $k=0$ and $k=1$ being trivial.
	
  Fix a length $k$ reflection factorization $f = r_1 \dotsm r_k$ with
  $k \geq 2$. Let $g = r_1 f = r_2 \dotsm r_k$.  If $f = r_1 g$ is a
  direct factorization, then $\theta(f) = \theta(r_1) \theta(g)$ by
  \Cref{lemma:spinor-norm-direct-factorization}.  If $f = r_1 g$ is
  not a direct factorization, then $g = r_1f$ is a direct
  factorization by \Cref{lemma:reflection-multiplication}, and $\theta(g) = \theta(r_1) \theta(f)$ by
  \Cref{lemma:spinor-norm-direct-factorization}.  Since all
  non-trivial elements of $\F^\times / (\F^\times)^2$ have order $2$,
  we have $\theta(f) = \theta(r_1) \theta(g)$ in both cases.  By
  induction, $\theta(g) = \theta(r_2) \dotsm \theta(r_k)$ and
  thus $\theta(f) = \theta(r_1) \theta(g) = \theta(r_1) \dotsm
  \theta(r_k)$.
\end{proof}

\section{Partial order on the orthogonal group}
\label{sec:partial-order}

In this section, we introduce the partial order on $O(V)$ naturally
induced by minimal reflection factorizations. It generalizes the
partial order of \cite{brady2002partial}.  We show that for most
isometries $f \in O(V)$, the interval $[\id,f]$ naturally includes into
the poset (i.e., partially ordered set) of subspaces of $\Mov(f)$.  We
assume throughout this section that $\F \neq \F_2$, so that
\Cref{thm:reflection-length} applies.

\begin{definition}[Partial order on $O(V)$]\label{def:partial-order}
  Given two isometries $f, g \in O(V)$, define $g \leq f$ if and only
  if $f$ admits a minimal length reflection factorization that starts
  with a minimal length reflection factorization of $g$.
  Equivalently, $g \leq f$ if and only if $l(f) = l(g) + l(g^{-1}f)$,
  where $l \colon O(V) \to \N$ denotes the reflection length.
\end{definition}

Since the set of reflections is closed under conjugation, it is
equivalent to require that $f$ admits a minimal factorization that
\emph{ends} with a minimal factorization of $g$.  Notice that $O(V)$
is ranked (in the sense of posets) by the reflection length $l$, and
it has the identity as the unique $\leq$-minimal element.  This
partial order was studied in \cite{brady2002partial} for isometries of
an anisotropic bilinear form $\beta$, and in \cite{brady2015factoring}
for isometries of the affine Euclidean space.

Although the global combinatorics of $O(V)$ is complicated, most of
the intervals \[ [g, f] = \{ h \in O(V) \mid g \leq h \leq f \} \quad
\text{for $g \leq f$} \] have a structure that we can explicitly describe.
Notice that the interval $[g, f]$ is isomorphic (as a poset) to the interval $[\id, g^{-1}f]$ via the isomorphism $h \mapsto g^{-1} h$.  Therefore, the
combinatorial study of all intervals in $O(V)$ reduces to the study of
the intervals of the form $[\id, f]$.

Recall from \Cref{sec:factorizations} that the reflection length of an
isometry $f \in O(V)$ is at least $\dim\Mov(f)$, and the reflection
factorizations of length $\dim\Mov(f)$ (if they exist) are the direct
factorizations.  In light of \Cref{thm:reflection-length}, we can
characterize in a couple of different ways the isometries $f$ with
reflection length equal to $\dim\Mov(f)$.

\begin{definition}\label{def:minimal-isometry}
  An isometry $f \in O(V)$ is \emph{minimal}
  if any of the following equivalent conditions hold:
  \begin{enumerate}[(i)]
  \item $f$ admits a direct factorization as a product of reflections;
  \item its reflection length is equal to $\dim \Mov(f)$;
  \item $f = \id$, or $\Mov(f)$ is not totally singular.
  \end{enumerate}
\end{definition}

Roughly speaking, condition (iii) tells us that most isometries are
minimal.  There are many simple sufficient conditions for an isometry
to be minimal: if $\dim \Mov(f) > \frac12 \dim V$, then $f$ is
minimal; if $\dim\Mov(f)$ is odd, then $f$ is minimal (because all
alternating forms are degenerate, so $\chi_f$ is not alternating); if
$V$ contains no singular vectors, then all isometries are minimal.

\begin{remark}
  If the characteristic of $\F$ is not $2$, there are several
  additional conditions equivalent to \Cref{def:minimal-isometry}.  In
  fact, the moved space $\Mov(f)$ is totally singular if and only if
  $\beta$ vanishes on $\Mov(f)$, which happens if and only if the Wall
  form $\chi_f$ is skew-symmetric (by property (i) of
  \Cref{lemma:wall-form-properties}).  In addition, it is noted in
  \cite[Corollary 6.3]{grove2002classical} that $\Mov(f)$ is totally
  singular if and only if $(f - \id)^2 = 0$ (i.e., the unipotency index
  of $f$ is $2$), or equivalently $\Mov(f) \subseteq \Fix(f)$.
  See also \cite{nokhodkar2017applications}.
\end{remark}

In what follows, we aim to describe the combinatorics of the
interval $[\id,f]$ associated with a minimal isometry $f$.

\begin{lemma}\label{lemma:isometries-below-f}
  Let $f \in O(V)$ be a minimal isometry, and let $g \leq f$. Then:
  \begin{enumerate}[(a)]
  \item $\Mov(g) \subseteq \Mov(f)$;
  \item $g$ is minimal;
  \item $\chi_g$ is the restriction of $\chi_f$ to $\Mov(g)$.
  \end{enumerate}
\end{lemma}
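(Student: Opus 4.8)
The plan is to convert the relation $g \leq f$ into an explicit \emph{direct} reflection factorization of $f$, and then read off (a), (b), (c) from a dimension count together with the converse half of \Cref{thm:factorization}. First I would unwind \Cref{def:partial-order}: since $g \leq f$, there is a reflection factorization $f = r_1 r_2 \dotsm r_k$ with $k = l(f)$ whose initial segment $r_1 \dotsm r_j$ is a minimal length factorization of $g$, so that $g = r_1 \dotsm r_j$ and $j = l(g)$. Because $f$ is minimal, $k = l(f) = \dim\Mov(f)$ by \Cref{def:minimal-isometry}, so this factorization of $f$ attains the lower bound $l(f) \geq \dim\Mov(f)$ and hence is a direct factorization: $\Mov(f) = \Mov(r_1) \oplus \dotsb \oplus \Mov(r_k)$. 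In particular each $\Mov(r_i)$ is contained in $\Mov(f)$.

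Next I would prove (a) and (b) simultaneously by a dimension count. From $g = r_1 \dotsm r_j$ we get $\Mov(g) \subseteq \Mov(r_1) + \dotsb + \Mov(r_j)$, a subspace of $\Mov(f)$ of dimension $j$; this already gives (a). Set $h = g^{-1}f = r_{j+1} \dotsm r_k$; likewise $\Mov(h) \subseteq \Mov(r_{j+1}) + \dotsb + \Mov(r_k) \subseteq \Mov(f)$, of dimension at most $k-j$. Using $f = gh$ with the general inclusion $\Mov(gh) \subseteq \Mov(g) + \Mov(h)$, together with the reverse inclusion just established, we get $\Mov(f) = \Mov(g) + \Mov(h)$, so $\[ k = \dim\Mov(f) \leq \dim\Mov(g) + \dim\Mov(h) \leq j + (k-j) = k. \]$ Hence equality holds throughout: $\dim\Mov(g) = j = l(g)$, which by \Cref{def:minimal-isometry} says that $g$ is minimal, proving (b); moreover $\dim\Mov(h) = k-j$ and the sum $\Mov(f) = \Mov(g) \oplus \Mov(h)$ is direct.

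Finally, for (c) I would invoke the converse part of \Cref{thm:factorization}. The equality $f = g \cdot h$ is a direct factorization with $\Mov(f) = \Mov(g) \oplus \Mov(h)$, so it arises from the construction of \Cref{thm:factorization} with $U_1 = \Mov(g)$, $U_2 = \Mov(h) = U_1^{\triangleright}$, and Wall forms $\chi_1 = \chi_f|_{U_1}$, $\chi_2 = \chi_f|_{U_2}$; in particular $\chi_g = \chi_f|_{\Mov(g)}$, which is (c). Should one prefer to rely only on the identity $\chi_{f_2} = \chi_f|_{\Mov(f_2)}$ that is spelled out in the proof of \Cref{thm:factorization}, one can apply it to the factorization $f^{-1} = h^{-1} g^{-1}$, whose right-hand factor is $g^{-1}$, and then translate back using \Cref{lemma:wall-form-properties}(iii).

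The rest is bookkeeping, and the one point that takes any care is the very first step: the hypothesis $g \leq f$ provides only a reflection factorization of $f$ beginning with a minimal one of $g$, not a priori a direct factorization, and it is the minimality of $f$ — not of $g$ — that forces the whole factorization to be direct, after which the dimension count and \Cref{thm:factorization} make (a)--(c) formal.
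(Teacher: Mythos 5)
Correct, and in essentially the same spirit as the paper: minimality of $f$ forces any reflection factorization of $f$ witnessing $g \leq f$ to be direct, after which (a)--(b) follow by dimension counting and (c) from Theorem~\ref{thm:factorization}. The one notable difference is in (c), where the paper peels off the last reflection $r_k$ and inducts on $k = \dim\Mov(f)$, while you apply the converse clause of Theorem~\ref{thm:factorization} in a single step to the direct factorization $f = g\cdot(g^{-1}f)$; your fallback via $f^{-1} = h^{-1}g^{-1}$ and Lemma~\ref{lemma:wall-form-properties}(iii) correctly handles the fact that the paper's written proof of that converse only spells out the identity for the \emph{second} factor.
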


\begin{proof}
  Let $k = \dim \Mov(f)$.  Since $f$ is minimal, its reflection length
  is equal to $k$, and $\Mov(f) = \Mov(r_1) \oplus
  \dotsb \oplus \Mov(r_k)$ for every minimal length factorization
  $f=r_1\dotsm r_k$ of $f$ as a product of reflections.  Then there is
  one such factorization for which $g = r_1 \dotsm r_m$ for some $m
  \leq k$, and the reflection length of $g$ is equal to $m$.  By a
  repeated application of part (b) of
  \Cref{lemma:reflection-multiplication}, we get that $\Mov(g) =
  \Mov(r_1) \oplus \dotsb \oplus \Mov(r_m) \subseteq
  \Mov(f)$.  In addition, the reflection factorization $g = r_1\dotsm
  r_m$ is a direct factorization, so $g$ is minimal.
    
  If $g = f$, then $\chi_g = \chi_f$ and we are done.  Suppose now that
  $g \neq f$, i.e., $m < k$.  Since $\Mov(r_k)$ is 1-dimensional, the
  property $\chi(u,u) = Q(u)$ (\Cref{thm:wall-form}) implies that
  $\chi_{r_k}$ is the restriction of $\chi_f$ to $\Mov(r_k)$.  By
  \Cref{thm:factorization}, $\chi_{r_1\dotsm r_{k-1}}$ is the
  restriction of $\chi_f$ to $\Mov(r_1\dotsm r_{k-1})$.  Now, $f' :=
  r_1\dotsm r_{k-1}$ is minimal by part (b), and $g \leq f'$, so we
  are done by induction on $k$.
\end{proof}

In the full group $O(V)$, there can be many isometries with the same
moved space.  However, once we restrict to an interval $[\id, f]$ where $f$ is minimal, an isometry is completely determined by its moved space.

\begin{theorem}[Minimal intervals]\label{thm:intervals}
  Let $f \in O(V)$ be a minimal isometry.  Then $g \mapsto \Mov(g)$ is
  an order-preserving bijection between the interval $[\id,f]$ and the
  poset of linear subspaces $U \subseteq \Mov(f)$ that satisfy the
  following conditions:
  \begin{enumerate}[(i)]
  \item $U = \{0\}$ or $U$ is not totally singular;
  \item $U^\triangleright = \{0\}$ or $U^\triangleright$ is not
    totally singular;
  \item $\chi_f|_U$ is non-degenerate.
  \end{enumerate}
  In addition, the rank of $g \in [\id,f]$ is equal to $\dim\Mov(g)$.
\end{theorem}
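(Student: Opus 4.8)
The plan is to show that $g \mapsto \Mov(g)$ is a well-defined injection of $[\id,f]$ into the poset of subspaces $U \subseteq \Mov(f)$ satisfying (i)--(iii), that it is order-preserving with order-preserving inverse, and that it is surjective onto that poset. Injectivity and the compatibility of $\Mov$ with $\chi_f$ come essentially for free from \Cref{lemma:isometries-below-f}: if $g \leq f$ then $\chi_g = \chi_f|_{\Mov(g)}$, so by Wall's parametrization (\Cref{thm:wall-parametrization}) the isometry $g$ is recovered from the single subspace $\Mov(g)$ together with the ambient form $\chi_f$; hence two elements of $[\id,f]$ with the same moved space coincide. For well-definedness, suppose $g \leq f$ and set $U = \Mov(g)$. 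Condition (iii) is immediate from \Cref{lemma:isometries-below-f}(c) (the Wall form $\chi_g = \chi_f|_U$ is non-degenerate). Condition (i) is exactly minimality of $g$ via \Cref{def:minimal-isometry}(iii), which holds by \Cref{lemma:isometries-below-f}(b). Condition (ii) requires looking at the complementary factor: by \Cref{thm:factorization} applied to $f$ with $U_1 = U$ (legal since $\chi_f|_U$ is non-degenerate) and $U_2 = U^\triangleright$, we get $\Mov(f) = U \oplus U^\triangleright$ and a direct factorization $f = g f_2$ with $\Mov(f_2) = U^\triangleright$; then $f_2 = g^{-1} f$, and since $g \leq f$ we have $l(f) = l(g) + l(g^{-1} f)$, forcing $g^{-1}f = f_2$ to be minimal, so $U^\triangleright$ satisfies (i), which is (ii).

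Conversely, given $U \subseteq \Mov(f)$ satisfying (i)--(iii), I would produce $g \in [\id,f]$ with $\Mov(g) = U$. Since $\chi_f|_U$ is non-degenerate, \Cref{thm:factorization} gives a direct factorization $f = g f_2$ where $g$ corresponds to $(U, \chi_f|_U)$ and $f_2$ to $(U^\triangleright, \chi_f|_{U^\triangleright})$ under Wall's parametrization, with $\Mov(f) = U \oplus U^\triangleright$. By (i) the isometry $g$ is minimal (or trivial), so it has a direct reflection factorization $g = r_1 \dotsm r_m$ with $m = \dim U$; by (ii) the isometry $f_2$ is minimal (or trivial), so it has a direct reflection factorization $f_2 = r_{m+1}\dotsm r_k$ with $k - m = \dim U^\triangleright$. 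Concatenating, $f = r_1 \dotsm r_k$ is a reflection factorization, and it is direct because $\Mov(f) = U \oplus U^\triangleright$ is the direct sum of the one-dimensional spaces $\Mov(r_i)$; hence $k = \dim\Mov(f) = l(f)$ and this realizes $g = r_1\dotsm r_m$ as an initial segment of a minimal factorization of $f$, i.e., $g \leq f$. Thus $\Mov$ hits every such $U$.

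It remains to check that the bijection and its inverse are order-preserving, and the final clause about ranks. If $g \leq h \leq f$ in $[\id,f]$, then \Cref{lemma:isometries-below-f}(a) (applied with $h$ in place of $f$, noting $h$ is minimal by part (b)) gives $\Mov(g) \subseteq \Mov(h)$, so $\Mov$ is order-preserving. For the reverse direction, suppose $g, h \in [\id,f]$ with $\Mov(g) \subseteq \Mov(h)$; since $h$ is minimal with $\chi_h = \chi_f|_{\Mov(h)}$, and $\Mov(g)$ together with $\chi_f|_{\Mov(g)}$ satisfies conditions (i)--(iii) relative to the ambient form $\chi_h$, the surjectivity argument applied inside $[\id,h]$ produces an element of $[\id,h]$ with moved space $\Mov(g)$, which by injectivity must be $g$ itself; hence $g \leq h$. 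Finally, for $g \in [\id,f]$ the rank is $l(g)$, which equals $\dim\Mov(g)$ by minimality of $g$ (\Cref{def:minimal-isometry}(ii)). The only mildly delicate point is the reverse-monotonicity step, where one must be careful that the conditions (i)--(iii) for $U = \Mov(g)$ as a subspace of $\Mov(h)$ coincide with those as a subspace of $\Mov(f)$ — but since $\chi_h = \chi_f|_{\Mov(h)}$, left/right orthogonal complements of $U$ inside $\Mov(h)$ are simply the intersections with $\Mov(h)$ of those inside $\Mov(f)$, and the relevant complement $U^\triangleright$ (computed in $\Mov(f)$) lands inside $\Mov(h)$ precisely because $\Mov(h) = U \oplus (U^\triangleright \cap \Mov(h))$ by non-degeneracy; I expect this bookkeeping to be the main obstacle, though it is routine.
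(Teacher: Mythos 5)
The first three paragraphs of your proposal — well-definedness of the map (i.e., that $\Mov(g)$ satisfies (i)--(iii) whenever $g \in [\id,f]$), injectivity via Wall's parametrization, surjectivity via the factorization theorem, order-preservation of $\Mov$, and the rank formula — are all correct and take essentially the same route as the paper's proof. In particular your argument for (ii) via the complementary factor $g^{-1}f$ and your surjectivity argument via direct reflection factorizations of $g$ and $f_2$ are exactly what the paper does.

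However, your final paragraph, which argues that the \emph{inverse} map is order-preserving (i.e.\ $\Mov(g) \subseteq \Mov(h)$ implies $g \leq h$), proves something that is false, and the point you flag as ``routine bookkeeping'' is precisely where the argument breaks. The theorem only claims an order-preserving \emph{bijection}, not a poset isomorphism, and the paper explicitly constructs a counterexample immediately after the theorem. The issue is condition (ii): if $U = \Mov(g) \subseteq \Mov(h)$, the right orthogonal complement of $U$ computed inside $\Mov(h)$ with respect to $\chi_h = \chi_f|_{\Mov(h)}$ is $U^\triangleright \cap \Mov(h)$ (where $U^\triangleright$ is taken in $\Mov(f)$), and this smaller subspace can perfectly well be totally singular even when $U^\triangleright$ is not. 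Concretely, in the $4 \times 4$ example following \Cref{thm:intervals}, take $U = \langle e_1\rangle \subset U' = \langle e_1, e_2, e_3\rangle$: both satisfy (i)--(iii) relative to $\chi_f$, so both arise as moved spaces of elements $g, g'$ of $[\id, f]$, yet the complement of $U$ inside $U'$ is the totally singular plane $\langle e_2, e_3\rangle$, so condition (ii) fails inside $\Mov(g')$ and $g \not\leq g'$. Delete the reverse-monotonicity paragraph; everything else stands.
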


\begin{proof}
  Let $g \in [\id,f]$, and let $U = \Mov(g)$. We have that $g$ is
  minimal by \Cref{lemma:isometries-below-f}, so $U$ satisfies
  condition (i).  In addition, we have $U^\triangleright = \Mov(g^{-1}f)$ by
  \Cref{thm:factorization}, and $g^{-1}f \in [\id,f]$ is also minimal,
  so condition (ii) is satisfied.  Finally, condition (iii) is a
  consequence of \Cref{thm:factorization}.
	
  We now explicitly construct the inverse map $\phi$.  Suppose that $U
  \subseteq \Mov(f)$ satisfies all three conditions.  By
  \Cref{thm:factorization} and condition (iii), there is a direct
  factorization $f = f_1f_2$ where $f_1$ is the isometry associated
  with $(U, \chi_f|_U)$.  By conditions (i) and (ii), both $f_1$ and
  $f_2$ are minimal.  Then their reflection lengths are
  $\dim\Mov(f_1)$ and $\dim\Mov(f_2)$, which add up to $\dim\Mov(f)$.
  Therefore $f_1 \in [\id,f]$.  Define $\phi(U) = f_1$.
	
  We now check that $\phi$ is indeed the inverse of $\Mov$.  For any
  isometry $g \in [\id,f]$, we have that $g' = \phi(\Mov(g))$ is an
  isometry such that $\Mov(g') = \Mov(g)$, and $\chi_{g'} =
  \chi_f|_{\Mov(g)}$.  By \Cref{lemma:isometries-below-f}, we also
  have that $\chi_g = \chi_f|_{\Mov(g)}$.  This means that $g'$ and
  $g$ have the same moved space and the same Wall form, so $g' = g$ by
  \Cref{thm:wall-parametrization}.  In addition, for any subspace $U
  \subseteq \Mov(f)$ satisfying conditions (i)-(iii), we have that
  $\Mov(\phi(U)) = U$ by construction of $\phi$.
	
  If $g \leq g'$ in $[\id,f]$, then $g'$ is minimal by part (b) of
  \Cref{lemma:isometries-below-f}, and $\Mov(g) \subseteq \Mov(g')$ by
  part (a) of \Cref{lemma:isometries-below-f}.  This means that the
  bijection $g \mapsto \Mov(g)$ is order-preserving.  Finally, the
  rank of an isometry $g$ in $[\id,f]$ is given by its reflection
  length, which is equal to $\dim\Mov(g)$ because $g$ is minimal.
\end{proof}

For every $U \subseteq \Mov(f)$, we have that $U^\triangleleft =
f(U^\triangleright)$ by property (ii) of
\Cref{lemma:wall-form-properties}, so $U^\triangleleft$ and
$U^\triangleright$ are isometric.  In particular, $U^\triangleleft$ is
totally singular if and only if $U^\triangleright$ is totally
singular, and this gives an equivalent way to write condition (ii) of
\Cref{thm:intervals}.  Note that condition (ii) is not redundant, due
to the following example.

\begin{example}\label{example:direct-factorization}
  Consider an isometry $f$ with a $3$-dimensional moved space and a
  Wall form given by the following matrix, with respect to some basis
  $e_1, e_2, e_3$ of $\Mov(f)$:
  \[ \begin{mymatrix}{ccc}
    1 & 0 & 0 \\
    0 & 0 & 1 \\
    0 & -1 & 0
  \end{mymatrix}. \]
  If $U_1 = \< e_1 \>$ and $U_2 = U_1^\triangleright = \< e_2, e_3
  \>$, then \Cref{thm:factorization} yields a direct factorization $f
  = f_1 f_2$ such that $\chi_{f_1} = \chi_f|_{U_1}$ is not
  alternating, whereas $\chi_{f_2} = \chi_f|_{U_2}$ is alternating.  Then $f_1$ is
  minimal, and $f_2$ is not.  As a consequence, we have $f_1 \not\leq
  f$ despite the inclusion $\Mov(f_1) \subseteq \Mov(f)$.
\end{example}

Notice that the bijection $g \mapsto \Mov(g)$ of \Cref{thm:intervals}
is not a poset isomorphism.  Indeed, it is possible to have elements
$g, g' \in [\id,f]$ with $g \not\leq g'$ but $\Mov(g) \subseteq
\Mov(g')$.  We construct such a case in the following example.

\begin{example}
  Consider an isometry $f$ with a $4$-dimensional moved space and a
  Wall form given by the following matrix, with respect to some basis
  $e_1, e_2, e_3, e_4$ of $\Mov(f)$:
  \[ \begin{mymatrix}{cccc}
    1 & 0 & 0 & 0 \\
    0 & 0 & 1 & 0 \\
    0 & -1 & 0 & 0 \\
    0 & 0 & 0 & 1
  \end{mymatrix}. \]
  By \Cref{thm:intervals}, the subspaces $U = \< e_1 \>$ and $U' = \<
  e_1, e_2, e_3 \>$ have associated isometries $g, g' \in [\id,f]$
  with $\Mov(g) = U$ and $\Mov(g') = U'$.  Then $\Mov(g) \subseteq
  \Mov(g')$, but $g \not\leq g'$ as seen in
  \Cref{example:direct-factorization}.
\end{example}

In the case where the bilinear form $\beta$ is anisotropic, we recover
the description of the intervals in $O(V)$ given in
\cite{brady2002partial}.
In fact, the same description is obtained in the more general setting where $V$ contains no singular vectors.

\begin{corollary}\label{cor:anisotropic-intervals}
  Suppose that $V$ contains no singular vectors, and let $f \in O(V)$ be any isometry.
  Then $f$ is minimal, and $g \mapsto \Mov(g)$ is
  an isomorphism between the interval $[\id,f]$ and the
  poset of all linear subspaces $U \subseteq \Mov(f)$.
\end{corollary}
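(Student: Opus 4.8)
The plan is to derive \Cref{cor:anisotropic-intervals} as an immediate consequence of \Cref{thm:intervals}. The whole point is that when $V$ contains no singular vectors, conditions (i)--(iii) of \Cref{thm:intervals} become vacuous, so the bijection $g \mapsto \Mov(g)$ is actually onto the full subspace lattice of $\Mov(f)$, and moreover it is a poset isomorphism rather than merely an order-preserving bijection.

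First I would observe that if $V$ has no singular vectors, then no nonzero subspace $U \subseteq V$ is totally singular, so in particular $f$ itself is minimal by condition (iii) of \Cref{def:minimal-isometry}, and \Cref{thm:intervals} applies to $f$. Next I would check conditions (i)--(iii) for an arbitrary subspace $U \subseteq \Mov(f)$: condition (i) holds because $U$ is either $\{0\}$ or not totally singular; condition (ii) holds for the same reason applied to $U^\triangleright$; and condition (iii), that $\chi_f|_U$ is non-degenerate, follows because if it were degenerate there would be a nonzero $u \in U$ with $\chi_f(u,u) = Q(u) = 0$ (using $\chi_f(u',u') = Q(u')$ from \Cref{thm:wall-form}) — wait, more carefully: degeneracy of $\chi_f|_U$ gives a nonzero $u$ left-orthogonal (or right-orthogonal) to all of $U$, in particular $\chi_f(u,u)=0$, hence $Q(u)=0$, contradicting the absence of singular vectors. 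So every subspace of $\Mov(f)$ is in the image, and $g \mapsto \Mov(g)$ is a bijection onto the full subspace lattice.

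It remains to upgrade this order-preserving bijection to a poset isomorphism, i.e.\ to show $\Mov(g) \subseteq \Mov(g')$ implies $g \leq g'$ for $g, g' \in [\id, f]$. Here I would use the inverse map $\phi$ from the proof of \Cref{thm:intervals} together with the factorization machinery. Given $U = \Mov(g) \subseteq U' = \Mov(g')$, by condition (iii) (automatic here) the restriction $\chi_{g'} = \chi_f|_{U'}$ is non-degenerate on $U'$, and $\chi_{g'}|_U = \chi_f|_U$ is also non-degenerate, so \Cref{thm:factorization} applied to $g'$ with the subspace $U_1 = U$ yields a direct factorization $g' = g_1 g_2$ where $g_1$ is the isometry associated with $(U, \chi_f|_U)$ — but that isometry is exactly $g$ by \Cref{thm:wall-parametrization}, since $g$ and $g_1$ share the same moved space $U$ and the same Wall form $\chi_f|_U$ (using part (c) of \Cref{lemma:isometries-below-f}). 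Since $g'$ is minimal (no totally singular subspaces) and $g_2 = g^{-1}g'$ is therefore also minimal, this direct factorization realizes $g \leq g'$ in the reflection-length order. The analogous argument with the chain $\id \leq g \leq g'$ handled via \Cref{thm:factorization} confirms that the interval inclusions are exactly subspace inclusions.

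I do not expect any genuine obstacle here; the corollary is a clean specialization. The only point requiring a modicum of care is the verification that $\chi_f|_U$ is automatically non-degenerate (condition (iii)) — one must remember that degeneracy of a not-necessarily-symmetric bilinear form produces a vector orthogonal to everything \emph{on one side}, but since that vector is in particular orthogonal to itself, $\chi_f(u,u) = Q(u) = 0$ still follows, and the hypothesis rules this out. Everything else is a direct invocation of \Cref{thm:intervals} and \Cref{thm:factorization}.
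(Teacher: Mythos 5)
Your proposal is correct and follows essentially the same path as the paper's proof: minimality of $f$ from the absence of totally singular subspaces, verification that conditions (i)--(iii) of \Cref{thm:intervals} hold automatically (with the same observation that a vector in the radical of $\chi_f|_U$ would satisfy $Q(u)=\chi_f(u,u)=0$), and then upgrading to a poset isomorphism by showing $g'=g\cdot(g^{-1}g')$ is a direct factorization via \Cref{thm:factorization} and \Cref{lemma:isometries-below-f}(c). You spell out the identification of the factor with $g$ via Wall's parametrization a bit more explicitly than the paper does, which is a welcome clarification but not a different argument.
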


\begin{proof}
  We already noted that every isometry $f$ is minimal if $V$ contains no singular vectors.
  To prove that $g \mapsto \Mov(g)$ is an order-preserving bijection, it is enough to apply \Cref{thm:intervals} and show that conditions (i)--(iii) are satisfied by every subspace $U \subseteq \Mov(f)$.
  Conditions (i) and (ii) are trivially satisfied because $\{0\}$ is the only totally singular subspace of $V$.
  For condition (iii), $\chi_f(u, u) = Q(u) \neq 0$ for any non-zero vector $u \in U$, so $\chi_f|_U$ is non-degenerate.
  To conclude the proof, we need to show that $\Mov(g) \subseteq \Mov(g')$ implies $g \leq g'$ for every $g,g' \in [\id, f]$.
  If we define $h = g^{-1}g'$, we obtain that $g' = gh$ is a direct factorization by \Cref{thm:factorization}.
  Since $h$ is minimal, we deduce that $l(g') = l(g) + l(h)$ and therefore $g \leq g'$.
\end{proof}

In the last part of this section, we turn our attention to non-minimal isometries, which behave in a substantially different way.

\begin{theorem}
  \label{thm:non-minimal-isometries}
  Let $f \in O(V)$ be a non-minimal isometry.
  \begin{enumerate}[(a)]
  \item For every reflection $r \in O(V)$, we have $r < f$ and $rf < f$.
  \item Every isometry $g < f$ is minimal.
  \item $f$ is $\leq$-maximal in $O(V)$.
  \end{enumerate}
\end{theorem}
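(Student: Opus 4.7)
My plan is to exploit the two structural facts about a non-minimal $f$: its moved space $\Mov(f)$ is totally singular and nonzero, and by \Cref{thm:reflection-length} its reflection length is $k+2$ where $k = \dim\Mov(f) \geq 2$. The key observation driving everything is that if $r = r_v$ is any reflection, the vector $v$ must be non-singular and therefore cannot lie in the totally singular subspace $\Mov(f)$; part (b) of \Cref{lemma:reflection-multiplication} then gives $\Mov(rf) = \Mov(f) \oplus \<v\>$ of dimension $k+1$, and this space contains the non-singular vector $v$, so it is not totally singular. Hence $rf$ is minimal and $l(rf) = k+1 = l(f) - 1$.

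For part (a), the identity $l(rf) = l(f)-1$ gives $l(f) = l(r) + l(r^{-1}f)$ since $r^{-1}=r$, so $r \leq f$, and $r \neq f$ because $l(f) \geq 4$. Symmetrically, $(rf)^{-1}f = f^{-1}rf$ is a reflection (reflections are closed under conjugation), so $l(rf) + l((rf)^{-1}f) = (k+1)+1 = l(f)$, yielding $rf \leq f$; strictness follows from $rf \neq f$.

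For part (b), take $g < f$ and fix a minimal factorization of $f$ whose prefix $r_1\dotsm r_m$ is a minimal factorization of $g$, with $m < k+2$. Set $h = r_{m+1}\dotsm r_{k+2}$, so $f = gh$, $l(g) = m$, $l(h) = k+2-m$. Suppose for contradiction that $g$ is non-minimal; then $\dim\Mov(g) = m - 2$ by \Cref{thm:reflection-length}. Using the inclusion $\Mov(f) \subseteq \Mov(g) + \Mov(h)$ together with $\dim\Mov(h) \leq l(h) = k+2-m$, I will show $k \leq (m-2) + (k+2-m) = k$, forcing equalities throughout: $\Mov(f) = \Mov(g) \oplus \Mov(h)$ and $\dim\Mov(h) = l(h)$, so $h$ is minimal. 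But $\Mov(h) \subseteq \Mov(f)$ is totally singular, so $h$ minimal forces $\Mov(h) = \{0\}$, i.e., $h = \id$ and $g = f$, contradicting $g < f$.

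For part (c), suppose $f < f'$ and write $f' = f h$ with $l(f') = l(f) + l(h)$ and a minimal factorization $h = s_1\dotsm s_j$, $j \geq 1$. Writing $f' = (fs_1)\, s_2\dotsm s_j$ gives $l(fs_1) \geq l(f') - (j-1) = l(f) + 1$. However, $fs_1 = (fs_1 f^{-1})f = s_1' f$ for the reflection $s_1' = fs_1f^{-1}$, and part (a) applied to $f$ forces $l(s_1' f) = l(f) - 1$, which is the contradiction. The only real subtlety is the dimension bookkeeping in part (b); once that is set up the totally singular property of $\Mov(f)$ finishes everything, and part (c) is a clean application of the conjugation trick combined with (a).
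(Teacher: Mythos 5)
Your proof is correct, and it tracks the same basic mechanism as the paper's (the vector of any reflection is non-singular, hence lies outside the totally singular space $\Mov(f)$, so left-multiplication by any reflection drops $\dim\Mov$ by one while producing a minimal isometry of length $l(f)-1$). The routes to (b) and (c) are genuinely different, though. The paper derives (a) by pointing back to the explicit construction in the proof of \Cref{thm:reflection-length}, then gets (b) essentially for free: from $g<f$ one extracts a minimal factorization of $f$ ending with one of $g$, so $g\leq rf$ for some reflection $r$, and $rf$ is minimal, so \Cref{lemma:isometries-below-f} applies. Part (c) is then a two-case dichotomy: if $f<f'$ then $f'$ is either minimal (and \Cref{lemma:isometries-below-f} forces $f$ minimal) or non-minimal (and (b) forces $f$ minimal), contradiction either way. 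You instead prove (b) by direct dimension counting on $\Mov(f)\subseteq\Mov(g)+\Mov(h)$, squeezing the inequality to conclude $h$ is minimal with totally singular moved space, hence trivial; and you prove (c) by the conjugation trick $fs_1=(fs_1f^{-1})f$ plus the $l(rf)=l(f)-1$ identity from (a). Both routes are valid; the paper's leans on the already-established machinery of \Cref{lemma:isometries-below-f} and is shorter, while yours is more self-contained and makes the dimension bookkeeping explicit rather than delegating it. One small stylistic note: in part (b) you should dispose of the trivial case $g=\id$ up front (it is minimal by definition), since writing $\dim\Mov(g)=m-2$ presupposes $g$ is a non-minimal non-identity, but this is a cosmetic point and does not affect correctness.
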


\begin{proof}
  In the proof of \Cref{thm:reflection-length}, it is shown that any
  reflection $r \in O(V)$ is part of some minimal length reflection
  factorization of $f$.  This implies both $r \leq f$ and $rf \leq f$.
  Note that $r \neq f$ because every reflection is minimal, and
  clearly $rf \neq f$, so the strict relations of part (a) hold.  From
  that proof it is also clear that $rf$ is minimal, so every isometry
  $g < f$ is minimal by \Cref{lemma:isometries-below-f}, proving part
  (b).  Part (c) follows from \Cref{lemma:isometries-below-f} and part
  (b).
\end{proof}

In the following, we give a coarse description of the structure of $[\id, f]$ for a non-minimal isometry $f$.
Note that $[\id, f]$ contains multiple isometries with the same moved space, so a bijection like the one of \Cref{thm:intervals} does not exist.
Denote by $(\id, f) = [\id, f] \setminus \{\id, f\}$ the open interval between the identity and $f$.
Let $\W_f$ be the set of all
subspaces $W \subseteq V$ containing $\Mov(f)$ as a codimension-one
subspace and not totally singular.
For any subspace $W \in
\W_f$, let $P_{f, W} = \{ g \in (\id, f) \mid \Mov(g) \subseteq W \}$.

\begin{theorem}[Non-minimal intervals]
	Let $f \in O(V)$ be a non-minimal isometry. As a poset, the open interval $(\id, f)$ is the disjoint union (also called ``parallel composition'') of the subposets $P_{f, W}$:
	\[ (\id, f) = \bigsqcup_{W \in \W_f} P_{f, W}. \]
	\label{thm:non-minimal-intervals}
\end{theorem}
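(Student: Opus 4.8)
The plan is to prove the statement in two parts: first that the sets $P_{f,W}$ cover the open interval $(\id, f)$, and second that they are pairwise disjoint and that there are no order relations between elements lying in distinct $P_{f,W}$'s (which is what ``disjoint union of subposets'' means: the underlying set is the disjoint union, and $g < g'$ can only happen when $g, g'$ lie in a common $P_{f,W}$, with the induced order inside each $P_{f,W}$ being the restriction of the order on $(\id, f)$).

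First I would establish the covering. Let $g \in (\id, f)$. By \Cref{thm:non-minimal-isometries}(b), $g$ is minimal, so $l(g) = \dim\Mov(g)$, and since $g \leq f$ with $f$ non-minimal, $g \neq f$ forces $g^{-1}f$ to be non-trivial with $l(g^{-1}f) = l(f) - l(g) = \dim\Mov(f) + 2 - \dim\Mov(g)$ (using \Cref{thm:reflection-length} and that $\Mov(f)$ is totally singular). Consider a minimal reflection factorization of $f$ that starts with a minimal factorization of $g$, say $f = (r_1\dotsm r_m)(r_{m+1}\dotsm r_k)$ with $g = r_1\dotsm r_m$. Set $W = \Mov(r_1\dotsm r_{k-1})$. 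Since $r_1 \dotsm r_{k-1}$ has reflection length $k - 1 = \dim\Mov(f) + 1$, which is odd, \Cref{lemma:reflection-multiplication} (the parity consequence) together with minimality of $r_1\dotsm r_{k-1}$ — indeed $k-1$ reflections giving a moved space of dimension $\leq k-1$, and actually this isometry is minimal because its length $k-1$ is odd so $\chi$ cannot be alternating — shows $\dim W = k - 1 = \dim\Mov(f) + 1$. Moreover $\Mov(g) \subseteq W$ since $g = r_1\dotsm r_m$ is an initial segment and moved spaces only grow along a direct factorization (part (b) of \Cref{lemma:reflection-multiplication}), so $\Mov(g) = \Mov(r_1) \oplus \dotsb \oplus \Mov(r_m) \subseteq \Mov(r_1) \oplus \dotsb \oplus \Mov(r_{k-1}) = W$. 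I need $\Mov(f) \subseteq W$ of codimension one and $W$ not totally singular: appending $r_k$ to $r_1\dotsm r_{k-1}$ gives $f$, and since $l(f) = k = (k-1)+1$ this last multiplication is \emph{not} length-increasing (because $\dim\Mov(f) = k-2 < k-1 = \dim W$), so by \Cref{lemma:reflection-multiplication}(a) we are in the case $\Mov(r_k) \subseteq W$ and $\Mov(f) = \Mov(r_k)^\triangleright$ inside $(W, \chi_{r_1\dotsm r_{k-1}})$, hence $\Mov(f) \subseteq W$ with $\dim\Mov(f) = \dim W - 1$; and $W$ cannot be totally singular because it carries a non-degenerate $\chi$ containing the non-singular vector spanning $\Mov(r_k)$. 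Thus $W \in \W_f$ and $g \in P_{f,W}$.

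Next I would prove disjointness and the absence of cross relations. Suppose $g \in (\id, f)$ and $W, W' \in \W_f$ with $\Mov(g) \subseteq W \cap W'$; I claim $W = W'$. The key point is that $W$ is determined by $g$ (and $f$): it must equal $\Mov(g) + \Mov(g^{-1}f)$ — more precisely, I expect to show that the unique $W \in \W_f$ containing $\Mov(g)$ is $W = \Span(\Mov(f), \Mov(g))$ or equivalently is forced once one knows that $\Mov(g) \subseteq W$, $\Mov(f) \subseteq W$ with $\codim_W \Mov(f) = 1$, and $\dim W = \dim\Mov(f)+1$. Actually the cleanest route: given $g \in P_{f,W}$, the factorization $f = g \cdot (g^{-1}f)$ is a factorization where $g$ is minimal but $g^{-1}f$ is non-minimal (its length is $\dim\Mov(f)+2-\dim\Mov(g) = \dim\Mov(g^{-1}f) + 2$, forcing $\Mov(g^{-1}f)$ totally singular). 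One shows $\Mov(g) \cap \Mov(g^{-1}f) = \{0\}$ and $\Mov(f) \subseteq \Mov(g) \oplus \Mov(g^{-1}f) =: W_g$, with $\dim W_g = \dim\Mov(g) + \dim\Mov(g^{-1}f) = \dim\Mov(f) + 1$, so $W_g \in \W_f$ (non-totally-singular since $\Mov(g)$ is not totally singular by minimality of $g$), and $\Mov(g) \subseteq W_g$. If also $\Mov(g) \subseteq W'$ for $W' \in \W_f$, then $W'$ contains both $\Mov(g)$ and $\Mov(f)$; I would argue that $W_g$ is the unique subspace of dimension $\dim\Mov(f)+1$ containing $\Mov(f)$ and meeting the requirement — but a cleaner argument is to observe that a hyperplane $W$ of $V$ (within the span) containing $\Mov(f)$ and a given complementary line is unique, and $\Mov(g)$ together with the constraint from $f$ pins down that line. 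For the order-relation statement: if $g < g'$ in $(\id, f)$ then $\Mov(g) \subseteq \Mov(g')$ by \Cref{lemma:isometries-below-f}(a) (both are minimal, below $f$, via a common factorization through $f$). Then $W_{g'} \supseteq \Mov(g') \supseteq \Mov(g)$, so $g \in P_{f, W_{g'}}$, and by uniqueness $W_g = W_{g'}$; hence $g$ and $g'$ lie in the same block.

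The main obstacle I anticipate is the uniqueness claim: showing that for $g \in (\id, f)$ there is exactly one $W \in \W_f$ with $\Mov(g) \subseteq W$. The subtlety is that $W$ must simultaneously contain $\Mov(g)$ and $\Mov(f)$ as a codimension-one subspace; since $\dim\Mov(f) = \dim W - 1$, the space $W$ is spanned by $\Mov(f)$ and any one extra vector not in $\Mov(f)$, and $\Mov(g) \not\subseteq \Mov(f)$ (because $\Mov(f)$ is totally singular but $\Mov(g)$ is not, unless $g = \id$), so $W = \Span(\Mov(f), v)$ for any $v \in \Mov(g) \setminus \Mov(f)$ — but I must check that \emph{all} such choices of $v$ give the same $W$, which amounts to showing $\Mov(g) + \Mov(f)$ has dimension exactly $\dim\Mov(f) + 1$, i.e., $\dim(\Mov(g) \cap \Mov(f)) = \dim\Mov(g) - 1$. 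This should follow from the structure of the factorization $f = g\,(g^{-1}f)$: one has $W_g = \Mov(g) \oplus \Mov(g^{-1}f) \supseteq \Mov(f)$ with $\dim W_g = \dim\Mov(f)+1$, and $\Mov(f) \cap \Mov(g)$ then has the right codimension inside $\Mov(g)$ by a dimension count in $W_g$. Pinning down this linear-algebra bookkeeping carefully — and verifying $\Mov(f) \subseteq \Mov(g) \oplus \Mov(g^{-1}f)$, which I'd get from $\Mov(f) \subseteq \Mov(g) + \Mov(g^{-1}f)$ plus the dimension count forcing the sum to be direct — is the technical heart of the argument; everything else is a direct appeal to \Cref{thm:reflection-length}, \Cref{lemma:reflection-multiplication}, and \Cref{lemma:isometries-below-f}.
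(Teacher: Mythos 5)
Your approach is the same as the paper's, and the overall plan (cover via the penultimate prefix of a minimal factorization through $g$; identify $W$ uniquely from $\Mov(g)$; get absence of cross-relations from downward closure of moved spaces) is correct. Your covering argument is fine, modulo the routine adaptation of \Cref{lemma:reflection-multiplication} to right multiplication by a reflection. However, your ``cleanest route'' to uniqueness contains a genuine error: you claim that $g^{-1}f$ is non-minimal with $\Mov(g^{-1}f)$ totally singular, but in fact $g^{-1}f < f$ (since $g \neq \id$ and $l(f) = l(g) + l(g^{-1}f)$), so $g^{-1}f$ is \emph{minimal} by \Cref{thm:non-minimal-isometries}(b). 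Consequently $\dim\Mov(g^{-1}f) = l(g^{-1}f) = \dim\Mov(f) + 2 - \dim\Mov(g)$, so $\dim\Mov(g) + \dim\Mov(g^{-1}f) = \dim\Mov(f) + 2$, which shows that the sum $\Mov(g) + \Mov(g^{-1}f)$ cannot be direct if it is to fit inside a $(\dim\Mov(f)+1)$-dimensional $W$; your stated dimension count for $W_g$ is also inconsistent with either assumption.

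The correct argument is the one you sketch but then set aside, and it is exactly what the paper does: once you know $g$ is minimal, $\Mov(g)$ is not totally singular, while $\Mov(f)$ is, so $\Mov(g) \nsubseteq \Mov(f)$. Any $W' \in \W_f$ with $\Mov(g) \subseteq W'$ then contains $\Mov(f) + \Mov(g)$, which strictly contains $\Mov(f)$ and hence has dimension $\geq \dim\Mov(f)+1 = \dim W'$; therefore $W' = \Mov(f) + \Mov(g)$, with no further dimension bookkeeping needed. You do not need to worry about the choice of $v \in \Mov(g)\setminus\Mov(f)$ being well-defined, since the containment $\Mov(f)+\Mov(g) \subseteq W'$ together with $\dim W' = \dim\Mov(f)+1$ already pins everything down. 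Delete the $W_g = \Mov(g)\oplus\Mov(g^{-1}f)$ detour, keep the direct $W' = \Mov(f)+\Mov(g)$ argument, and the proof matches the paper's.
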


\begin{proof}
	Let $g \in (\id, f)$. Then $g \leq rf$ for some reflection $r$, and $rf$ is minimal by \Cref{thm:non-minimal-isometries}.
	Since $f$ is non-minimal, $\Mov(f)$ is a codimension-one subspace of $W = \Mov(rf)$ by part (b) of \Cref{lemma:reflection-multiplication}.
	Then $W \in \W_f$ because $rf$ is minimal, and $g \in P_{f, W}$ by \Cref{lemma:isometries-below-f}.
	
	Let $W' \in \W_f$ be any subspace such that $g \in P_{f, W'}$.
	Note that $g$ is minimal by \Cref{thm:non-minimal-isometries}, so $\Mov(g) \nsubseteq \Mov(f)$.
	Since $\Mov(f)$ is a
	codimension-one subspace of $W'$, we have that $W' =
	\Mov(f) + \Mov(g)$.
	Therefore $W'$ is uniquely determined by $f$ and $g$.
	In other words, $g$ is contained in exactly one $P_{f, W'}$.
	
	Finally, if $g \in P_{f, W}$ and $g' \leq g$, then $\Mov(g') \subseteq \Mov(g)$ by \Cref{lemma:isometries-below-f} and therefore $g' \in P_{f, W} \cup \{\id\}$.
	This means that there is no order relation between $P_{f, W}$ and $P_{f, W'}$ if $W \neq W'$.
\end{proof}

\pgfdeclarelayer{bg}    %
\pgfsetlayers{bg,main}  %

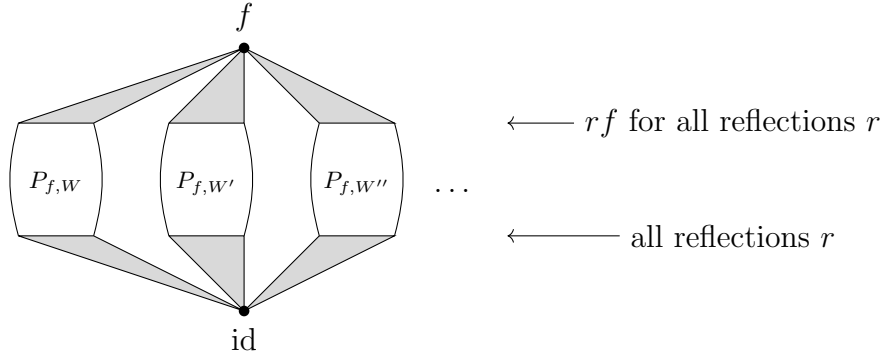
\begin{figure}[t]
	\begin{tikzpicture}
		\newcommand{\atoms}{1}
		\newcommand{\coatoms}{2.5}
		\node[inner sep=0.05cm, fill, circle, label=below:{$\id$}] (id) at (0,0) {};
		\node[inner sep=0.05cm, fill, circle, label=above:{$f$}] (f) at (0,3.5) {};

		\draw (-3, \atoms) to [bend left=15] (-3, \coatoms);
		\draw (-2, \atoms) to [bend right=15] (-2, \coatoms);
		\node at (-2.5, 1.7) {\scriptsize $P_{f,W}$};
		
		\draw (-1, \atoms) to [bend left=15] (-1, \coatoms);
		\draw (0, \atoms) to [bend right=15] (0, \coatoms);
		\node at (-0.5, 1.7) {\scriptsize $P_{f,W'}$};
		
		\node at (2.8, 1.6) {$\dots$};

		\draw (2, \atoms) to [bend right=15] (2, \coatoms);
		\draw (1, \atoms) to [bend left=15] (1, \coatoms);
		\node at (1.5, 1.7) {\scriptsize $P_{f,W''}$};
		
		\begin{pgfonlayer}{bg}    %
			\draw[fill=gray!30] (-3, \atoms) -- (-2, \atoms) -- (id.center) -- (-3, \atoms);
			\draw[fill=gray!30] (-1, \atoms) -- (0, \atoms) -- (id.center) -- (-1, \atoms);
			\draw[fill=gray!30] (1, \atoms) -- (2, \atoms) -- (id.center) -- (1, \atoms);
					
			\draw[fill=gray!30] (-3, \coatoms) -- (-2, \coatoms) -- (f.center) -- (-3, \coatoms);
			\draw[fill=gray!30] (-1, \coatoms) -- (0, \coatoms) -- (f.center) -- (-1, \coatoms);
			\draw[fill=gray!30] (1, \coatoms) -- (2, \coatoms) -- (f.center) -- (1, \coatoms);
		\end{pgfonlayer}
		
		\node (rf) at (6.5, \coatoms) {$rf$ for all reflections $r$};
		\draw[->] (rf) -- (3.5, \coatoms);
		\node (r) at (6.5, \atoms) {all reflections $r$};
		\draw[->] (r) -- (3.5, \atoms);
	\end{tikzpicture}
	
	\caption{Coarse structure of an interval $[\id, f]$ for a non-minimal isometry $f$, as described by \Cref{thm:non-minimal-intervals}.}
	\label{fig:non-minimal-interval}
\end{figure}

\Cref{fig:non-minimal-interval} shows the Hasse diagram of a non-minimal interval $[1,f]$, as described by the previous theorem.
Note that each subposet $P_{f, W}$ is self-dual: the map $g \mapsto g^{-1}f$ is an order-reversing bijection from $P_{f, W}$ to itself.

\section{Positive factorizations}
\label{sec:positive-factorizations}

Let $(V, Q)$ be a non-degenerate quadratic space over an ordered field
$\F$.  In particular, $\F$ has characteristic $0$.  A non-singular
vector $v \in V$ is said to be \emph{positive} if $Q(v) > 0$, and
\emph{negative} if $Q(v) < 0$.  In this section we focus on the
factorizations of isometries into \emph{positive reflections},
i.e., reflections with respect to positive vectors.  We refer to these
factorizations as \emph{positive reflection factorizations}.  Under
the hypothesis that $\F$ is \emph{square-dense} (the squares are dense
in the positive elements), we obtain a clean description of the
minimal length of a positive reflection factorization of any isometry
$f \in O(V)$.  In particular, we show that $f$ admits a positive
reflection factorization if and only if its spinor norm is positive.

Recall that a subspace $W \subseteq V$ is \emph{positive definite}
(resp.\ \emph{negative definite}) if $Q(v) > 0$ (resp. $< 0$) for
every non-zero vector $v \in W$.  It is \emph{positive semi-definite}
(resp.\ \emph{negative semi-definite}) if $Q(v) \geq 0$ (resp.\ $\leq
0$) for all $v \in W$.  By the inertia theorem of Jacobi and Sylvester
\cite[Theorem 4.4]{scharlau2012quadratic}, $V$ can be decomposed as an
orthogonal direct sum $V^+ \perp V^-$, where $V^+$ is a positive
definite subspace and $V^-$ is a negative definite subspace.  The
dimensions of $V^+$ and $V^-$ do not depend on the chosen
decomposition, and the pair $(\dim V^+, \dim V^-)$ is called the
\emph{signature} of $(V, Q)$.
We refer to \cite{scharlau2012quadratic} for additional theory on
quadratic spaces over ordered fields.  We assume from now on that $V$
is not negative definite, because otherwise there are no positive
vectors.

Denote by $\F^+ \subseteq \F$ the subset of all positive elements of $\F$.
Since $(\F^\times)^2 \subseteq \F^+$, there is a well-defined quotient
map $\pi \colon \F^\times / (\F^\times)^2 \to \F^\times / \ \F^+ \cong
\Z_2$. In other words, every element of $\F^\times / (\F^\times)^2$ is
either positive or negative, and this notion is well-defined.

\begin{definition}
  An isometry $f \in O(V)$ is \emph{positive} (resp.\ negative) if its
  spinor norm $\theta(f)$ is positive (resp.\ negative).
\end{definition}

Notice that this definition is compatible with the previous definition
of positive reflection: a reflection $r_v$ is positive if and only if
$Q(v) > 0$.  The positive isometries form a subgroup $O_+(V)$ of
$O(V)$, being the kernel of the composition \[ O(V) \xrightarrow{\theta}
\F^\times / (\F^\times)^2 \xrightarrow{\pi} \Z_2. \] In particular, if
an isometry $f \in O(V)$ can be written as a product of positive
reflections, then it is positive.  The subgroup $O_+(V)$ has index $2$
in $O(V)$ unless $V$ is positive definite, in which case $O_+(V) =
O(V)$.

\begin{example}[Isometries over the real numbers]
  If $\F = \R$ and $V$ is not (positive or negative) definite, then
  $O(V)$ has four connected components.  They are detected by the
  surjective group homomorphism $O(V) \to \Z_2 \times \Z_2$ defined as
  $f \mapsto (\pi(\theta(f)), \det(f))$.  The connected component of the identity is
  $O_+(V) \cap SO(V)$.
\end{example}

We are interested in determining the \emph{positive reflection length}
of a positive isometry $f \in O_+(V)$, i.e., the minimal length of a
positive reflection factorization of $f$.  A lower bound for
the positive reflection length is given by the reflection length, which is computed in \Cref{thm:reflection-length}.  The following example shows
that this lower bound is not always attained.

\begin{example}
  Suppose that $W \subseteq V$ is a $2$-dimensional negative definite
  subspace, and let $\chi = \frac12 \beta|_{W}$.  Let $f \in O(V)$ be
  the isometry with $\Mov(f) = W$ and $\chi_f = \chi$.  Then $f$ is
  positive and minimal (in the sense of \Cref{def:minimal-isometry}),
  but all the reflections $r \leq f$ are negative.  Therefore $f$ is a
  product of $2$ negative reflections, but it cannot be written as a
  product of $2$ positive reflections.  Note that $f$ is an
  involution, by property (v) of \Cref{lemma:wall-form-properties}.
\end{example}

More generally, if $f$ is an involution, we have $\chi_f = \frac12
\beta|_{\Mov(f)}$ by properties (i) and (v) of
\Cref{lemma:wall-form-properties}.  Then a triangular basis (as in
\Cref{lemma:triangular-basis}) of positive vectors exists if and only
if $\Mov(f)$ is positive definite.  In other words, an involution $f$
admits a direct factorization into positive reflections if and only if
$\Mov(f)$ is positive definite.

We aim to show that all positive non-involutions admit a direct
factorization into positive reflections provided that $\Mov(f)$
contains at least one positive vector.  To prove this, in the
rest of this section, we are going to assume that the field $\F$
satisfies the following property.

\begin{definition}
  An ordered field $\F$ is \emph{square-dense} if the set of squares
  $(\F^\times)^2$ is dense in the set of positive elements $\F^+$.  In
  other words, for every $0 < a < b$, there exists a square $c^2$ such
  that $a < c^2 < b$.
\end{definition}

The class of square-dense fields includes all \emph{Archimedean
  fields} (i.e., the subfields of $\R$) and \emph{Euclidean fields}
(i.e., ordered fields where every positive element is a square), which
include all \emph{real closed fields}.  See \cite[Chapter
  3]{scharlau2012quadratic} for the definitions and properties of
these classes of fields, particularly in relation to the theory of
quadratic forms.  An example of an ordered field that is not
square-dense is the field of rational functions $\Q(X)$, with the
order determined by $a < X$ for all $a \in \Q$ (this is a typical
example of a non-Archimedean field).

Our reason to choose the square-dense property as our working
hypothesis is that it is quite general, but at the same time, it allows us
to obtain the same characterization of the positive reflection length
(\Cref{thm:positive-factorizations}) that we would obtain over the
real numbers.

We start by proving a variant of \Cref{lemma:triangular-basis}.

\begin{lemma}\label{lemma:basis-with-one-positive-vector}
  Let $\chi$ be a non-degenerate bilinear form on a finite-dimensional
  vector space $W$ over an ordered field $\F$, with $\dim W \geq 2$.
  Suppose that there is at least one vector $u \in W$ with $\chi(u, u)
  > 0$.  Then there is a basis $e_1, \dotsc, e_m$ such that $\chi(e_1,
  e_1) > 0$, $\chi(e_i, e_i) \neq 0$ for $i \geq 2$, and $\chi(e_i,
  e_j) = 0$ for $i<j$.
\end{lemma}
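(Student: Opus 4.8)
The plan is to adapt the induction of \Cref{lemma:triangular-basis} while carefully controlling the sign of the self-product of the first basis vector. The key insight is that the square-dense hypothesis lets us scale a vector of positive self-product to make its self-product land in any prescribed positive interval, which is exactly what is needed to repair the degenerate/alternating case of the earlier argument without losing positivity.

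First I would start from the given vector $u$ with $\chi(u,u) > 0$ and set up the splitting $W = \< u \> \oplus \< u \>^\triangleright$, noting that $\chi|_{\< u \>^\triangleright}$ is again non-degenerate. If this restriction is non-alternating, I would like to conclude by induction on $\dim W$; but I must be careful, because the inductive hypothesis of this very lemma requires a vector of \emph{positive} self-product inside $\< u \>^\triangleright$, which need not exist a priori. So the cleaner strategy is a two-layer induction: prove by induction on $m = \dim W$ the statement ``if $\chi$ is non-degenerate and non-alternating, then $W$ has a triangular basis $e_1,\dots,e_m$ with $\chi(e_i,e_i)\neq 0$; moreover if some vector has positive self-product then $e_1$ can be chosen with $\chi(e_1,e_1)>0$.'' The non-positive part is \Cref{lemma:triangular-basis}; the content here is the ``moreover'' clause. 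With $e_1 = u$ (positive self-product) and $\chi|_{\< u \>^\triangleright}$ non-alternating, induction on that restriction supplies $e_2,\dots,e_m$ and we are done — here we do \emph{not} need a positive vector inside $\< u\>^\triangleright$, only the non-alternating conclusion of \Cref{lemma:triangular-basis}.

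The main obstacle is the case where $\chi|_{\< u \>^\triangleright}$ is alternating. I would mimic the contradiction argument of \Cref{lemma:triangular-basis}: pick $v \in \< u \>^\triangleright$ (nonzero, with $\chi(v,u)\neq 0$ if $\< u\>^\triangleleft \neq \< u\>^\triangleright$), and replace $v$ by a scalar multiple so that $\chi(u+v,u+v) = \chi(u,u) + a\,\chi(v,u)$ is nonzero. This is where square-density enters: instead of merely making it nonzero, I want $\chi(u+v,u+v) > 0$, which I can arrange because $\chi(v,u) = 0$ forces $\chi(u+av,u+av)=\chi(u,u)>0$ for all $a$, and $\chi(v,u)\neq 0$ lets me choose $a$ small enough (using the order, or square-density to control $a$ through a square if needed) to keep the perturbed value positive. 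Then, following \Cref{lemma:triangular-basis} verbatim, I build the vector $u + bv - cw$ with $c = \chi(u+v,u+v) > 0$ and $w \in \< u\>^\triangleright$ with $\chi(v,w)=1$; the same dichotomy as before either produces $b$ with $\chi|_{\< u+v\>^\triangleright}$ non-degenerate and non-alternating — in which case $e_1 = u+v$ has $\chi(e_1,e_1) = c > 0$ and we finish by induction — or forces $\chi(v,u)=0$ and $\chi(w,u)\neq 0$, contradicting the choice of $v$. The only genuinely new ingredient over \Cref{lemma:triangular-basis} is thus tracking positivity of $c$ through this construction, which the ordered-field structure handles directly; square-density is invoked only to guarantee the scaling $v \mapsto av$ can be performed with $a$ a square when one insists on keeping the triangular basis inside a prescribed lattice of norms, but for the bare statement the order alone suffices. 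I would then remark that the length-$m$ factorization consequence is immediate from \Cref{thm:factorization}, exactly as in the proof of \Cref{thm:reflection-length}.
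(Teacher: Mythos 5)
Your approach is correct and essentially the same as the paper's: rerun the proof of \Cref{lemma:triangular-basis} starting from a vector $u$ with $\chi(u,u)>0$, pick the scaling of $v$ so that $\chi(u+av,u+av)$ is positive rather than merely nonzero, and then finish by applying \Cref{lemma:triangular-basis} itself to $\< e_1 \>^\triangleright$ (so your worry about needing a positive vector there is moot, exactly as you note). One small contrast: the paper's cleaner choice is $a=\chi(v,u)$ when $\chi(v,u)\neq 0$, which gives $\chi(u,u)+\chi(v,u)^2>0$ with no case analysis, and square-density plays no role in this lemma — only the ordering does — so your opening framing of square-density as the key ingredient is a red herring, as you yourself concede by the end.
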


\begin{proof}
  Proceed as in the proof of \Cref{lemma:triangular-basis}, starting
  with a vector $u$ such that $\chi(u,u) > 0$.  Choose $a \in
  \F^\times$ such that $\chi(u, u) + a \chi(v, u) > 0$, for example by
  taking $a = \chi(v, u)$.  Then the first basis vector $e_1$
  satisfies $\chi(e_1, e_1) > 0$.  The rest of the proof is unchanged.
\end{proof}

Next, we prove a technical lemma in dimension 3.  This is the building
block that allows us to construct triangular bases of positive
vectors when the Wall form is not symmetric.

\begin{lemma}\label{lemma:non-symmetric-3d}
  Let $W$ be a $3$-dimensional vector space over a square-dense field
  $\F$.  Let $\chi$ be a non-degenerate bilinear form on $W$.  Suppose
  that $\chi$ is not symmetric, and that there is at least one vector
  $u \in W$ with $\chi(u,u) > 0$.  Then there exist two vectors $v_1,
  v_2 \in W$ such that $\chi(v_1, v_1) > 0$, $\chi(v_2, v_2) > 0$, and
  $\chi(v_1, v_2) = 0$.
\end{lemma}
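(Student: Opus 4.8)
The strategy is to split into two cases according to the structure of $\chi$ on $W$ and reduce to finding two \emph{commuting} positive reflections, i.e.\ the orthogonality relation $\chi(v_1,v_2)=0$ together with positivity of both diagonal entries. I would start by applying \Cref{lemma:basis-with-one-positive-vector} to get a triangular basis $e_1,e_2,e_3$ with $\chi(e_1,e_1)>0$, $\chi(e_i,e_i)\neq 0$ for $i\geq 2$, and $\chi(e_i,e_j)=0$ for $i<j$. So with respect to this basis the matrix of $\chi$ is lower triangular with nonzero diagonal. If I could also arrange the relevant off-diagonal entry to vanish I would be done immediately, but in general $\chi$ is not symmetric so that fails; the point of the square-dense hypothesis is to fix this.

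\textbf{Key steps.} First, observe that since $\chi(e_1,e_1)>0$ we can take $v_1=e_1$, and then look for $v_2$ in the right-orthogonal complement $\langle e_1\rangle^\triangleright$, which is $2$-dimensional (as $\chi(e_1,e_1)\neq 0$) and on which $\chi$ restricts non-degenerately. So the problem reduces to: on a $2$-dimensional non-degenerate bilinear space, find a vector of positive $\chi$-norm, given \emph{some} information about positivity. The catch is that $\langle e_1\rangle^\triangleright$ need not contain a positive vector even though $W$ does — indeed if $\chi$ restricted to $\langle e_1\rangle^\triangleright$ were symmetric and negative definite we would be stuck, which is exactly the involution obstruction mentioned before the lemma. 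So the second step must use the non-symmetry of $\chi$ on $W$: I would argue that if $\chi|_{\langle e_1\rangle^\triangleright}$ is negative semi-definite (in particular if it had no positive vector), then by combining a positive-norm vector from the $e_1$-direction with a vector from $\langle e_1\rangle^\triangleright$ and exploiting asymmetry of the cross terms, one produces a positive-norm vector $v_1'$ for which $\langle v_1'\rangle^\triangleright$ genuinely contains a positive vector. The square-dense hypothesis enters when tuning the scalar: one needs to solve an inequality like $\chi(u,u)+t\,\chi(x,u)+t^2\chi(x,x)>0$ while simultaneously forcing some other quantity (the would-be off-diagonal term) into a square-sized window — this is precisely where density of squares in $\F^+$ is indispensable, since over a general ordered field the relevant interval might contain no square.

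\textbf{Main obstacle.} The delicate point is the case analysis on the $2$-dimensional restriction $\chi|_{\langle v_1\rangle^\triangleright}$: when it is already non-symmetric one can recurse and win cheaply, but when it happens to be symmetric and negative definite one must \emph{change $v_1$ first}, and showing such a change is always possible is where the global non-symmetry of $\chi$ on the full $3$-dimensional $W$ and the square-dense property both get used. I expect the proof to proceed by writing $\chi$ as symmetric part plus antisymmetric part, noting the antisymmetric part is nonzero, and choosing coordinates so the antisymmetric part is concentrated in the $e_2,e_3$-plane; then an explicit two-parameter family $v_1=e_1+s e_2+ te_3$, $v_2=$ (a combination in $\langle v_1\rangle^\triangleright$) is written down, the norms computed, and the square-dense condition invoked to pick $s$ (or a scaling) making a certain expression a perfect square, which forces the cross term to vanish while keeping both norms positive. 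I would be careful to verify that the resulting $v_1,v_2$ are linearly independent (so they really span a hyperbolic-or-definite plane, not a line), though that follows since their $\chi$-Gram matrix $\begin{psmallmatrix}\chi(v_1,v_1)&0\\ \chi(v_2,v_1)&\chi(v_2,v_2)\end{psmallmatrix}$ has nonzero determinant.
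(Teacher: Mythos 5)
Your proposal is a plan rather than a proof: the load-bearing step — exhibiting two positive vectors $v_1$ and $v_2$ with $\chi(v_1,v_2)=0$ when $\langle e_1\rangle^\triangleright$ has no positive vector — is where you say ``I would argue that \dots'' and ``I expect the proof to proceed by \dots'' without actually carrying it out. That step is exactly where the work is, and there is no way to assess correctness without it.

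There is also a concrete error in your case dichotomy. You split on whether $\chi|_{\langle e_1\rangle^\triangleright}$ is symmetric, claiming ``when it is already non-symmetric one can recurse and win cheaply.'' This is false in two ways. First, there is nothing to recurse to: the ambient problem is three-dimensional and the restriction is two-dimensional, where non-symmetry alone gives you nothing. Second, a non-symmetric non-degenerate form on a two-dimensional space need not contain any positive vector — the matrix $\left(\begin{smallmatrix} -1 & 1 \\ 0 & -1 \end{smallmatrix}\right)$ has $\chi(v,v) = -(v_1-v_2/2)^2 - \tfrac34 v_2^2 < 0$ for all $v\neq 0$. So the obstruction you need to circumvent is not ``$\chi|_{\langle e_1\rangle^\triangleright}$ is symmetric negative definite'' but simply ``$\langle e_1\rangle^\triangleright$ contains no positive vector,'' and your proposed escape route (perturbing $v_1$) must handle the non-symmetric-but-all-negative restriction as well, which your sketch never considers. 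By contrast, the paper's proof fixes a vector $e_2$ spanning $\langle e_1\rangle^\triangleleft \cap \langle e_1\rangle^\triangleright$ and splits on $\chi(e_2,e_2)=0$ versus $\chi(e_2,e_2)<0$; the first case is resolved by purely algebraic manipulation (no square-density), and only the second case requires solving the two simultaneous inequalities where square-density is invoked. Your intuition about where square-density enters — tuning a scalar so that $q^2$ lands in a prescribed interval while keeping two norms positive — is correct, but it only applies to half the cases, and the explicit parametrized family $v_1 = qe_1 + e_2$, $v_2 = e_1 + \tfrac{\gamma}{\delta}qe_2 + \cdots$ together with the sign-choice $abq\geq 0$ and the interval comparison is what actually makes the argument close; none of that is present in your write-up.
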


\begin{proof}
  By \Cref{lemma:basis-with-one-positive-vector}, there exists a
  vector $e_1 \in W$ such that $\chi(e_1, e_1) > 0$ and $\chi|_{\< e_1
    \>^\triangleright}$ is not alternating.  Fix any non-zero vector
  $e_2 \in \< e_1 \>^\triangleleft \cap \< e_1 \>^\triangleright$.  If
  $\chi(e_2, e_2) > 0$, we are done by choosing $v_1 = e_1$ and $v_2 =
  e_2$.  So we may assume that $\chi(e_2, e_2) \leq 0$.
	
  \emph{Case 1: $\chi(e_2, e_2) = 0$.}  Since $\chi|_{\< e_1
    \>^\triangleright}$ is not alternating, there exists a vector $e_3
  \in \< e_1 \>^\triangleright$ such that $\chi(e_3, e_3) \neq 0$.  If
  $\chi(e_3, e_3) > 0$, we are done by choosing $v_1 = e_1$ and $v_2 =
  e_3$.  So we can assume that $\chi(e_3, e_3) < 0$.  Note that $e_3$
  is not a scalar multiple of $e_2$, so $e_2, e_3$ is a basis of $\<
  e_1 \>^\triangleright$.  Therefore $e_1, e_2, e_3$ is a basis of
  $W$, and in this basis the matrix of $\chi$ has the following form:
  \[
  \begin{mymatrix}{ccc}
    \gamma & 0 & 0 \\
    0 & 0 & c \\
    a & b & -\delta
  \end{mymatrix},
  \]
  with $\gamma, \delta > 0$, and $b, c \neq 0$ (otherwise $\chi$ is
  degenerate).
  We may also assume $a \neq 0$ since otherwise we can exchange $e_2$ and $e_3$ and reduce to the case 2 below.
	
  If $b + c \neq 0$, then set $v_1 = e_1$ and $v_2 = 2\delta e_2 +
  (b+c) e_3$.  We have that $\chi(v_1, v_2) = 0$, and $\chi(v_2, v_2)
  = \delta (b+c)^2 > 0$, so we are done.  Suppose now that $b + c =
  0$, so the matrix of $\chi$ becomes
  \[
  \begin{mymatrix}{ccc}
    \gamma & 0 & 0 \\
    0 & 0 & -b \\
    a & b & -\delta
  \end{mymatrix}.
  \]
  Let $v_1 = ab e_1 + \gamma\delta e_2$ and $v_2 = \delta e_1 + a
  e_3$.  Then
  \begin{align*}
    \chi(v_1, v_1) &= \gamma (ab)^2 > 0 \\
    \chi(v_1, v_2) &= \gamma \cdot ab \cdot \delta - b \cdot  \gamma\delta \cdot a = 0 \\
    \chi(v_2, v_2) &= \gamma \delta^2 + a \cdot \delta \cdot a - \delta a^2 = \gamma\delta^2 > 0.
  \end{align*}
  
  \emph{Case 2: $\chi(e_2, e_2) < 0$.}  Then $\chi|_{\<e_1, e_2\>}$ is
  non-degenerate, and $\< e_1, e_2 \> \cap \< e_1, e_2
  \>^\triangleright = \{0\}$.  Let $e_3 \in \< e_1, e_2
  \>^\triangleright$ be any non-zero vector.  Note that $\chi(e_3,
  e_3) \neq 0$, because $\chi$ is non-degenerate.  If $\chi(e_3, e_3)
  > 0$, we are done by setting $v_1 = e_1$ and $v_2 = e_3$, so we can
  assume that $\chi(e_3, e_3) < 0$.  Then the matrix of $\chi$ with
  respect to the basis $e_1, e_2, e_3$ has the following form:
  \[
  \begin{mymatrix}{ccc}
    \gamma & 0 & 0 \\
    0 & -\delta & 0 \\
    a & b & -\epsilon
  \end{mymatrix},
  \]
  where $\gamma, \delta, \epsilon > 0$, and at least one of $a$ and
  $b$ is non-zero (because $\chi$ is not symmetric).  Define
  \begin{align*}
    v_1 &= qe_1 + e_2 \\ v_2 &= e_1 + \frac{\gamma}{\delta} q e_2 +
    \frac{1}{2\epsilon} \left(a + \frac{\gamma}{\delta}bq \right) e_3,
  \end{align*}
  where $q \in \F$ is yet to be determined.  Then
  \begin{align*}
    \chi(v_1, v_1) &= \gamma q^2 - \delta \\ \chi(v_1, v_2) &= \gamma
    q - \delta \cdot \frac{\gamma}{\delta}q = 0 \\ \chi(v_2, v_2) &=
    \gamma - \frac{\gamma^2}{\delta} q^2 + \frac{1}{4\epsilon} \left(a
    + \frac{\gamma}{\delta}bq \right)^2.
  \end{align*}
  We are going to show how to choose $q$ so that $\chi(v_1, v_1) > 0$
  and $\chi(v_2, v_2) > 0$.  The first condition is
  \begin{equation}
    q^2 > \frac{\delta}{\gamma}.
    \label{eq:first-inequality}
  \end{equation}
  Now fix the sign of $q$ so that $abq \geq 0$.
  Then
  \[ \chi(v_2, v_2) \geq
  \gamma - \frac{\gamma^2}{\delta}q^2 + \frac{1}{4\epsilon} \left( a^2 +
  \left(\frac{\gamma}{\delta}b \right)^2 q^2 \right). \] In order to have
  $\chi(v_2, v_2) > 0$, it is enough to have that the right hand side
  of the previous equation is positive, and this condition can be
  rewritten as
  \begin{equation}\label{eq:second-inequality}
    \left(1 - \frac{b^2}{4\delta\epsilon} \right) q^2 < \left( 1 +
    \frac{a^2}{4\gamma\epsilon} \right) \frac{\delta}{\gamma}.
  \end{equation}
  If $b^2 \geq 4\delta\epsilon$, then \cref{eq:second-inequality} is always
  satisfied, and \cref{eq:first-inequality} is satisfied for
  \[ q = \pm \left(\frac{\delta}{\gamma} + 1 \right). \]
  If $b^2 < 4\delta\epsilon$, then
  \cref{eq:first-inequality,eq:second-inequality} are satisfied if
  \[ \frac{\delta}{\gamma} < q^2 < \frac{1 + a^2/4\gamma\epsilon}{1 -
    b^2/4\delta\epsilon} \cdot \frac{\delta}{\gamma} \] Recall that at least
  one of $a$ and $b$ is non-zero, so these inequalities define a
  non-empty interval in $\F^+$.  Since $\F$ is square-dense, this
  interval contains at least one square $q^2$.
\end{proof}

It is worth mentioning that \Cref{lemma:non-symmetric-3d} does not
hold over a general ordered field $\F$, as we show in the next
example.

\begin{example}\label{example:counterexample-positive-factorization}
  Let $\F = \Q(X)$, with the non-Archimedean order determined by $a <
  X$ for all $a \in \Q$.  On $W = \F^3$, consider the non-symmetric
  bilinear form $\chi$ defined by the following matrix:
  \[
  \begin{mymatrix}{ccc}
    1 & 0 & 0 \\
    0 & -X & 0 \\
    0 & 1 & -X
  \end{mymatrix}.
  \]
  Let $v = (p, q, r) \in W$ be any vector satisfying $\chi(v, v) > 0$.
  Then we have $p^2 - Xq^2 - Xr^2 + qr > 0$.  Note that $\deg(qr)
  < \max \{ \deg(Xq^2), \deg(Xr^2) \}$, unless both $q$ and $r$ are
  zero.  Therefore we must have $\deg(p^2) \geq \max \{ \deg(Xq^2),
  \deg(Xr^2) \}$, which can be rewritten as $\deg(p) > \deg(q)$ and
  $\deg(p) > \deg(r)$.  Now, suppose to have two vectors $v_1 = (p_1,
  q_1, r_1)$, $v_2 = (p_2, q_2, r_2)$ with $\chi(v_1, v_1) > 0$ and
  $\chi(v_2, v_2) > 0$. Then $\chi(v_1, v_2) = p_1p_2 - Xq_1q_2 -
  Xr_1r_2 + r_1q_2$, and here the degree of $p_1p_2$ is greater than
  the degree of all other terms.  Therefore $\chi(v_1, v_2) \neq 0$.
\end{example}

We are going to need some flexibility in the choice of the vectors
$v_1, v_2$ given by \Cref{lemma:non-symmetric-3d}.  The following two
easy lemmas allow us to modify a pair $(v_1, v_2)$ while maintaining the
properties we need.

\begin{lemma}\label{lemma:perturb-orthogonal-pair}
  Let $W$ be a finite-dimensional vector space over an ordered field
  $\F$, with $\dim W \geq 2$.  Let $\chi$ be a non-degenerate bilinear
  form on $W$, and suppose to have two non-zero vectors $v_1, v_2 \in
  W$ with $\chi(v_1, v_2) = 0$.  For every $u \in W$, there exists a
  vector $w \in W$ such that $\chi(v_1 + a u, v_2 + a w)
  = 0$ for all $a \in \F$.
\end{lemma}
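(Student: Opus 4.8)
The plan is to expand the bilinear expression and reduce the statement to finding a vector $w$ satisfying two linear conditions. By bilinearity of $\chi$ and the hypothesis $\chi(v_1, v_2) = 0$, for every $a \in \F$ we have
\[
\chi(v_1 + au,\, v_2 + aw) = a\bigl(\chi(v_1, w) + \chi(u, v_2)\bigr) + a^2\, \chi(u, w).
\]
Thus it suffices to produce $w \in W$ with $\chi(v_1, w) = -\chi(u, v_2)$ and $\chi(u, w) = 0$: for such a $w$ both coefficients on the right-hand side vanish, so the identity $\chi(v_1 + au, v_2 + aw) = 0$ holds for all $a$.

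To find such a $w$, I would consider the linear map $T \colon W \to \F^2$ given by $T(w) = \bigl(\chi(v_1, w),\, \chi(u, w)\bigr)$ and show that the vector $(-\chi(u, v_2),\, 0)$ lies in its image. If $v_1$ and $u$ are linearly independent, then, since $\chi$ is non-degenerate, the linear functionals $\chi(v_1, -)$ and $\chi(u, -)$ are linearly independent in $W^*$ (they are the images of $v_1$ and $u$ under the isomorphism $W \to W^*$, $x \mapsto \chi(x, -)$); hence $T$ is surjective and any preimage of $(-\chi(u, v_2),\, 0)$ works. If instead $v_1$ and $u$ are linearly dependent, then $v_1 \neq 0$ forces $u = \lambda v_1$ for some $\lambda \in \F$, so $\chi(u, v_2) = \lambda\, \chi(v_1, v_2) = 0$, and $w = 0$ satisfies both conditions.

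The computation is routine, and the one place to be careful will be the case split: in the linearly dependent case one must notice that the target value $\chi(u, v_2)$ vanishes automatically because $\chi(v_1, v_2) = 0$, so that $w = 0$ indeed suffices. Everything else follows directly from the non-degeneracy of $\chi$.
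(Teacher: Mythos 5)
Your proof is correct and takes essentially the same route as the paper: the same bilinear expansion reduces the problem to finding $w$ with $\chi(v_1,w)=-\chi(u,v_2)$ and $\chi(u,w)=0$, and the same case split on whether $u\in\langle v_1\rangle$ is used. The paper phrases the generic case as an affine translate of the hyperplane $\langle v_1\rangle^{\triangleright}$ meeting the distinct linear hyperplane $\langle u\rangle^{\triangleright}$, which is exactly the surjectivity of your map $T\colon W\to\F^2$; the two formulations are equivalent.
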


\begin{proof}
  If $u \in \< v_1 \>$, then we can simply choose $w = 0$.  Suppose
  now that $u \not \in \<v_1\>$.  Then $\< v_1 \>^\triangleright$ and
  $\< u \>^\triangleright$ are two distinct hyperplanes of $W$.  The
  set $H = \{ w \in W \mid \chi(u, v_2) + \chi(v_1, w) = 0 \}$ is an
  affine translate of $\< v_1 \>^\triangleright$, and so it intersects
  the linear hyperplane $\< u \>^\triangleright$. Let $w \in H \cap \<
  u \>^\triangleright$.  Then
  \[ \chi(v_1 + a u, v_2 + a
  w) = \chi(v_1, v_2) + a \big(\chi(u, v_2) + \chi(v_1, w)
  \big) + a^2 \chi(u, w) = 0 \]
  for all $a \in \F$.
\end{proof}

\begin{lemma}\label{lemma:perturb-positive-vector}
  Let $W$ be a finite-dimensional vector space over an ordered field
  $\F$.  Let $\chi$ be a non-degenerate bilinear form on $W$, and
  suppose to have a vector $v \in W$ with $\chi(v, v) > 0$.  For every
  $u \in W$, there exists $\delta \in \F^+$ such that $\chi(v+
  a u, v + a u) > 0$ for all $a$ in the open
  interval $(-\delta, \delta)$.
\end{lemma}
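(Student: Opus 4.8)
The plan is to expand $\chi(v+au,\,v+au)$ as a quadratic polynomial in the scalar $a$ and to exploit the fact that its constant term is the positive quantity $\chi(v,v)$. Concretely, writing $p=\chi(v,v)$, $q=\chi(v,u)+\chi(u,v)$, and $s=\chi(u,u)$, bilinearity of $\chi$ gives
\[ \chi(v+au,\,v+au) = p + qa + sa^2, \]
with $p>0$ by hypothesis. The task then reduces to choosing $\delta\in\F^+$ small enough that the linear and quadratic terms cannot overpower $p$ on the interval $(-\delta,\delta)$. Here $|\cdot|$ denotes the absolute value of the ordered field $\F$, i.e.\ $|x|=\max(x,-x)$.

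First I would restrict attention to scalars $a$ with $|a|\le 1$, so that $a^2\le |a|$ and hence
\[ |qa+sa^2| \le |q|\,|a| + |s|\,|a|^2 \le (|q|+|s|)\,|a|. \]
Then I would set $\delta=\min\{1,\ p/(|q|+|s|+1)\}$, which lies in $\F^+$ because $p$ and $|q|+|s|+1$ are positive and the minimum of two positive field elements is positive. For every $a$ with $|a|<\delta$ one obtains $|qa+sa^2|\le (|q|+|s|)\,|a| < (|q|+|s|+1)\,\delta \le p$, and therefore $\chi(v+au,\,v+au) = p + (qa+sa^2) > 0$, which is exactly the desired conclusion.

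I do not expect any real obstacle: the argument uses only the ordered-field axioms, so in particular it requires neither an Archimedean nor a square-dense hypothesis, and the witness $\delta$ is completely explicit. The only points that need mild care are that division by $|q|+|s|+1$ is always legitimate — the extra $+1$ guarantees a positive denominator even when $u=0$ (a case in which $\chi(v+au,\,v+au)=p$ for all $a$ and the statement is trivial) — and that the bound $a^2\le|a|$ is only valid once $|a|\le 1$, which is why the cap $\min\{1,\dots\}$ appears in the definition of $\delta$.
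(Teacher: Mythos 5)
Your proof is correct and follows essentially the same route as the paper's: expand $\chi(v+au,\,v+au)$ by bilinearity into $\chi(v,v)$ plus three $a$-dependent terms, then bound those perturbation terms by a suitable choice of small $a$. The paper states this tersely ("the absolute value of the last three summands can be made smaller than $\frac{1}{3}\chi(v,v)$ for sufficiently small $a$"); your version simply makes the witness $\delta$ explicit and spells out the ordered-field estimates.
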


\begin{proof}
  We have
  \[ \chi(v+ a u, v + a u) = \chi(v, v) + a
  \chi(u, v) + a \chi(v, u) + a^2 \chi(u, u). \]
  The absolute value of the last three summands can be made smaller than
  $\frac13 \chi(v, v)$, for a sufficiently small $a$.
\end{proof}

We are finally able to refine
\Cref{lemma:basis-with-one-positive-vector}, and obtain a whole
triangular basis of positive vectors.

\begin{lemma}\label{lemma:positive-basis}
  Let $W$ be a finite-dimensional vector space over a square-dense
  field $\F$.  Let $\chi$ be a non-degenerate bilinear form on $W$
  with $\det(\chi) > 0$.  Suppose that $\chi$ is not symmetric, and
  that there is at least one vector $u \in W$ with $\chi(u,u) > 0$.
  Then $W$ has a basis $e_1, \dotsc, e_m$ such that $\chi(e_i, e_i) >
  0$ for all $i$, and $\chi(e_i, e_j) = 0$ for $i<j$.
\end{lemma}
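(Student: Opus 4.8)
The plan is to argue by induction on $m = \dim W$. Since every one-dimensional bilinear form is symmetric, the hypothesis that $\chi$ is not symmetric forces $m \geq 2$. For the base case $m = 2$ I would choose a vector $e_1$ with $\chi(e_1, e_1) > 0$ (a scalar multiple of $u$) and let $e_2$ span the line $\<e_1\>^\triangleright$, so that $\chi(e_1, e_2) = 0$ automatically; in the basis $e_1, e_2$ the matrix of $\chi$ is lower triangular, so $\det(\chi) = \chi(e_1, e_1)\,\chi(e_2, e_2) > 0$ forces $\chi(e_2, e_2) > 0$. (This step uses $\det(\chi) > 0$ but not non-symmetry.)

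For $m \geq 3$ the inductive step amounts to producing a single vector $e_1$ with $\chi(e_1, e_1) > 0$ such that, writing $W' = \<e_1\>^\triangleright$ and $\chi' = \chi|_{W'}$, we have: (a) $W'$ contains a vector of positive norm, and (b) either $\dim W' = 2$ or $\chi'$ is not symmetric. The form $\chi'$ is automatically non-degenerate (as $e_1$ is non-singular), and $\det(\chi') > 0$: completing $e_1$ to a basis $e_1, f_2, \dots, f_m$ with the $f_i \in W'$ makes the matrix of $\chi$ block-lower-triangular, so $\det(\chi) = \chi(e_1, e_1)\det(\chi')$. Thus once such an $e_1$ is in hand I would conclude by the $m = 2$ case if $\dim W' = 2$, and otherwise by applying the inductive hypothesis to $(W', \chi')$; prepending $e_1$ to the resulting basis of $W'$ works because $e_2, \dots, e_m \in \<e_1\>^\triangleright$ gives $\chi(e_1, e_j) = 0$ for $j \geq 2$.

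To construct $e_1$ I would first invoke \Cref{lemma:non-symmetric-3d} inside a three-dimensional subspace $W_0 \subseteq W$ on which $\chi$ restricts to a non-degenerate, non-symmetric form carrying a positive vector --- taking $W_0 = W$ when $m = 3$, and for $m \geq 4$ extracting such a $W_0$ by a (routine but slightly fussy) choice of a three-dimensional subspace through a positive vector, correcting for possible degeneracy. This produces $v_1, v_2 \in W_0$ with $\chi(v_1, v_1) > 0$, $\chi(v_2, v_2) > 0$, $\chi(v_1, v_2) = 0$, so $e_1 = v_1$ already satisfies (a) because $v_2 \in \<v_1\>^\triangleright$, and it satisfies (b) outright when $m = 3$ or when $\chi|_{\<v_1\>^\triangleright}$ happens to be non-symmetric. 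In the remaining case --- $m \geq 4$ and $\chi|_{\<v_1\>^\triangleright}$ symmetric --- I would perturb the pair $(v_1, v_2)$. Write $\chi = \sigma + \alpha$ with $\sigma$ symmetric and $\alpha$ alternating; since $\chi$ is not symmetric, $\alpha \neq 0$, and for a non-singular $v$ the restriction $\chi|_{\<v\>^\triangleright}$ is symmetric precisely when $\alpha$ vanishes on the hyperplane $\<v\>^\triangleright$. Because an $\alpha$-isotropic subspace has dimension at most $m - \tfrac12\rk(\alpha)$, a hyperplane can be $\alpha$-isotropic only when $\rk(\alpha) = 2$; in our situation $\chi|_{\<v_1\>^\triangleright}$ is symmetric, so this is the case, and the ``bad set'' $B = \{v \in W : \chi|_{\<v\>^\triangleright} \text{ is symmetric}\}$ turns out to be a two-dimensional linear subspace of $W$ (the preimage of the rank-$2$ ``coordinate plane'' of $\alpha$ under the isomorphism $v \mapsto \chi(v, -)$), containing $v_1$. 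Since $m \geq 4$ I may fix $t \notin B$; \Cref{lemma:perturb-orthogonal-pair} supplies $w$ with $\chi(v_1 + at, v_2 + aw) = 0$ for all $a \in \F$, and \Cref{lemma:perturb-positive-vector} ensures $v_1 + at$ and $v_2 + aw$ have positive norm for all small $a$. For such an $a \neq 0$ the vector $e_1 := v_1 + at$ lies outside $B$ (as $v_1 \in B$, $t \notin B$ and $B$ is a subspace), so $\chi|_{\<e_1\>^\triangleright}$ is not symmetric, and $\<e_1\>^\triangleright$ contains the positive vector $v_2 + aw$; this establishes (a) and (b), and the induction closes.

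I expect the main obstacle to be this perturbation argument, and in particular the identification of the ``bad set'' $B$ with a proper (two-dimensional) subspace; a secondary, more routine difficulty is the preliminary extraction, when $m \geq 4$, of a three-dimensional non-degenerate subspace $W_0$ carrying a non-symmetric restriction of $\chi$ and a positive vector, so that \Cref{lemma:non-symmetric-3d} can be brought to bear.
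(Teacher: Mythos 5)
Your proposal is correct and follows essentially the same route as the paper's proof: induction on $\dim W$, the two-dimensional base case via $\det(\chi)>0$, an application of \Cref{lemma:non-symmetric-3d} inside a three-dimensional subspace, and a perturbation via \Cref{lemma:perturb-orthogonal-pair,lemma:perturb-positive-vector} to handle the case where $\chi$ restricts symmetrically to $\<v_1\>^\triangleright$. The one genuine difference is how the perturbation direction is chosen. The paper perturbs $v_1$ simultaneously along every basis vector, obtaining a spanning set $N$, and argues by contradiction: if every $u\in N$ were ``bad'' then the isomorphism $\phi\colon u\mapsto\chi(u,-)$ would send the spanning set $N$ into the $2$-dimensional image of $\psi\colon u\mapsto\gamma(u,-)$, which is impossible. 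You instead identify the bad set exactly: since the hyperplane $\<v_1\>^\triangleright$ is $\alpha$-isotropic, $\rk\alpha=2$, and a hyperplane is $\alpha$-isotropic iff it contains $\operatorname{rad}\alpha$; hence the bad vectors together with $0$ form $\phi^{-1}\bigl((\operatorname{rad}\alpha)^\circ\bigr)$, a $2$-dimensional linear subspace $B$, and you perturb in a single direction $t\notin B$. Both arguments rest on the same underlying linear-algebraic fact --- the rank-$2$ alternating part confines the bad vectors, under $\phi$, to a subspace of codimension at least one --- but yours pins down the bad set as an actual subspace rather than merely a subset of one, which buys a slightly cleaner perturbation step (one direction instead of $m+1$) at the modest cost of justifying the exact identification of $B$. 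The steps you flag as routine (extracting the three-dimensional subspace when $m\ge4$; the paper does this from the triangular basis of \Cref{lemma:basis-with-one-positive-vector}) are indeed straightforward to fill in, and your observation that $\det\chi'>0$ descends to $W'=\<e_1\>^\triangleright$ by block-triangularity matches what the paper leaves implicit.
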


\begin{proof}
  The proof is by induction on $m = \dim W$, the case $m=1$ being
  trivial.  By \Cref{lemma:basis-with-one-positive-vector}, there is a
  basis $e_1, \dotsc, e_m$ such that $\chi(e_1, e_1) > 0$, $\chi(e_i,
  e_i) \neq 0$ for $i \geq 2$, and $\chi(e_i, e_j) = 0$ for $i<j$.  If
  $m=2$, since $\det(\chi) > 0$, we deduce that $\chi(e_2, e_2) > 0$
  and we are done.  Assume from now on that $m \geq 3$.
	
  Since $\chi$ is not symmetric, there exist two indices $2 \leq i < j
  \leq m$ such that at least one of $\chi(e_i, e_1)$, $\chi(e_j,
  e_1)$, $\chi(e_j, e_i)$ is not zero.  Apply
  \Cref{lemma:non-symmetric-3d} to the restriction of $\chi$ to the
  3-dimensional subspace $U = \< e_1, e_i, e_j\>$ and get two
  positive vectors $v_1, v_2 \in U$ such that $\chi(v_1, v_2) = 0$.
  In particular, the subspace $\< v_1 \>^\triangleright$ contains the
  positive vector $v_2$ (here the right orthogonal complement is taken
  in the entire space $W$ with respect to the bilinear form $\chi$).
	
  By
  \Cref{lemma:perturb-orthogonal-pair,lemma:perturb-positive-vector},
  there exists $a \in \F^\times$ such that for all $i=1, \dotsc,
  m$ we have: (1) $\chi(v_1 + a e_i, v_1 + a e_i) > 0$; (2) the
  subspace $\< v_1 + a e_i \>^\triangleright$ contains some
  positive vector $v_2 + a e_i'$.  Let $N = \{ v_1, v_1 +
  a e_1, \dotsc, v_1 + a e_n \}$, and notice that $\< N
  \> = W$.  We are going to prove that there is at least one vector $u
  \in N$ such that $\chi|_{\< u \>^\triangleright}$ is not symmetric.
  Then we are done by applying the induction hypothesis on $\chi|_{\<
    u \>^\triangleright}$.
	
  Suppose by contradiction that $\chi|_{\< u \>^\triangleright}$ is
  symmetric for every $u \in N$.  In other words, the alternating form
  $\gamma(v, w) := \chi(v, w) - \chi(w, v)$ vanishes on the hyperplane
  $\< u \>^\triangleright$ for every $u \in N$.  In particular, the
  rank of $\gamma$ is at most $2$.  However, the rank of $\gamma$ is
  even (because $\gamma$ is alternating) and non-zero (because $\chi$
  is not symmetric), so it is equal to $2$.  For $u \in W$, denote by
  $\alpha_u, \alpha'_u \in W^*$ the linear forms defined by
  $\alpha_u(w) = \chi(u, w)$ and $\alpha_u'(w) = \gamma(u,w)$.  Let
  $\phi, \psi \colon W \to W^*$ be the linear maps given by $\phi(u) =
  \alpha_u$ and $\psi(u) = \alpha_u'$.  Note that $\phi$ is a vector
  space isomorphism because $\chi$ is non-degenerate, whereas $\psi$
  has rank $2$ because $\gamma$ has rank $2$.  For every $u \in N$
  we have $\gamma|_{\< u \>^\triangleright} = 0$, which can be written
  as: $w \in \ker \alpha'_v$ for every $v, w \in \< u
  \>^\triangleright$.  By definition of $\alpha_u$, we have $\< u
  \>^\triangleright = \ker\alpha_u$.  Therefore, for every $u \in N$
  and $v \in \ker\alpha_u$, we have $\ker \alpha_u \subseteq \ker
  \alpha'_v$ and thus $\alpha'_v$ is a scalar multiple of $\alpha_u$.
  This means that, for every $u \in N$, the image of the restriction
  of $\psi$ to the hyperplane $\ker \alpha_u$ is contained in the
  $1$-dimensional subspace $\< \alpha_u \>$.  Since $\psi$ has rank
  $2$, $\alpha_u$ must be in the image of $\psi$.  Then the isomorphism
  $\phi$ sends $N$ inside the image of $\psi$, which is a
  $2$-dimensional subspace of $V^*$.  This is a contradiction, because
  $N$ spans $W$, whereas the image of $\psi$ has codimension $m-2 \geq
  1$ in $W^*$.
\end{proof}

We are now ready to compute the positive reflection length of any
positive isometry.

\begin{theorem}[Positive reflection length]\label{thm:positive-factorizations}
  Let $(V, Q)$ be a non-de\-ge\-ne\-ra\-te quadratic space over a square-dense
  field $\F$.  Assume that $V$ is not negative definite, and let $f
  \in O_+(V)$ be a positive isometry with $f \neq \id$.  If at least
  one of the following conditions holds:
  \begin{enumerate}[(i)]
  \item $\Mov(f)$ is positive definite,
  \item $f$ is not an involution and $\Mov(f)$ is not negative semi-definite,
  \end{enumerate}
  then the positive reflection length of $f$ is equal to $\dim\Mov(f)$.
  Otherwise, it is equal to $\dim\Mov(f) + 2$.
  In particular, every positive isometry is a product of positive
  reflections.
\end{theorem}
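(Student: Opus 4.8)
The plan is to follow the template of \Cref{thm:reflection-length}, upgrading each step so that the reflections used are positive. The lower bound comes for free: a positive reflection factorization is in particular a reflection factorization, so its length is at least the reflection length computed in \Cref{thm:reflection-length}, i.e.\ at least $\dim\Mov(f)$ in case $\Mov(f)$ is not totally singular (which is automatic here since $\chi_f$ non-degenerate and $f\neq\id$ force $\Mov(f)\neq 0$; and if $\Mov(f)$ is totally singular then $\chi_f$ is alternating, but then $f$ is an involution and $\Mov(f)\subseteq V^-\cup\{0\}$ would make it negative semi-definite, so we are in the ``$+2$'' regime). So the whole content is the construction of a direct (or near-direct) factorization into \emph{positive} reflections of the stated minimal length.

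For the main cases (i) and (ii), the key point is to produce a triangular basis of $\Mov(f)$ consisting of positive vectors with respect to $\chi_f$; a repeated application of \Cref{thm:factorization} then yields a direct factorization $f=r_{e_1}\dotsm r_{e_m}$ with each $r_{e_i}$ positive (since $Q(e_i)=\chi_f(e_i,e_i)>0$), of length $m=\dim\Mov(f)$. In case (i), $\Mov(f)$ is positive definite, so \Cref{lemma:triangular-basis} already gives such a basis with all diagonal entries positive. In case (ii), $f$ is not an involution, so by property (v) of \Cref{lemma:wall-form-properties} the Wall form $\chi_f$ is not symmetric; also $\theta(f)$ positive means $\det(\chi_f)>0$ (up to squares, but the sign is well-defined), and $\Mov(f)$ not negative semi-definite provides a vector $u$ with $\chi_f(u,u)=Q(u)>0$. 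This is exactly the hypothesis of \Cref{lemma:positive-basis}, which hands us the desired positive triangular basis. So cases (i) and (ii) reduce cleanly to lemmas already proved; the real work was in \Cref{lemma:positive-basis} (and its engine \Cref{lemma:non-symmetric-3d}).

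For the remaining ``otherwise'' case, the positive reflection length is $\dim\Mov(f)+2$: we must show it is achievable and not beatable. For achievability, pick a positive vector $v\in V$ (possible since $V$ is not negative definite) and set $g=r_vf$. Then by part (b) of \Cref{lemma:reflection-multiplication}, $\Mov(g)=\Mov(f)\oplus\langle v\rangle$, which now contains the positive vector $v$, so $\Mov(g)$ is not negative semi-definite, and $\theta(g)=\theta(r_v)\theta(f)$ is still positive; moreover adding a one-dimensional summand to a Wall form cannot make a non-symmetric form symmetric, and in the excluded case $f$ is an involution with $\chi_f$ symmetric, but $\chi_g$ will be non-symmetric once the extra row/column is non-symmetric — one must check this, choosing $v$ (or rather its image under Wall's construction) so that the off-diagonal block is genuinely asymmetric, which is possible because $V$ is not negative definite and $\dim V>\dim\Mov(f)$ here. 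Then $g$ falls under case (ii), so $g$ is a product of $\dim\Mov(g)=\dim\Mov(f)+1$ positive reflections, hence $f=r_vg$ is a product of $\dim\Mov(f)+2$ positive reflections. For the lower bound: $f$ cannot be written with $\dim\Mov(f)$ positive reflections because any reflection factorization of that length is direct (by the parity/dimension count in \Cref{lemma:reflection-multiplication}) and direct factorizations into positive reflections correspond to positive triangular bases of $\chi_f$, which do not exist here — in the involution subcase $\chi_f=\tfrac12\beta|_{\Mov(f)}$ is symmetric and $\Mov(f)$ is negative semi-definite, so no positive vector exists at all; in the totally singular subcase no direct factorization exists even without the positivity constraint. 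A factorization of length $\dim\Mov(f)+1$ is ruled out by the parity statement at the end of \Cref{lemma:reflection-multiplication}. This completes both bounds and, since in every case we exhibited a positive reflection factorization, proves that every positive isometry is a product of positive reflections.

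I expect the main obstacle to be the bookkeeping in the ``otherwise'' case: verifying that one can choose the auxiliary positive vector $v$ (equivalently, the Wall form of $g=r_vf$) so that $\chi_g$ is non-symmetric and $\Mov(g)$ is not negative semi-definite \emph{simultaneously}, so that $g$ genuinely lands in case (ii). The cleanest way is probably not to multiply by a geometric reflection directly, but to use Wall's parametrization (\Cref{thm:wall-parametrization}) to build $g$ directly: take $W=\Mov(f)\perp\langle v\rangle$ with $Q(v)>0$ and define $\chi_g$ to agree with $\chi_f$ on $\Mov(f)$, to equal $Q(v)$ on $\langle v\rangle$, and to have a prescribed asymmetric cross term; then $g=f_1f_2$ in the notation of \Cref{thm:factorization}, one checks $f=r_vg$ by choosing the cross term so that $f_1=r_v$ inverts cleanly. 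All of this is routine once set up; the rest of the theorem is a direct assembly of \Cref{lemma:triangular-basis}, \Cref{lemma:positive-basis}, \Cref{thm:factorization}, and \Cref{lemma:reflection-multiplication}.
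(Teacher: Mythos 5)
Cases (i) and (ii) of your proposal match the paper's proof essentially verbatim (reduce to \Cref{lemma:triangular-basis} and \Cref{lemma:positive-basis} respectively), so the real question is the ``otherwise'' case, and there you have two genuine gaps plus a factual error.

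\textbf{Upper bound gap.} You correctly identify the crux — finding a positive $v$ with $\chi_{r_v f}$ not symmetric — but you leave it at ``one must check this,'' and the proposed alternative via Wall's parametrization is not actually carried out. The paper's argument is short and you should know it: if $\chi_{r_v f}$ were symmetric, then $\Mov(r_v f) = \Mov(f) \perp \langle v\rangle$, so $v \in \Mov(f)^\perp = \Fix(f)$; since positive vectors span $V$ (\Cref{lemma:perturb-positive-vector}, using that $V$ is not negative definite), if $\chi_{r_vf}$ were symmetric for every positive $v$ then $\Fix(f) = V$ and $f = \id$. Your heuristic ``possible because $V$ is not negative definite and $\dim V > \dim\Mov(f)$'' does not rule out that every positive $v$ happens to lie in $\Fix(f)$; that is exactly what needs a proof. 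Note also that the paper sets up an induction on $\dim\Mov(f)$: if $\Mov(f)$ has a positive vector, peel off that reflection, and invoke the symmetry-breaking step only when $\Mov(f)$ is negative semi-definite. Your non-inductive version must face the symmetry-breaking in the sub-case ``$f$ an involution with $\Mov(f)$ indefinite'' as well, since there $g = r_v f$ could a priori land back in the ``otherwise'' regime.

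\textbf{Lower bound gap and factual error.} Your justification that no positive triangular basis exists covers only two sub-cases: $\Mov(f)$ negative semi-definite (no positive vectors in $\Mov(f)$) and $\Mov(f)$ totally singular (no direct factorization at all). But the negation of (ii) is a disjunction, and you slide from ``involution subcase'' to ``$\Mov(f)$ is negative semi-definite'' as if these were the same, which leaves the sub-case ``$f$ an involution with $\Mov(f)$ indefinite'' unaddressed. The missing argument: if $\chi_f$ is symmetric and $(e_i)$ is a basis with $\chi_f(e_i,e_j)=0$ for $i<j$ and $\chi_f(e_i,e_i)>0$, then by symmetry the basis is orthogonal with positive diagonal, so $\Mov(f)$ is positive definite — contradicting the hypothesis that (i) fails. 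The paper gives this cleanly: any length-$m$ positive factorization yields such a basis, so either $\chi_f$ is symmetric (forcing (i)) or it is not (yielding (ii)). Separately, your parenthetical claim that $\Mov(f)$ totally singular implies $\chi_f$ alternating ``but then $f$ is an involution'' is backwards: in characteristic $0$ a non-degenerate alternating form is never symmetric, so $f$ is \emph{not} an involution there (the conclusion that you are in the ``$+2$'' regime is still right, but only because $\Mov(f)$ is negative semi-definite and hence (ii) fails on its second clause).
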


\begin{proof}
  Let $m = \dim\Mov(f) \geq 1$.  If (i) holds, then $\Mov(f)$ is not
  totally singular and $f$ has a direct factorization as a product of
  reflections by \Cref{thm:reflection-length}. These reflections are
  positive, because $\Mov(f)$ is positive definite.

  If (ii) holds, then $\chi_f$ is not symmetric by property (v) of
  \Cref{lemma:wall-form-properties}, and \Cref{lemma:positive-basis}
  yields a basis $e_1, \dotsc, e_m$ such that $\chi_f(e_i, e_i) > 0$
  for all $i$ and $\chi(e_i, e_j) = 0$ for $i < j$.  By
  \Cref{thm:factorization}, we have $f = r_1 \dotsm r_m$ where $r_i$
  is the reflection with respect to $e_i$.  Therefore, $f$ is a product
  of $m$ positive reflections.
	
  Conversely, if $f$ can be written as a product of $m$ positive
  reflections with respect to some positive vectors $e_1, \dotsc,
  e_m$, then by \Cref{thm:factorization} we have $\chi(e_i, e_i) > 0$
  for all $i$ and $\chi(e_i, e_j) = 0$ for $i < j$.  In particular,
  $\Mov(f)$ contains at least one positive vector.  If $\chi_f$ is
  symmetric, then $\Mov(f)$ is positive definite and (i) holds.  If
  $\chi_f$ is not symmetric, then (ii) holds.  Therefore, if both (i)
  and (ii) do not hold, then every factorization of $f$ as a product of
  positive reflections requires at least $m+2$ reflections.
	
  Finally, we are going to show that any positive isometry $f$ can be
  written as a product of $\leq m+2$ positive reflections.  We do this
  by induction on $m$, the case $m=0$ being trivial.  Let $m \geq 1$.
  If $\Mov(f)$ contains at least one positive vector $u$, then we can
  write $f = r_uf'$ where $\dim \Mov(f') = m-1$ by
  \Cref{lemma:reflection-multiplication}, and proceed by induction.
  Therefore we may assume that $\Mov(f)$ is negative semi-definite.
  We are going to show that there is at least one positive vector $v
  \in V$ such that $\chi_{r_vf}$ is not symmetric.  Notice that
  $\Mov(r_vf) = \Mov(f) \oplus \< v \>$ by
  \Cref{lemma:reflection-multiplication}, so $\Mov(r_vf)$ contains the
  positive vector $v$.  Then \Cref{lemma:positive-basis} can be
  applied to $\chi = \chi_{r_vf}$, yielding a factorization of $r_vf$
  as a product of $m+1$ positive reflections, and thus allowing us to
  write $f$ as a product of $m+2$ positive reflections.
	
  We only need to show that, if $\Mov(f) \neq \{0\}$ is negative
  semi-definite, then there is at least one positive vector $v \in V$ such
  that $\chi_{r_vf}$ is not symmetric.  Let $v$ be any positive
  vector.  Recall that $\Mov(f) = \< v \>^\triangleright$, where the
  right orthogonal complement is taken in $\Mov(r_vf) = \Mov(f) \oplus
  \< v \>$ with respect to the bilinear form $\chi_{r_vf}$.  If
  $\chi_{r_vf}$ is symmetric, then $\Mov(r_v f) = \Mov(f) \perp \< v
  \>$.
  Therefore $v \in \Mov(f)^\perp = \Fix(f)$.
  The set of positive vectors of $V$ is non-empty because $V$ is not
  negative definite, and it spans $V$ by
  \Cref{lemma:perturb-positive-vector}.  If $\chi_{r_v f}$ is
  symmetric for all positive vectors $v \in V$, then $v \in \Fix(f)$ for
  all positive vectors $v$, so $\Fix(f) = V$ and thus $f = \id$, which is a
  contradiction.
\end{proof}

We say that an isometry $f \in O_+(V)$ is \emph{positive-minimal} if
it is a product of $\dim\Mov(f)$ positive reflections.
\Cref{thm:positive-factorizations} provides a characterization of
positive-minimal isometries: an involution is positive-minimal if and
only if its moved space is positive definite; a non-involution is
positive-minimal if and only if its moved space is not negative
semi-definite (i.e., it contains at least one positive vector).

If we replace reflection factorizations with \emph{positive}
reflection factorizations in \Cref{def:partial-order}, we obtain a partial order on the group $O_+(V)$.  This is not simply the
restriction to $O_+(V)$ of the partial order on $O(V)$.  Indeed, if $f
\in O_+(V)$ is minimal but not positive-minimal, then there is a
minimal positive factorization $f = r_1 r_2 g$ with $l(g) = l(f) =
\dim\Mov(f)$, and we have $g \leq f$ in $O_+(V)$ but $g \not\leq f$ in
$O(V)$.  For the same reason, the rank function of $O_+(V)$ is not the
restriction of the rank function of $O(V)$.

If $f \in O_+(V)$ is a positive-minimal isometry, then
\Cref{thm:positive-factorizations} allows us to include the interval $[\id,
  f]$ in $O_+(V)$ into the poset of linear subspaces of $\Mov(f)$, in
the same spirit as \Cref{thm:intervals}.

\section{Isometries of the hyperbolic space}
\label{sec:hyperbolic-space}

In this section, we describe reflection length and intervals in the
isometry group of the hyperbolic space $\H^n$.  We follow the notation
of \cite{cannon1997hyperbolic}.

Let $V = \R^{n+1}$, with the quadratic form $Q(x) = x_1^2 +
\dotsb + x_n^2 - x_{n+1}^2$.  Then $(V, Q)$ is a real quadratic space
of signature $(n, 1)$.  The \textit{hyperboloid model} of the hyperbolic space is
\[ \H^n = \{ x \in V \mid Q(x) = -1 \text{ and } x_{n+1} > 0 \}. \]
The quadratic form $Q$
induces a (positive definite) Riemannian metric on $\H^n$.  The
condition $x_{n+1} > 0$ selects the upper sheet of the hyperboloid $\{
Q(x) = -1 \}$.  Every isometry of $\H^n$ uniquely extends to an
isometry of $(V, Q)$; conversely, every isometry of $(V, Q)$ that
fixes $\H^n$ (as a set) restricts to an isometry of $\H^n$.

\begin{lemma}
  The subgroup of $O(V)$ that fixes $\H^n$ (as a set) coincides with
  the index-two subgroup $O_+(V)$ of the positive isometries.
\end{lemma}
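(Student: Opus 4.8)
The plan is to establish a double inclusion. Write $O_\H$ for the subgroup of $O(V)$ consisting of isometries that fix $\H^n$ setwise; we want $O_\H = O_+(V)$. Both are index-two subgroups of $O(V)$ (the former because an isometry of $\{Q(x)=-1\}$ either preserves or swaps the two sheets, giving a homomorphism $O(V)\to\Z_2$ whose kernel is $O_\H$, and the latter by the discussion following the definition of $O_+(V)$, using that $V$ has signature $(n,1)$ so it is not positive definite). Since both are index two, it suffices to show one inclusion, say $O_\H\subseteq O_+(V)$, or equivalently to exhibit a single reflection lying in exactly one of the two subgroups and check it lies in the complement of the other — but the cleanest route is to show the two subgroups agree by evaluating each on a convenient generating-type element.

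\medskip

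\noindent First I would handle a negative reflection and a positive reflection separately. Let $e_{n+1}$ be the last standard basis vector, so $Q(e_{n+1}) = -1 < 0$; the reflection $r_{e_{n+1}}$ is negative, and $\theta(r_{e_{n+1}}) = [-1]$, which is negative, so $r_{e_{n+1}}\notin O_+(V)$. On the other hand $r_{e_{n+1}}$ sends $(x_1,\dots,x_n,x_{n+1})$ to $(x_1,\dots,x_n,-x_{n+1})$, hence swaps the two sheets of $\{Q(x)=-1\}$ and does \emph{not} fix $\H^n$; so $r_{e_{n+1}}\notin O_\H$. Next, a positive reflection $r_v$ with $Q(v)>0$: I claim $r_v\in O_\H$. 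Since $Q(v)>0$, the vector $v$ does not lie on either sheet, and $r_v$ fixes the hyperplane $\langle v\rangle^\perp$, which is a nondegenerate subspace of signature $(n-1,1)$ meeting each sheet; restricted to the line $\langle v\rangle$, $r_v$ is $-\id$, but this does not affect the sign of the last coordinate of a point of $\H^n$ enough to flip sheets — more carefully, $\H^n$ is connected, $r_v$ maps it into $\{Q(x)=-1\}$, and $r_v(\H^n)$ contains a point with positive last coordinate (take any point of $\H^n\cap\langle v\rangle^\perp$, which is nonempty since the perp has signature $(n-1,1)$, and is fixed), so by connectedness $r_v(\H^n)\subseteq\H^n$, i.e. $r_v\in O_\H$. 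Thus $r_v\in O_+(V)$ as well, trivially, since positive reflections lie in $O_+(V)$ by definition.

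\medskip

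\noindent With these two computations in hand, I would conclude as follows. By \Cref{thm:reflection-length}, every isometry $f\in O(V)$ is a product of reflections; since the positive reflections together with the single negative reflection $r_{e_{n+1}}$ generate $O(V)$ (any negative reflection $r_w$ can be written, using that $O_+(V)$ has index two and the above, as $r_{e_{n+1}}$ times an element of $O_+(V)$, itself a product of positive reflections), it suffices to check the equality $O_\H = O_+(V)$ on this generating set. We have shown $r_v\in O_\H\cap O_+(V)$ for every positive $v$, and $r_{e_{n+1}}\notin O_\H$ and $r_{e_{n+1}}\notin O_+(V)$. Hence for any $f = s_1\cdots s_k$ written as a product of reflections each of which is either positive or equal to $r_{e_{n+1}}$, the membership $f\in O_\H$ and the membership $f\in O_+(V)$ are each governed by the parity of the number of factors equal to $r_{e_{n+1}}$, and therefore are equivalent. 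This proves $O_\H = O_+(V)$.

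\medskip

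\noindent \textbf{Main obstacle.} The delicate point is the connectedness argument showing that a positive reflection fixes $\H^n$ setwise rather than swapping sheets: one must observe that $\langle v\rangle^\perp$ has signature $(n-1,1)$ and hence genuinely intersects $\H^n$, providing a fixed point with positive last coordinate, and then invoke continuity/connectedness of $\H^n$ to rule out the sheet-swap. (Over a general square-dense field one would instead argue algebraically that the sign of the last coordinate is preserved, but here $\F=\R$ and the topological argument is cleanest.) Everything else is bookkeeping with the index-two structure and the spinor norm homomorphism of \Cref{sec:factorizations}.
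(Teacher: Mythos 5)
Your proof is correct and follows essentially the same strategy as the paper: note both subgroups have index two, so one inclusion suffices; observe $O_+(V)$ is generated by positive reflections; and show a positive reflection $r_v$ fixes $\H^n$ because $\langle v\rangle^\perp$ has signature $(n-1,1)$ and therefore meets $\H^n$, giving a fixed point that forces sheet preservation. The extra bookkeeping you do with $r_{e_{n+1}}$ and the parity of negative factors is unnecessary once one inclusion is established (and the generating-set claim there silently relies on \Cref{thm:positive-factorizations}, not \Cref{thm:reflection-length}, to write a positive isometry as a product of positive reflections), but it introduces no error.
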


\begin{proof}
  Both subgroups have index $2$, so it is enough to show one
  containment.  By \Cref{thm:positive-factorizations}, the subgroup
  $O_+(V)$ is generated by the positive reflections $r \in O(V)$, and
  therefore it is enough to show that every positive reflections fixes
  $\H^n$.  If $v \in V$ is a positive vector, then $\< v \>^\perp$ has
  signature $(n-1, 1)$, so it intersects $\H^n$.  Therefore $r_v$
  fixes at least one point of $\H^n$, so it fixes $\H^n$ as a set.
\end{proof}

Reflections in the hyperbolic space $\H^n$ are restrictions of
positive reflections of $(V, Q)$.  Therefore, the study of reflection
length and intervals in the isometry group of $\H^n$ reduces to the
study of positive reflection length and intervals in $O_+(V)$.  This
is exactly the setting of \Cref{sec:positive-factorizations}.
It turns out that every isometry of $\H^n$ is positive-minimal.

\begin{theorem}\label{thm:hyperbolic-reflection-length}
  The positive reflection length of an isometry $f \in O_+(V)$ is
  equal to $\dim\Mov(f)$.
\end{theorem}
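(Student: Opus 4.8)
The plan is to apply \Cref{thm:positive-factorizations} and show that for a quadratic space of signature $(n,1)$, neither of the two exceptional cases (those that would force positive reflection length $\dim\Mov(f)+2$) can occur for a positive isometry $f \neq \id$. Recall from that theorem that the positive reflection length of $f$ equals $\dim\Mov(f)$ precisely when $\Mov(f)$ is positive definite, or when $f$ is not an involution and $\Mov(f)$ is not negative semi-definite; otherwise it equals $\dim\Mov(f)+2$. So it suffices to rule out the bad configurations: (1) $f$ an involution with $\Mov(f)$ not positive definite, and (2) $f$ a non-involution with $\Mov(f)$ negative semi-definite.

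First I would handle the constraint coming from the signature. Since $(V,Q)$ has signature $(n,1)$, any subspace $W \subseteq V$ on which $\beta$ is negative semi-definite must be at most $1$-dimensional: a $2$-dimensional negative semi-definite subspace, being contained (after extending to a maximal such) in something of dimension at most $\dim V^- = 1$, is impossible. But $\chi_f$ is non-degenerate on $\Mov(f)$, so $\dim\Mov(f) \geq 1$, and if $f$ is not the identity and $\Mov(f)$ is $1$-dimensional then $f$ is a reflection, hence positive (as $f \in O_+(V)$), hence $\Mov(f)$ is positive definite and case (i) of \Cref{thm:positive-factorizations} applies. This already eliminates case (2): if $f$ is a non-involution then $\dim\Mov(f)\geq 2$ (a $1$-dimensional moved space gives a reflection, which is an involution), so $\Mov(f)$ cannot be negative semi-definite.

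It remains to eliminate case (1): $f$ a non-trivial involution whose moved space is not positive definite. For an involution, property (v) of \Cref{lemma:wall-form-properties} together with property (i) gives $\chi_f = \tfrac12\beta|_{\Mov(f)}$, so the signature of $\chi_f$ on $\Mov(f)$ is the signature of $\beta$ restricted there. If $\Mov(f)$ is not positive definite, then $\beta|_{\Mov(f)}$ has a nonpositive direction; since $\beta$ is non-degenerate on $\Mov(f)$ (because $\chi_f$ is), it has a genuinely negative direction, so $\Mov(f)$ contains a negative definite line $\langle w\rangle$, and then $\langle w\rangle^\perp$ has signature $(n-1,0)$ — i.e. it is positive definite — inside $V$... wait, that's automatic and not a contradiction by itself; the real point is the spinor norm. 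The determinant of $\chi_f = \tfrac12\beta|_{\Mov(f)}$ has sign $(-1)^{s}$ where $s$ is the number of negative eigenvalues of $\beta|_{\Mov(f)}$, and since $V$ has only a $1$-dimensional negative part we get $s \leq 1$. If $\Mov(f)$ is not positive definite then $s = 1$, so $\det(\chi_f) < 0$, meaning $\theta(f)$ is negative — contradicting $f \in O_+(V)$. Hence an involution in $O_+(V)$ has positive definite moved space, so case (i) of \Cref{thm:positive-factorizations} applies.

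Combining the two cases: every positive isometry $f\neq\id$ falls under condition (i) or (ii) of \Cref{thm:positive-factorizations}, so its positive reflection length is $\dim\Mov(f)$; and the formula trivially holds for $f=\id$. The main obstacle is getting the signature bookkeeping exactly right — in particular, recognizing that the sole negative dimension forces $s\le 1$ and hence pins down the sign of $\det\chi_f$, which is what connects "not positive definite" to "negative spinor norm" and thereby excludes the exceptional involutions.
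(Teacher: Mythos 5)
Your proposal is correct, but it takes a genuinely different route from the paper. The paper proves \Cref{thm:hyperbolic-reflection-length} by a short direct induction on $\dim\Mov(f)$: if $\dim\Mov(f)\geq 2$, a dimension count shows $\Mov(f)$ meets the positive definite hyperplane $\{x_{n+1}=0\}$ in a non-zero vector $v$, and splitting off $r_v$ via \Cref{thm:factorization} reduces to the case $\dim\Mov(f)-1$; the base cases $k=0,1$ are immediate. Your approach instead invokes the general classification of \Cref{thm:positive-factorizations} and rules out the two exceptional configurations using the signature constraint. This is more machinery but also more conceptual: it makes transparent exactly why no isometry of $\H^n$ can suffer the ``$+2$'' penalty, and in particular it isolates the fact that a positive involution in signature $(n,1)$ must have positive definite moved space, which is a nice observation in its own right (the spinor-norm argument via $\chi_f=\tfrac12\beta|_{\Mov(f)}$ is correct and clean). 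The paper's induction is shorter and self-contained; yours better explains the ``why.''

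One step should be tightened. To eliminate case (2) you assert that a negative semi-definite subspace $W\subseteq V$ has $\dim W\leq 1$, justified by ``being contained (after extending to a maximal such) in something of dimension at most $\dim V^-=1$.'' As written this is circular: that a \emph{maximal} negative semi-definite subspace has dimension $\leq\dim V^-$ is exactly what needs proving (and is not literal containment in $V^-$, since negative semi-definite subspaces can contain singular vectors, which $V^-$ does not). The claim is true, but the cleanest argument is the same dimension count the paper uses: if $\dim W\geq 2$ then $W\cap V^+$ is non-zero (as $\dim V^+=n$ and $\dim V=n+1$), giving a positive vector in $W$, a contradiction. With that repair the proof goes through.
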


\begin{proof}
  We prove this by induction on $k = \dim\Mov(f)$, the case $k=0$ (the
  identity) being trivial.  If $k = 1$, then $f$ is a positive
  reflection.  If $k \geq 2$, then $\Mov(f)$ intersects the hyperplane
  $\{ x_{n+1} = 0 \}$ non-trivially, so it contains at least one
  positive vector $v$.  By \Cref{thm:factorization}, there is a direct
  factorization $f = r_v g$.  Then $\dim\Mov(g) = k-1$, and $g$ can be
  written as a product of $k-1$ positive reflections by induction.
\end{proof}

We are then able to obtain a clean description of all intervals $[\id,
  f]$ in $O_+(V)$.

\begin{theorem}\label{thm:hyperbolic-space-intervals}
  Let $f \in O_+(V)$.  The interval $[\id, f]$ in $O_+(V)$ is
  isomorphic to the poset of linear subspaces $U \subseteq \Mov(f)$
  such that $\det(\chi_f|_U) > 0$.
\end{theorem}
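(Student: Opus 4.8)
The plan is to identify the interval $[\id,f]$ in $O_+(V)$ with a suitable poset of subspaces of $\Mov(f)$, reusing the machinery of \Cref{thm:intervals} and \Cref{thm:positive-factorizations}, and to exploit the very special feature of the signature $(n,1)$ case: by \Cref{thm:hyperbolic-reflection-length} \emph{every} isometry of $O_+(V)$ is positive-minimal, so the pathologies that forced conditions (i) and (ii) in \Cref{thm:intervals} disappear. Concretely, I would first establish that if $g \in [\id,f]$ in $O_+(V)$, then $g$ is again in $O_+(V)$ and is positive-minimal, hence $l(g) = \dim\Mov(g)$ and $g^{-1}f$ is positive-minimal with $l(g^{-1}f) = \dim\Mov(f) - \dim\Mov(g)$; this gives a direct factorization $f = g\cdot(g^{-1}f)$, so by \Cref{thm:factorization} $\Mov(g)$ is a subspace of $\Mov(f)$ with $\chi_g = \chi_f|_{\Mov(g)}$ non-degenerate, and in particular $\det(\chi_f|_{\Mov(g)}) \neq 0$. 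The positivity of the spinor norm $\theta(g)$ — which holds because $g \in O_+(V)$ — together with $\theta(g) = [\det(\chi_f|_{\Mov(g)})]$, then forces $\det(\chi_f|_{\Mov(g)}) > 0$. So $g \mapsto \Mov(g)$ lands in the claimed poset.

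For the inverse direction, given a subspace $U \subseteq \Mov(f)$ with $\det(\chi_f|_U) > 0$, I would first note that $\chi_f|_U$ is non-degenerate, so by \Cref{thm:factorization} there is a direct factorization $f = f_1 f_2$ with $(\Mov(f_1),\chi_{f_1}) = (U, \chi_f|_U)$ and $\Mov(f_2) = U^\triangleright$. The key points to check are: (1) $f_1 \in O_+(V)$, which follows from $\theta(f_1) = [\det(\chi_f|_U)] > 0$; (2) $f_2 \in O_+(V)$, which follows since $\theta(f_2) = \theta(f_1^{-1})\theta(f) = \theta(f_1)^{-1}\theta(f) > 0$ using that $\theta$ is a homomorphism and $\theta(f) > 0$; (3) both $f_1$ and $f_2$ are positive-minimal by \Cref{thm:hyperbolic-reflection-length}, so $l(f_1) + l(f_2) = \dim U + \dim U^\triangleright = \dim\Mov(f) = l(f)$, whence $f_1 \leq f$ in $O_+(V)$. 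Define $\phi(U) = f_1$; then $\Mov(\phi(U)) = U$ by construction, and $\phi(\Mov(g)) = g$ follows from Wall's parametrization (\Cref{thm:wall-parametrization}) since both sides have moved space $\Mov(g)$ and Wall form $\chi_f|_{\Mov(g)} = \chi_g$, exactly as in the proof of \Cref{thm:intervals}.

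Finally, I would check that $g \mapsto \Mov(g)$ is an \emph{order isomorphism}, not merely a bijection — this is where the hyperbolic case is genuinely better than \Cref{thm:intervals}, and the argument mirrors the end of the proof of \Cref{cor:anisotropic-intervals}. If $g \leq g'$ in $[\id,f]$, then $\Mov(g) \subseteq \Mov(g')$ by (the $O_+$-analogue of) \Cref{lemma:isometries-below-f}. Conversely, suppose $g, g' \in [\id,f]$ with $\Mov(g) \subseteq \Mov(g')$; set $h = g^{-1}g'$. Since $\chi_{g'} = \chi_f|_{\Mov(g')}$ and $\chi_g = \chi_f|_{\Mov(g)}$ agree on $\Mov(g)$, and $\Mov(g)$ is $\chi_{g'}$-non-degenerate (as $\det(\chi_f|_{\Mov(g)}) \neq 0$), \Cref{thm:factorization} gives a direct factorization $g' = g\,h$ with $\Mov(h) = \Mov(g)^{\triangleright}$ taken inside $\Mov(g')$ and $\theta(h) = \theta(g)^{-1}\theta(g') > 0$, so $h \in O_+(V)$ is positive-minimal; then $l(g') = l(g) + l(h)$, i.e.\ $g \leq g'$. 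The main obstacle, and the one point that needs care, is verifying the $O_+$-version of \Cref{lemma:isometries-below-f}(a)–(c): that $g \leq f$ in $O_+(V)$ implies $\Mov(g) \subseteq \Mov(f)$ and $\chi_g = \chi_f|_{\Mov(g)}$. This should follow by the same inductive argument as in the original lemma, peeling off positive reflections one at a time using part (b) of \Cref{lemma:reflection-multiplication} and \Cref{thm:hyperbolic-reflection-length} to know that every partial product along a minimal positive factorization of $f$ is again positive-minimal with the expected moved space — but it must be stated and invoked explicitly, since \Cref{lemma:isometries-below-f} as written is about the $O(V)$-order, not the $O_+(V)$-order.
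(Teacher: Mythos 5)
Your proposal is correct, and its ingredients --- positive-minimality from \Cref{thm:hyperbolic-reflection-length}, Wall's parametrization, the factorization theorem, and the spinor-norm criterion --- are exactly the ones the paper uses. The one genuine difference is organizational: the paper first proves that the $O_+(V)$-interval coincides with $[\id,f]\cap O_+(V)$ (using that in signature $(n,1)$ every isometry is minimal \emph{and} every positive isometry is positive-minimal, so minimal positive factorizations are also minimal ordinary factorizations), and then invokes \Cref{thm:intervals} wholesale, noting that conditions (i)--(iii) are subsumed by $\det(\chi_f|_U)>0$; whereas you reconstruct the bijection and its inverse from scratch. The paper's route is a bit shorter because it reuses \Cref{thm:intervals} and \Cref{lemma:isometries-below-f} as stated, with no need for an $O_+$-analogue; your route is more self-contained.

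One remark on the ``main obstacle'' you flag at the end: you do not actually need a full $O_+$-analogue of \Cref{lemma:isometries-below-f}. Your own first paragraph already supplies what is needed, more cheaply. If $g\leq_+ g'$ with both positive-minimal, then $l_+(g')=l_+(g)+l_+(g^{-1}g')$ gives $\dim\Mov(g)+\dim\Mov(g^{-1}g')=\dim\Mov(g')$; combined with the a priori inclusion $\Mov(g')\subseteq\Mov(g)+\Mov(g^{-1}g')$, this forces $\Mov(g')=\Mov(g)\oplus\Mov(g^{-1}g')$, hence $\Mov(g)\subseteq\Mov(g')$ and, by the converse clause of \Cref{thm:factorization}, $\chi_g=\chi_{g'}|_{\Mov(g)}$. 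No reflection-by-reflection induction is required. (This dimension-count argument is also what implicitly underlies the paper's reduction step.)
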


\begin{proof}
  By \Cref{thm:hyperbolic-reflection-length}, we have that $f$ is
  positive-minimal.  Therefore, all minimal length factorizations of
  $f$ into positive reflections are direct factorizations.  In
  particular, the interval $[\id, f]$ in $O_+(V)$ is contained in the
  interval $[\id, f]$ in the whole group $O(V)$.  To avoid confusion,
  denote by $[\id, f]_+$ the interval in $O_+(V)$.  If $g \in [\id,
    f]$ is a positive isometry, then $h = g^{-1}f$ is also positive,
  and $g$ and $h$ are positive-minimal by
  \Cref{thm:hyperbolic-reflection-length}.  Therefore $g \in [\id,
    f]_+$.  This shows that $[\id, f]_+ = [\id, f] \cap O_+(V)$.
	
  By \Cref{thm:intervals}, the map $g \mapsto \Mov(g)$ is a bijection
  between $[\id, f]_+$ and the poset of linear subspaces $U \subseteq
  \Mov(f)$ such that: $U$ satisfies conditions (i)-(iii) of
  \Cref{thm:intervals}; (iv) $\det(\chi_f|_U) > 0$ (this is the same
  as saying that the preimage of $U$ is a positive isometry).  Since
  the signature of $V$ is $(n, 1)$, the totally singular subspaces
  have dimension $0$ or $1$, so conditions (i) and (ii) are implied by
  condition (iii).  In addition, we can disregard condition (iii) as
  it is implied by (iv).  Putting everything together, the map $g
  \mapsto \Mov(g)$ is a bijection between $[\id, f]_+$ and the poset of
  linear subspaces $U \subseteq \Mov(f)$ satisfying $\det(\chi_f|_U) >
  0$.
	
  If $g \leq g'$ in $[\id, f]_+$, then $g \leq g'$ in $[\id, f]$, and
  thus $\Mov(g) \subseteq \Mov(g')$ by \Cref{thm:intervals}.
  Conversely, suppose that we have $g, g' \in [\id, f]_+$ such that $\Mov(g)
  \subseteq \Mov(g')$.
  By \Cref{lemma:isometries-below-f}, $\chi_g$ and $\chi_{g'}$ are the restrictions of $\chi_f$ to $\Mov(g)$ and $\Mov(g')$, respectively.
  Then $\chi_g = \chi_{g'}|_{\Mov(g)}$, so there is a direct factorization
  $g' = gh$ and $h$ is positive-minimal by
  \Cref{thm:hyperbolic-reflection-length}.  Therefore $g \leq g'$ in
  $[\id, f]_+$.  This shows that the bijection $g \mapsto \Mov(g)$ is
  a poset isomorphism.
\end{proof}

Notice that \Cref{thm:hyperbolic-space-intervals} gives a poset
isomorphism, whereas \Cref{thm:intervals} only gives an
order-preserving bijection.
A counterexample like the one in
\Cref{example:direct-factorization} cannot occur in this context,
since all positive isometries are positive-minimal.
Indeed, for \Cref{example:direct-factorization} to arise, the Witt index of the ambient space $V$ needs to be at least $2$ (in other words, over an ordered field, the signature needs to be $(p, q)$ with $p, q \geq 2$).

It is also true that all isometries of $O(V)$ are minimal, by
\Cref{thm:reflection-length}.  Indeed, the only non-trivial totally
singular subspaces are one-dimensional, and they do not arise as moved
spaces of any isometry, because the Wall form would be identically
zero.

Recall that, if we interpret the hyperboloid model as lying in the projective space
$\P(V)$, the singular lines $\< v \> \subseteq \{ Q(x) = 0 \}$ can be
interpreted as ``points at infinity'' of the hyperbolic space $\H^n$.
Then the isometries of $\H^n$ can be classified into three types:
\emph{elliptic} isometries, that fix at least one point of $\H^n$;
\emph{parabolic} isometries, that fix no point of $\H^n$ and fix
exactly one point at infinity; \emph{hyperbolic} isometries, that fix
no point of $\H^n$ and fix two points at infinity.  See \cite[Section
  12]{cannon1997hyperbolic}.  We now rewrite this classification in
terms of fixed space and moved space.

\begin{definition}
  An isometry $f \in O_+(V)$ is
  \begin{itemize}
  \item \emph{elliptic} if $\Fix(f)$ contains a negative vector
    (i.e., it is not positive semi-definite);
  \item \emph{parabolic} if $\Fix(f)$ is positive semi-definite but
    not positive definite;
  \item \emph{hyperbolic} if $\Fix(f)$ is positive definite.
  \end{itemize}
\end{definition}

\begin{lemma}\label{lemma:hyperbolic-isometries-fix-mov}
  Let $f \in O_+(V)$.  We have that $\Fix(f) \cap
  \Mov(f) = \{0\}$ if $f$ is elliptic or hyperbolic, whereas $\Fix(f)
  \cap \Mov(f)$ is a singular line if $f$ is parabolic.  In
  addition:
  \begin{itemize}
  \item $f$ is elliptic if and only if $\Mov(f)$ is positive definite; 
  \item $f$ is parabolic if and only if $\Mov(f)$ is positive
    semi-definite but not positive definite; 
  \item $f$ is hyperbolic if and only if $\Mov(f)$ contains a negative
    vector. 
  \end{itemize}
\end{lemma}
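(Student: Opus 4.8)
The plan is to establish the relationship $\Fix(f)^\perp = \Mov(f)$ from \Cref{lemma:fix-move-orthogonal} and then translate all statements about $\Fix(f)$ into statements about $\Mov(f)$ using the signature constraint that $V$ has signature $(n,1)$. The key structural fact is that, because $V$ contains only a one-dimensional negative part, the quadratic form $Q|_W$ on any subspace $W$ and its behavior on $W^\perp$ are tightly coupled: if $W$ is positive definite then $W^\perp$ has signature $(\dim W^\perp - 1, 1)$ (so contains a negative vector); if $W$ is positive semi-definite with a nonzero singular radical then $W^\perp$ is again positive semi-definite but not positive definite; and if $W$ contains a negative vector then $W^\perp$ is positive definite. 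I would first prove this "orthogonal complement dictionary" as the central lemma, then apply it with $W = \Fix(f)$ (so $W^\perp = \Mov(f)$) to get the three bulleted equivalences.

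Concretely, I would proceed as follows. First, recall $\Fix(f) \cap \Mov(f) = \Fix(f) \cap \Fix(f)^\perp$, which is precisely the radical of $\beta|_{\Fix(f)}$; a subspace of a signature-$(n,1)$ space has a radical of dimension at most $1$, and the radical is automatically totally singular, so $\Fix(f)\cap\Mov(f)$ is either $\{0\}$ or a singular line. To decide which, observe that $\Fix(f)$ being positive definite or containing a negative vector both force $\beta|_{\Fix(f)}$ to be non-degenerate (a positive definite form is non-degenerate; a form on a signature-$(n,1)$ space that has a negative vector "uses up" the unique negative direction and is non-degenerate on a subspace only if... here I need to be a little careful — actually the cleanest argument is: $\beta|_W$ is degenerate iff $W\cap W^\perp \neq \{0\}$ iff $W$ contains a singular vector lying in $W^\perp$). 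For the elliptic case ($\Fix(f)$ has a negative vector) and the hyperbolic case ($\Fix(f)$ positive definite), one checks $\beta|_{\Fix(f)}$ is non-degenerate, so the intersection is $\{0\}$; for the parabolic case, $\Fix(f)$ is positive semi-definite but not positive definite, which exactly means its radical is a nonzero (necessarily singular, necessarily one-dimensional) line, so $\Fix(f)\cap\Mov(f)$ is that singular line.

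For the three equivalences, I would argue each direction via the orthogonal complement dictionary applied to $W=\Fix(f)$, $W^\perp=\Mov(f)$. If $f$ is elliptic, $\Fix(f)$ contains a negative vector, hence (dictionary) $\Mov(f)$ is positive definite; conversely if $\Mov(f)$ is positive definite then $\Fix(f)=\Mov(f)^\perp$ contains a negative vector since the total negative index is $1$ and $\Mov(f)$ carries none of it. If $f$ is hyperbolic, $\Fix(f)$ is positive definite, so $\Mov(f)=\Fix(f)^\perp$ inherits the one negative direction and contains a negative vector; conversely if $\Mov(f)$ has a negative vector then $\Fix(f)$ is positive definite since it is the orthogonal complement of a subspace already containing the unique negative direction. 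The parabolic case follows by elimination, or directly: $\Fix(f)$ positive semi-definite but not positive definite means its radical is a singular line contained in $\Mov(f)$; the complementary statement about $\Mov(f)$ comes out symmetrically.

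The main obstacle is getting the orthogonal complement dictionary exactly right for semi-definite subspaces — the positive definite and negative-vector cases are routine consequences of non-degeneracy plus counting the negative index, but the semi-definite case requires keeping track of the radical. The clean way to handle it is to pick a Witt decomposition of $W$ adapted to its radical: write $W = \Rad(W) \perp W'$ with $W'$ non-degenerate (so $W'$ is definite of some sign, and here necessarily positive definite in the parabolic case), extend $\Rad(W)$ to a hyperbolic plane inside $V$, and read off the signature of $W^\perp$ from the complementary Witt data. I expect this bookkeeping to be the only place where real care is needed; everything else is a direct translation through \Cref{lemma:fix-move-orthogonal} and the fact that the negative index of $V$ equals $1$.
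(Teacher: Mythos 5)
Your proposal is correct and it does take a genuinely different route from the paper in one place, namely the elliptic case of the first claim. The paper proves $\Fix(f)\cap\Mov(f)=\{0\}$ for elliptic $f$ by conjugating so that $f$ fixes $e_{n+1}$, observing that $f$ then preserves the \emph{Euclidean} form $Q_E$, and invoking \Cref{lemma:fix-move-orthogonal} for $Q_E$. You instead argue directly that $\beta|_{\Fix(f)}$ is non-degenerate whenever $\Fix(f)$ contains a negative vector $v$: any radical vector $w$ is $\beta$-orthogonal to $v$, hence lies in the positive definite space $\<v\>^\perp$, and since $Q(w)=0$ this forces $w=0$. This is more self-contained and arguably cleaner, since it avoids the side fact that hyperbolic isometries with a fixed point are Euclidean rotations, and it treats the hyperbolic case ($\Fix(f)$ positive definite) by the same non-degeneracy reasoning. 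The remaining pieces — the one-dimensional bound on $\Fix(f)\cap\Mov(f)$ via total singularity, and the three equivalences via the orthogonal-complement dictionary in signature $(n,1)$ — match the paper in substance, though you organize them more systematically. One small tightening worth making explicit: your dictionary entry for the semi-definite case (if $W$ is positive semi-definite with nonzero radical then so is $W^\perp$) admits a short direct proof that avoids the Witt-decomposition bookkeeping you flag as the main obstacle — if $W^\perp$ had a negative vector $v$ then $W=(W^\perp)^\perp\subseteq\<v\>^\perp$ would be positive definite, a contradiction; and $W^\perp\supseteq W\cap W^\perp$ already contains the nonzero radical, so it is not positive definite.
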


\begin{proof}
  We have that $\Mov(f) = \Fix(f)^\perp$ by
  \Cref{lemma:fix-move-orthogonal}.  Therefore $\Fix(f) \cap \Mov(f)$
  is a totally singular subspace, so its dimension is at most $1$.
  If $\Fix(f) \cap \Mov(f)$ contains a non-trivial singular vector
  $v$, then $\Fix(f)$ is not positive definite, so $f$ is elliptic or
  parabolic.
	
  If $f$ is elliptic, then up to conjugating by an isometry in
  $O_+(V)$ we may assume that $f$ fixes the point $e_{n+1} = (0,
  \dotsc, 0, 1) \in \H^n$.  Then $f$ is an isometry also with respect
  to the standard (positive definite) Euclidean quadratic form $Q_E(x)
  = x_1^2 + \dotsc + x_{n+1}^2$.  Therefore $\Fix(f)$ and $\Mov(f)$
  are $Q_E$-orthogonal by \Cref{lemma:fix-move-orthogonal}, and in
  particular $\Fix(f) \cap \Mov(f) = \{0 \}$.
  If $f$ is parabolic, then $\Fix(f)$ contains a singular line, so $\Fix(f) \cap \Mov(f)$ is a singular line.
  This finishes the proof of the first part of the statement.
	
  We now prove the classification in terms of the moved space.  If $f$
  is elliptic, then $\Fix(f)$ contains a negative vector and $V =
  \Fix(f) \perp \Mov(f)$, so $\Mov(f)$ is positive definite.
  Similarly, if $f$ is hyperbolic, then $\Fix(f)$ is positive definite
  and $V = \Fix(f) \perp \Mov(f)$, so $\Mov(f)$ contains a negative
  vector.  If $f$ is parabolic, then $\Mov(f)$ contains a singular
  vector and so it is not positive definite.  Finally, if $\Mov(f)$
  contains a negative vector $w$, then $\< w \>^\perp$ is positive
  definite and $\Fix(f) = \Mov(f)^\perp \subseteq \< w \>^\perp$, so
  $f$ is not parabolic.
\end{proof}

For elliptic isometries, the description of the intervals given by
\Cref{thm:hyperbolic-space-intervals} becomes particularly simple
thanks to the following observation.

\begin{lemma}\label{lemma:wall-form-positive-definite}
  Let $f \in O_+(V)$.  If $U \subseteq \Mov(f)$
  is a positive definite subspace, then $\det(\chi_f|_{U}) > 0$.
\end{lemma}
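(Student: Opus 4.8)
The plan is to reduce the statement to an elementary fact about real matrices: \emph{if $S$ is a positive definite symmetric matrix and $A$ is antisymmetric, then $\det(S+A) > 0$}. To set this up, I would fix a basis $u_1, \dots, u_k$ of $U$ and let $M$ be the $k \times k$ real matrix with entries $M_{ij} = \chi_f(u_i, u_j)$. The sign of $\det M$ does not depend on the chosen basis (a change of basis multiplies the determinant by a square), so it suffices to prove $\det M > 0$. Decomposing $M = S + A$ into its symmetric and antisymmetric parts, property (i) of \Cref{lemma:wall-form-properties} gives $S_{ij} = \tfrac12\beta(u_i, u_j)$; since $Q(v) = \tfrac12\beta(v,v)$ over $\R$ and $U$ is positive definite, $S$ is the (positive definite, symmetric) matrix of $\tfrac12\beta|_U$.

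For the matrix fact I would use a deformation argument. Consider the family $M_t = S + tA$ for $t \in [0,1]$. For every nonzero real column vector $x$ we have $x^\top M_t x = x^\top S x > 0$, because $x^\top A x = 0$; in particular $M_t$ has trivial kernel and is therefore invertible for all $t$. Hence $t \mapsto \det M_t$ is a continuous, nowhere-vanishing real function on $[0,1]$ with $\det M_0 = \det S > 0$, so $\det M = \det M_1 > 0$ by the intermediate value theorem.

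I do not anticipate a real obstacle: the only points requiring care are the basis-independence of the sign of $\det(\chi_f|_U)$ and the correct identification, via \Cref{lemma:wall-form-properties}(i), of the symmetric part of $\chi_f|_U$ with $\tfrac12\beta|_U$ (which is where positive definiteness of $U$ enters). If a purely algebraic argument is preferred over the deformation one, an alternative is to write $\det(S+A) = \det(S)\,\det\!\big(I + S^{-1/2} A S^{-1/2}\big)$ and observe that $S^{-1/2} A S^{-1/2}$ is antisymmetric, hence has eigenvalues occurring in conjugate pairs $\pm i\lambda$ (together with possible zeros), so that $\det\!\big(I + S^{-1/2} A S^{-1/2}\big) = \prod (1 + \lambda^2) > 0$.
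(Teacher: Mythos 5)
Your proof is correct, but it takes a genuinely different route from the paper's. The paper argues purely algebraically: it applies \Cref{lemma:triangular-basis} to $\chi_f|_U$ (which is non-degenerate and non-alternating because $\chi_f(u,u) = Q(u) > 0$ on $U$) to produce a basis in which the Gram matrix of $\chi_f|_U$ is lower triangular, and then notes that each diagonal entry is $Q(e_i) > 0$, so the determinant is a product of positive numbers. Your proof instead splits the Gram matrix into its symmetric part $S = \frac12\beta|_U$ (via \Cref{lemma:wall-form-properties}(i)) and antisymmetric part $A$, and invokes the real-matrix fact that $\det(S+A)>0$ when $S$ is symmetric positive definite and $A$ is antisymmetric, proven either by deforming $S+tA$ along $t\in[0,1]$ (the determinant never vanishes since $x^\top(S+tA)x = x^\top S x > 0$, so IVT keeps its sign) or by conjugating to $I+S^{-1/2}AS^{-1/2}$ and pairing conjugate eigenvalues $\pm i\lambda$. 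Both of your arguments are sound, and the identification of $S$ with $\frac12\beta|_U$ is exactly right. The trade-off: your argument buys a clean reduction to a self-contained linear-algebra fact over $\R$, but it leans on topology (connectedness and IVT) or on eigenvalues over $\C$, so it is specific to the real case; the paper's triangular-basis argument works verbatim over any ordered field, which is the level of generality the surrounding sections aim for (the lemma happens to sit in the $\R$-specific hyperbolic section, so there is no actual loss here, but the paper's proof recycles machinery already built for the general theory).
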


\begin{proof}
  The restriction $\chi_f|_U$ is non-degenerate, because $\chi(u, u) =
  Q(u) > 0$ for all $u \in U$.  Applying \Cref{lemma:triangular-basis}
  to $\chi_f|_U$, we obtain a basis $e_1, \dotsc, e_m$ of $U$ such
  that $\chi_f(e_i, e_i) \neq 0$ for all $i$, and $\chi(e_i, e_j) = 0$
  for $i < j$.  Additionally, we have $\chi_f(e_i, e_i) = Q(e_i) > 0$
  for all $i$.  Therefore, $\det(\chi_f|_U) > 0$.
\end{proof}

\begin{theorem}[Elliptic intervals]
  Let $f \in O_+(V)$ be an elliptic isometry.  Then the interval
  $[\id, f]$ is isomorphic to the poset of all linear subspaces of
  $\Mov(f)$.  In particular, the isomorphism type of $[\id, f]$ only
  depends on the dimension of $\Mov(f)$, and not on the Wall form
  $\chi_f$.
\end{theorem}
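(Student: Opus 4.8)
The plan is to obtain this statement as an immediate consequence of \Cref{thm:hyperbolic-space-intervals}, using the characterization of elliptic isometries from \Cref{lemma:hyperbolic-isometries-fix-mov} together with the positivity statement of \Cref{lemma:wall-form-positive-definite}. The point is that ellipticity forces $\Mov(f)$ to be positive definite, which in turn makes the condition $\det(\chi_f|_U) > 0$ appearing in \Cref{thm:hyperbolic-space-intervals} automatically hold for every subspace $U \subseteq \Mov(f)$.

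First I would recall from \Cref{lemma:hyperbolic-isometries-fix-mov} that $f$ is elliptic if and only if $\Mov(f)$ is positive definite. Hence every linear subspace $U \subseteq \Mov(f)$ is itself positive definite. By \Cref{lemma:wall-form-positive-definite}, this yields $\det(\chi_f|_U) > 0$ for every such $U$. Therefore the poset of linear subspaces $U \subseteq \Mov(f)$ with $\det(\chi_f|_U) > 0$, which by \Cref{thm:hyperbolic-space-intervals} is isomorphic to the interval $[\id, f]$ in $O_+(V)$, is simply the poset of \emph{all} linear subspaces of $\Mov(f)$. This gives the first claim.

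For the final sentence, I would observe that the lattice of all linear subspaces of a real vector space depends only on its dimension: any linear isomorphism $\Mov(f) \xrightarrow{\sim} \R^{\dim \Mov(f)}$ induces a poset isomorphism between the corresponding subspace lattices. Hence the isomorphism type of $[\id, f]$ depends only on $\dim \Mov(f)$, and not on the Wall form $\chi_f$. I do not expect any real obstacle here — all the substantive work has been done in the preceding lemmas and in \Cref{thm:hyperbolic-space-intervals} — the only thing to check carefully is that the constraint in \Cref{thm:hyperbolic-space-intervals} becomes vacuous, which is exactly the content of \Cref{lemma:wall-form-positive-definite} applied to positive definite subspaces.
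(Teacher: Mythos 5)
Your proposal is correct and matches the paper's (very terse) proof, which simply cites \Cref{thm:hyperbolic-space-intervals} and \Cref{lemma:wall-form-positive-definite}; you have merely spelled out the intermediate step of invoking \Cref{lemma:hyperbolic-isometries-fix-mov} to see that $\Mov(f)$ is positive definite. No gaps or deviations.
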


\begin{proof}
  This follows immediately from \Cref{thm:hyperbolic-space-intervals}
  and \Cref{lemma:wall-form-positive-definite}.
\end{proof}

The description of \Cref{thm:hyperbolic-space-intervals} can be
simplified also for parabolic intervals.

\begin{lemma}\label{lemma:wall-form-restriction-degenerate}
  Let $f \in O_+(V)$ be a positive isometry, and $U \subseteq \Mov(f)$
  a subspace.  The restriction $\chi_f|_U$ is degenerate if and only
  if there is a singular vector $v \in \Mov(f) \setminus \{0\}$ such
  that $\< v \> \subseteq U \subseteq \< v \>^\triangleright$.
  Note that $\< v \>^\triangleright = \< w \>^\perp$ where $w$ is any vector such that $w - f(w) = v$.
\end{lemma}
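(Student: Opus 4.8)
The plan is to peel the statement down to a purely linear-algebraic fact about the bilinear form $\chi_f|_U$ and then use the identity $\chi_f(u,u)=Q(u)$ from \Cref{thm:wall-form} to promote an isotropic radical vector to a singular one. Recall first that on a finite-dimensional space the left and right radicals of a bilinear form have the same dimension, so $\chi_f|_U$ is degenerate exactly when there is a non-zero $v\in U$ with $\chi_f(v,u)=0$ for every $u\in U$; in terms of the notation of \Cref{def:left-right-orthogonal-complement}, this says precisely that $v\in U$ and $U\subseteq\langle v\rangle^\triangleright$.

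For the ``if'' direction I would start from the singular vector $v\in\Mov(f)\setminus\{0\}$ with $\langle v\rangle\subseteq U\subseteq\langle v\rangle^\triangleright$: then $v\in U$, and the inclusion $U\subseteq\langle v\rangle^\triangleright$ says exactly that $\chi_f(v,u)=0$ for all $u\in U$, so $v$ is a non-zero vector in the radical of $\chi_f|_U$ and the restriction is degenerate. (Singularity of $v$ is not even used here.)

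For the ``only if'' direction, assuming $\chi_f|_U$ is degenerate, I would pick a non-zero $v\in U$ in its radical, so that $\chi_f(v,u)=0$ for all $u\in U$. Specializing $u=v$ gives $Q(v)=\chi_f(v,v)=0$ by \Cref{thm:wall-form}, so $v$ is singular; it is non-zero and lies in $\Mov(f)$ because $U\subseteq\Mov(f)$. Finally $v\in U$ gives $\langle v\rangle\subseteq U$, and ``$\chi_f(v,u)=0$ for all $u\in U$'' is exactly the condition $U\subseteq\langle v\rangle^\triangleright$, which completes the equivalence.

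It remains to justify the closing remark. Writing $v=w-f(w)$ (note $w\neq 0$), \Cref{def:wall-form} gives $\chi_f(v,u)=\beta(w,u)$ for every $u\in\Mov(f)$, hence $\langle v\rangle^\triangleright=\{u\in\Mov(f):\beta(w,u)=0\}=\langle w\rangle^\perp\cap\Mov(f)$; since every subspace $U$ under consideration lies in $\Mov(f)$, the condition $U\subseteq\langle v\rangle^\triangleright$ is the same as $U\subseteq\langle w\rangle^\perp$. The whole argument is essentially bookkeeping: the only non-formal ingredient is the observation that a radical vector of $\chi_f|_U$ is automatically singular, which is immediate from the Wall form identity, so I do not anticipate any genuine obstacle.
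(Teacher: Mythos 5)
Your proof is correct and follows essentially the same route as the paper: characterize degeneracy of $\chi_f|_U$ via a non-zero radical vector $v \in U$ with $U \subseteq \langle v\rangle^\triangleright$, use the identity $\chi_f(v,v)=Q(v)$ to deduce that such a $v$ is automatically singular, and unwind the definition of $\chi_f$ to relate $\langle v\rangle^\triangleright$ to $\langle w\rangle^\perp$. Your more careful version of the closing remark ($\langle v\rangle^\triangleright=\langle w\rangle^\perp\cap\Mov(f)$ rather than the paper's $\langle v\rangle^\triangleright=\langle w\rangle^\perp$) is actually the more precise statement, since $\langle v\rangle^\triangleright$ lives inside $\Mov(f)$ while $\langle w\rangle^\perp$ is a hyperplane of $V$; the two conditions agree on subspaces $U\subseteq\Mov(f)$, which is all that is needed.
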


\begin{proof}
  The restriction $\chi_f|_U$ is degenerate if and only if there is a
  non-zero vector $v \in U$ such that $\chi_f(v, u) = 0$ for all $u
  \in U$, or equivalently $\< v \> \subseteq U \subseteq \< v
  \>^\triangleright$.  Since $\chi_f(v, v) = Q(v)$, the containment
  $\< v \> \subseteq \< v \>^\triangleright$ holds if and only if $v$
  is singular.
  Finally, by definition of $\chi_f$, we have $\chi_f(v, u) = \beta(w, u)$ for all $u \in U$, and therefore $\< v \>^\triangleright = \< w \>^\perp$.
\end{proof}

\begin{theorem}[Parabolic intervals]\label{thm:parabolic-intervals}
  Let $f \in O_+(V)$ be a parabolic isometry which pointwise fixes
  the singular line $\< v \>$.  Then the interval $[\id, f]$ is
  isomorphic to the poset of linear subspaces $U \subseteq \Mov(f)$
  that do not satisfy $\< v \> \subseteq U \subseteq \< v
  \>^\triangleright$.  In particular, the isomorphism type of $[\id,
    f]$ only depends on the dimension of $\Mov(f)$, and not on the
  Wall form $\chi_f$.
\end{theorem}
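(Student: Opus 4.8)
The plan is to deduce \Cref{thm:parabolic-intervals} from \Cref{thm:hyperbolic-space-intervals}, which already presents $[\id,f]$ (inside $O_+(V)$) as the poset of subspaces $U \subseteq \Mov(f)$ with $\det(\chi_f|_U) > 0$. So it suffices to prove that, when $f$ is parabolic and pointwise fixes the singular line $\langle v\rangle$, a subspace $U \subseteq \Mov(f)$ satisfies $\det(\chi_f|_U) > 0$ if and only if it does \emph{not} satisfy $\langle v\rangle \subseteq U \subseteq \langle v\rangle^\triangleright$, the complement being taken inside $\Mov(f)$ with respect to $\chi_f$. The ``in particular'' clause is then immediate: the data of a hyperplane of $\Mov(f)$ together with a line contained in it is unique up to a linear automorphism, so the described poset depends only on $m := \dim\Mov(f)$.

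First I would record the geometry of $\Mov(f)$. By \Cref{lemma:hyperbolic-isometries-fix-mov}, $\Mov(f)$ is positive semi-definite but not positive definite, so its radical $\Mov(f)\cap\Mov(f)^\perp = \Mov(f)\cap\Fix(f)$ is a singular line; since $\Fix(f)$ is positive semi-definite (as $f$ is parabolic), its unique singular line is precisely this radical, which therefore equals $\langle v\rangle$. Consequently $v \in \Mov(f)$, and the nonzero singular vectors of $\Mov(f)$ are exactly the scalar multiples of $v$ (a positive semi-definite quadratic space has isotropic set equal to its radical). Moreover $\chi_f(v,v) = Q(v) = 0$, so the hyperplane $\langle v\rangle^\triangleright$ of $\Mov(f)$ contains $v$. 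With this understood, \Cref{lemma:wall-form-restriction-degenerate} says that $\chi_f|_U$ is degenerate if and only if $\langle v'\rangle \subseteq U \subseteq \langle v'\rangle^\triangleright$ for some nonzero singular $v' \in \Mov(f)$ — and since every such $v'$ spans $\langle v\rangle$, this happens if and only if $\langle v\rangle \subseteq U \subseteq \langle v\rangle^\triangleright$.

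It then remains only to show that $\det(\chi_f|_U) > 0$ whenever $\chi_f|_U$ is non-degenerate (the reverse implication being trivial), which combined with the previous paragraph finishes the equivalence. If $U = \{0\}$ this is clear. Otherwise $U \not\subseteq \langle v\rangle$, because $\chi_f|_{\langle v\rangle}$ is the zero form and hence degenerate; so $U$ contains a vector $u$ lying outside the radical of $\Mov(f)$, whence $\chi_f(u,u) = Q(u) > 0$ and $\chi_f|_U$ is not alternating. Applying \Cref{lemma:triangular-basis} to $\chi_f|_U$ yields a basis $e_1,\dots,e_k$ of $U$ with $\chi_f(e_i,e_j)=0$ for $i<j$ and $\chi_f(e_i,e_i)\neq 0$ for all $i$; since $\chi_f(e_i,e_i) = Q(e_i) \geq 0$ by positive semi-definiteness, in fact $\chi_f(e_i,e_i) > 0$, so $\det(\chi_f|_U) = \prod_i \chi_f(e_i,e_i) > 0$. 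Putting the two equivalences together with \Cref{thm:hyperbolic-space-intervals} gives the claimed poset isomorphism, and the final sentence follows as noted above. The only genuinely delicate point is the identification of the radical of $\Mov(f)$ with the prescribed fixed line $\langle v\rangle$, so that the degeneracy criterion collapses to the single excluded chain $\langle v\rangle \subseteq U \subseteq \langle v\rangle^\triangleright$; everything downstream is bookkeeping with \Cref{lemma:triangular-basis} and the identity $\chi_f(u,u)=Q(u)$.
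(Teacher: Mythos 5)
Your proof is correct and takes essentially the same route as the paper: both deduce the statement from \Cref{thm:hyperbolic-space-intervals}, use \Cref{lemma:wall-form-restriction-degenerate} for the degeneracy criterion, and then argue positivity of the determinant via \Cref{lemma:triangular-basis} combined with the positive semi-definiteness of $\Mov(f)$. The only cosmetic difference is that you handle non-degenerate $U$ uniformly (extracting a non-singular vector $u\in U$ with $Q(u)>0$ directly), whereas the paper splits into the cases $\<v\>\nsubseteq U$ (invoking \Cref{lemma:wall-form-positive-definite}) and $\<v\>\subseteq U\nsubseteq\<v\>^\triangleright$; you also spell out why the radical of $\Mov(f)$ coincides with the given line $\<v\>$, which the paper asserts without comment.
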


\begin{proof}
  Let $U \subseteq \Mov(f)$ be a subspace.  If $\< v \> \nsubseteq U$,
  then $U$ is positive definite and thus $\det(\chi_f|_U) > 0$ by
  \Cref{lemma:wall-form-positive-definite}.  Since $\< v \>$ is the
  only singular line in $\Mov(f)$, the restriction $\chi_f|_U$ is
  degenerate if and only if $\<v \> \subseteq U \subseteq \< v
  \>^\triangleright$ by
  \Cref{lemma:wall-form-restriction-degenerate}.  Finally, if $\< v \>
  \subseteq U \nsubseteq \< v \>^\triangleright$, then \Cref{lemma:triangular-basis} yields a basis
  $e_1, \dotsc, e_m$ of $U$ such that $\chi_f(e_i, e_i) \neq 0$ for
  all $i$ and $\chi_f(e_i, e_j) = 0$ for $i < j$.  Since $f$ is
  parabolic, $\Mov(f)$ is positive semi-definite by
  \Cref{lemma:hyperbolic-isometries-fix-mov} and therefore $\chi(e_i,
  e_i) = Q(e_i) > 0$ for all $i$. Thus $\det(\chi_f|_U) > 0$ also in
  this case.  We conclude by applying
  \Cref{thm:hyperbolic-space-intervals}.
\end{proof}

The subgroup of $O_+(V)$ that fixes a singular line $\< v \>$ is
isomorphic to the isometry group of the affine Euclidean space
$\R^n$. This is easily seen in the \emph{half-space model} of the
hyperbolic space (see \cite[Section 12]{cannon1997hyperbolic}).  In
particular, parabolic intervals are isomorphic to intervals in the
group of affine Euclidean isometries, which have been explicitly
described in \cite{brady2015factoring}.
Our description is more compact than the one of \cite{brady2015factoring}, where the elliptic and the parabolic portions of an interval are described separately.

The results of this section leave open the following natural question: if $f \in O_+(V)$ is a hyperbolic isometry, does the isomorphism type of $[\id, f]$ depend only on the dimension of $\Mov(f)$?

\bibliographystyle{amsalpha-abbr}
\bibliography{bibliography}

\end{document}